\numberwithin{equation}{section}
\DeclareRobustCommand{\rchi}{{\mathpalette\irchi\relax}}
\newcommand{\irchi}[2]{\raisebox{\depth}{$#1\chi$}} 
\DeclareMathOperator*{\esssup}{ess\,sup}
\def\NAT@def@citea{\def\@citea{\NAT@separator}}
\theoremstyle{plain}
\newtheorem{theorem}{Theorem}[section]
\newtheorem{lemma}[theorem]{Lemma}
\newtheorem{corollary}[theorem]{Corollary}
\newtheorem{proposition}[theorem]{Proposition}
\theoremstyle{definition}
\newtheorem{definition}[theorem]{Definition}
\theoremstyle{remark}
\newtheorem{remark}{Remark}
\newcommand{\R}{\mathbb{R}}
\newcommand{\Z}{\mathbb{Z}}
\newcommand{\N}{\mathbb{N}}
\newcommand{\Oe}{\Omega_\epsilon}
\newcommand\shorttitle{Homogenization of nonlinear drift-diffusion in porous medium}
\newcommand\authors{A. Bhattacharya, M. Gahn and M. Neuss-Radu}
	\ifodd\value{page}
\authors
\shorttitle
\begin{document}
	
\title{\Large Homogenization of a nonlinear drift-diffusion system for multiple charged species in a porous medium}

\author{Apratim Bhattacharya$^{*}$, Markus Gahn$^{**}$ and Maria Neuss-Radu$^{*}$ 
}

\date{}
\maketitle

\begin{abstract}
We consider a nonlinear drift-diffusion system for multiple charged species in a porous medium in 2D and 3D with periodic microstructure. The system consists of a transport equation for the concentration of the species and Poisson's equation for the electric potential. The diffusion terms depend nonlinearly on the concentrations. We consider non-homogeneous Neumann boundary condition for the electric potential. The aim is the rigorous derivation of an effective (homogenized) model in the limit when the scale parameter $\epsilon$ tends to zero. This is based on uniform \textit{a priori} estimates for the solutions of the microscopic model. The crucial result is the uniform $L^\infty$-estimate for the concentration in space and time. This result exploits the fact that the system admits a nonnegative energy functional which decreases in time along the solutions of the system. By using weak and strong (two-scale) convergence properties of the microscopic solutions, effective models are derived in the limit $\epsilon \to 0$ for different scalings of the microscopic model.
\end{abstract}

\noindent\textbf{Keywords:} Drift-diffusion model; nonlinear diffusion; multiple charged species; porous media; homogenization; two-scale convergence.

\noindent\textbf{2020 Mathematics Subject Classification:} 35B27, 35K59, 35Q92, 78A35

\let\thefootnote\relax\footnotetext{$^{*}$Department Mathematik, Friedrich-Alexander-Universität Erlangen-Nürnberg, Cauerstr. 11, 91058 Erlangen, Germany, apratim.bhattacharya@fau.de, maria.neuss-radu@math.fau.de
	
	\vspace{.5mm}
	 \ $^{**}$Interdisciplinary Center for Scientific Computing, University of Heidelberg, Im Neuenheimer Feld
	205, 69120 Heidelberg, Germany, markus.gahn@iwr.uni-heidelberg.de}



\section{Introduction}

The aim of this paper is the rigorous homogenization of the nonlinear drift-diffusion model (non-dimensional) \eqref{non_dim_PNP}-\eqref{def_h_p} for a number of  $P \in \N$ charged species with concentrations $c_{i, \epsilon}, 1 \leq i \leq P$, and the electric potential $\phi_\epsilon$ in a periodically perforated domain $\Omega_\epsilon$ representing the fluid (pore) phase of a porous medium (see also Figure \ref{fig_domain_PNP}):
\begin{subequations}\label{non_dim_PNP}
\begin{align}
\partial_{t} c_{i,\epsilon}(t,x) +\nabla \cdot J_{i,\epsilon} (t,x) &=0  && \mbox{ in }  (0,T) \times \Omega_\epsilon, \label{non_dim_NP_eq}
\\
 J_{i,\epsilon} (t,x) \cdot \nu_\epsilon &=0   && \mbox{ on }  (0,T) \times \partial \Omega_\epsilon, \label{non_dim_bc_1}
 \\
 c_{i,\epsilon}(0,x) &= c^{0}_{i} (x)  && \mbox{ in } \Oe. \label{non_dim_ic}
\\
  -\epsilon^\alpha   \Delta \phi_\epsilon (t,x) &=  \sum_{i=1}^{P}  z_i c_{i,\epsilon}  (t,x)   && \mbox{ in }  (0,T) \times \Omega_\epsilon, \label{non_dim_poisson_eq}
\\
 \epsilon^{\alpha} \nabla \phi_\epsilon (t,x) \cdot \nu_\epsilon &  = \xi_\epsilon (x)    && \mbox{ on } (0,T) \times \partial \Omega_\epsilon \label{non_dim_bc_2},
 \end{align}
 \end{subequations}
where the total flux $J_{i,\epsilon}$ of the $i$-th charged species is given by
\begin{equation}\label{total_flux}
J_{i,\epsilon}(t, x) = - \left(  D_{i} \nabla h_p(c_{i,\epsilon})+ \epsilon^\beta  D_{i}z_i c_{i,\epsilon} (t,x) \nabla \phi_\epsilon (t,x) \right).
\end{equation}
Here $(0,T)$ is a time interval, whereas $D_i >0$ and $z_i \in \Z$ denote the (scaled) diffusivity and charge number of the $i$-th species, respectively. The (scaled) permittivity of the medium and the (scaled) mobility of the $i$-th charged species are given by $\epsilon^\alpha$ and $D_i \epsilon^\beta$, respectively, where $\alpha, \beta \in \mathbb{R}$ with $\alpha \leq \beta$. The scale parameter $\epsilon >0$ describes the length of the period of the porous microstructure and it  is also proportional to the radius of the pores. $\nu_\epsilon$ represents the outward unit normal vector to the boundary $\partial \Omega_\epsilon$. The function $h_p$ is defined by
\begin{equation}\label{def_h_p}
h_p(r)= r+ \eta r^p, \ \ r \geq 0, \ \eta \in (0,\infty), \  p \in [4, \infty).
\end{equation}
Equation (\ref{non_dim_NP_eq}) models the transport of the charged species due to nonlinear diffusion and electromigration in a domain with an impermeable boundary modeled by the no-flux boundary condition (\ref{non_dim_bc_1}) and with initial concentrations  given in (\ref{non_dim_ic}). The electric potential is induced by the charges of the species and is given as the solution  of Poisson's equation (\ref{non_dim_poisson_eq}) subject to the  non-homogeneous Neumann boundary condition (\ref{non_dim_bc_2}). The right hand side in \eqref{non_dim_poisson_eq} represents the (scaled) charge density within the fluid phase of the porous medium, while (\ref{non_dim_bc_2}) models a charged boundary and $\xi_\epsilon$ represents the (scaled) surface charge density. The parameters $\alpha, \beta \in \mathbb{R}$ (obtained from a non-dimensionalization procedure, see, e.g., \cite{RayThesis}, Section 2.1.3) allow to consider different scalings in our microscopic model corresponding to different settings and applications. 
\begin{figure}%
	\centering
	{{\includegraphics[width=5.5cm]{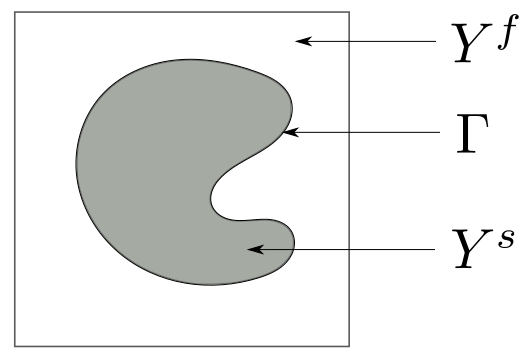} }}%
	\qquad 
	{{\includegraphics[width=6cm]{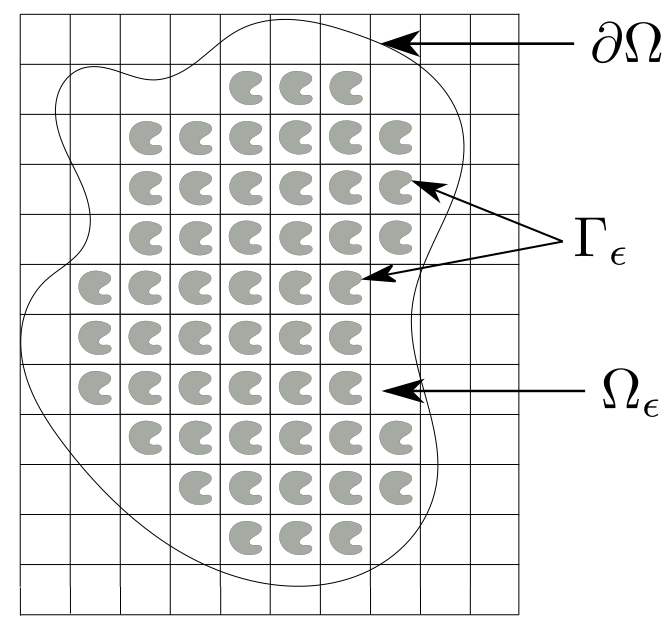} }}%
	\caption{The standard cell $Y= Y^f \cup \overline{Y^s}$ (left) and the porous medium $\Omega$ with the fluid (pore) part $\Omega_\epsilon$  (right). \label{fig_domain_PNP}}
\end{figure}

Drift-diffusion systems of the form \eqref{non_dim_PNP}-\eqref{total_flux} arise, e.g., in the mathematical modeling of semiconductors and of transport of charged particles (ions) in solutions (electrophoresis). In semiconductor modeling, see, e.g., \cite{Gaj86, Gli94, Mar90}, the case of two species is relevant (the electrons with valence $-1$ and the holes with valence $+1$) while in electrophoresis multiple types of charged particles (like, e.g., ions) with different charge numbers are transported in solution under the influence of an electric field, see, e.g., \cite{Dey79, Keen98}. If we consider, e.g., the case of ion channels located within the membrane of cells and intracellular organelles, it is known that there is a permanent charge on the atoms of the channel protein which can be measured by, for example, x-ray crystallography, see, e.g., \cite{Chen93}. This permanent charge which in our model is described by the surface charge density $\xi_\epsilon$ has a significant role in determining channels' permeation properties. In these applications, the function $h_p$ in \eqref{total_flux} can be linear or nonlinear. The linear case represents the classical drift-diffusion  (Poisson-Nernst-Planck) system. For a discussion about different (nonlinear) shapes of $h_p$, see, e.g., the introduction in \cite{Jun97}. The particular model with $h_p$ given by  \eqref{def_h_p} plays an important role in the approximation of the models used in applications. For example, in \cite{Both14} (inspired by contributions from  \cite{Gaj96}) this model is used as a regularization of the classical PNP system in order to show existence of the latter for multiple species in space dimension three and higher. Furthermore, in \cite{Jun97}, a similar model is used to approximate a drift-diffusion system with a (possible) degenerate nonlinear diffusion term by a nondegenerate system. 

In many applications, e.g., from geosciences, biology or biomedicine, electrophoretic processes take place in porous media. Due to the complex microstructure of the medium numerical simulations of microscopic models at the fine (pore) scale are very expensive. Therefore, effective (homogenized) approximations of the solutions, obtained in the scale limit $\epsilon \to 0$, are highly demanded. Homogenization results for drift-diffusion models in porous media usually deal with the Poisson-Nernst-Planck (PNP) system or with models consisting of the PNP system coupled with the Stokes system (SPNP). Formal upscaling of PNP system using formal asymptotic expansions is given, e.g., in \cite{Aur94, Moy06}. Rigorous homogenization results have been obtained for the SPNP system in the case of two charged species with opposite valences, e.g., in \cite{Schmu11, Ray12}. Furthermore, in \cite{All10} a stationary and linearized SPNP for multiple charged species was homogenized. In the recent paper \cite{Kov21}, a generalized PNP problem in a two-phase medium with transmission conditions at the microscopic interface has been homogenized. Here multiple charged species were considered, however under the constraint of \textit{total mass balance}. Corrector results related to the model from \cite{Ray12} were considered in \cite{Kho19}. Homogenization results for drift-diffusion system for multiple charged species without additional constraints (e.g., linearization, \textit{total mass balance}) are not available in the literature so far. This might be related to the fact that existence results in dimension three for such problems are only available in very weak function spaces, see, e.g., \cite{Both14}.  In contrast, problem \eqref{non_dim_PNP}-\eqref{def_h_p} has (for fixed values of the parameters $\epsilon>0$ and $\eta>0$) weak solutions with good regularity properties, especially for the potential, see Proposition \ref{thm_exist} below for details. Thus, this problem is more suitable (than the classical PNP problem) for the investigation of the behavior of the solutions in the limit $\epsilon \to 0$, and for the derivation of an effective (macroscopic) approximation in case of multiple charged species without imposing further constraints. 

From the point of view of multiscale analysis and homogenization the problem \eqref{non_dim_PNP}-\eqref{def_h_p} (for fixed values of $\eta>0$) is of independent interest due to the nonlinearity in the diffusion term and the strong nonlinear coupling via the drift-term. In the literature there are few contributions dealing with the homogenization of quasi-linear problems and nonlinear diffusion, we mention here, e.g., \cite{Luc96, Lis20, Gah21}. In \cite{Luc96} the rigorous upscaling of a two phase flow in a perforated domain was performed while in \cite{Lis20} fluid flow in an unsaturated porous
medium containing a fracture was upscaled. In \cite{Gah21} a reaction-diffusion problem with nonlinear diffusion was homogenized in a domain consisting of two bulk regions separated by a thin layer with periodic microstructure. Further, problems including monotone operators were treated in \cite{All92} for the stationary case and in \cite{ClarkShowalter1999} for nonstationary problems. 
As usually in the homogenization of nonlinear problems, the main challenge of our study is to derive \textit{a priori} estimates of the solutions uniformly with respect to the scale parameter $\epsilon$, which allow to pass to the limit $\epsilon \to 0$, especially in the nonlinear terms. In particular, an $L^\infty$-estimate in time and space of the concentration vector $(c_{i, \epsilon})_{1\leq i\leq P}$
is needed. To obtain this estimate, we make use of an energy functional associated to the system, which allows us to estimate the $L^\infty L^p$-norm of the concentrations. The energy functional is inspired from \cite{Both14}, where the existence of solutions of problem  \eqref{non_dim_PNP}-\eqref{def_h_p} with Robin-type boundary conditions for the potential was shown.  The estimate of the concentration vector then allows to prove an $L^\infty H^{1}$-estimate of the potential $\phi_\epsilon$ and eventually, to obtain the $L^\infty L^\infty$-estimate for the concentrations by using a classical result from \cite{Lad68}. A key point in our proof is to avoid the use of higher (than $H^{1}$) norms for the potential. 
Based on the \textit{a priori} estimates effective (homogenized) problems are derived. It turns out that different results are obtained for different values of the parameters $\alpha$ and $\beta$. Whereas for $\alpha = \beta$ a drift-diffusion problem with homogenized coefficients (similar to the microscopic problem) is obtained, for $\alpha < \beta$ the homogenized problem reduces to a system of diffusion equations (with homogenized nonlinear diffusion) for the species concentrations, one way coupled to the homogenized Poisson's equation.  To our knowledge, the result obtained in this paper is the first one providing the rigorous homogenization of a three dimensional (nonlinear) electro-diffusion system for multiple charged species (avoiding further restrictions such as above). 

This paper is organized as follows. In Section \ref{Microscopic_model}, the microscopic model is introduced and existence of solutions is proved. In Section \ref{Sect_unif_est}, estimates for the microscopic solutions uniformly with respect to $\epsilon$ are derived. These are the basis for the (two-scale) convergence results for the microscopic solutions proved in Section \ref{Deriv_macro_model}. Also in this section the homogenized drift-diffusion model is derived.  The paper is concluded with a discussion and outlook in Section \ref{Discussion_and_outlook} and the appendix consisting of auxiliary results.


\section{The microscopic model}
\label{Microscopic_model}
	
We consider a porous medium occupying a bounded and connected domain $\Omega\subset \mathbb{R}^n, n=2,3$,  with $\partial \Omega$ of class $C^3$. The medium has a periodic microstructure, generated with the help of the scaled standard periodicity cell $Y=(0,1)^n$ which consists of a solid part $ Y^s$ and a fluid or pore part $ Y^f$. We assume that $ Y^s$ is an open set such that $\overline{Y^s}\subset Y$, and that the boundary $\Gamma := \partial Y^s$ is of class $C^3$. Furthermore, let $Y^f = Y\setminus \overline{Y^s}$, see also Figure 1 (left). For $k\in \mathbb{Z}^n$, let $Y_k := Y +k$ and $\Gamma_k:= \Gamma + k$. Furthermore, for $j=f, s$, set $Y_k^j:= Y^j + k$.  

For a given (small) scale parameter $\epsilon>0$, let
$
I_\epsilon = \left\{k \in \mathbb{Z}^n \left \vert \epsilon Y_k \subset \Omega \right. \right\}.
$
We define the microscopic domain $\Omega_{\epsilon}$ representing the pore part of the porous medium by
$$
\Omega_\epsilon = \Omega \setminus \bigcup_{k \in I_\epsilon} \epsilon \overline { Y_k^s},
$$
see also Figure 1 (right). We remark that the boundary of $ \Omega_\epsilon$ consists of two disjoint parts 
$$
\partial \Omega_\epsilon = \Gamma_\epsilon \cup \partial \Omega,
$$
where 
$
\Gamma_\epsilon := \bigcup_{k \in I_\epsilon} \epsilon \Gamma_k 
$
denotes the boundary of the microscopic solid grains. We also note that the domain $\Omega_\epsilon$ is connected with boundary $\partial \Omega_\epsilon$ of class $C^3$. 

The aim of the paper is the rigorous homogenization of the nonlinear drift-diffusion model \eqref{non_dim_PNP}-\eqref{def_h_p}, i.e., the derivation of a macroscopic model in the scale limit $\epsilon \to 0$. 


\subsection{Assumptions on the data}
\begin{itemize}[noitemsep]
\item[(A1)] For the diffusion coefficients we assume $D_i>0,  1 \leq i \leq P$. Furthermore, let $T>0$ be a fixed time point.
\item[(A2)] The surface charge density is given by
\begin{flalign*}
\xi_\epsilon(x)=
\begin{cases}
\epsilon \xi_1 (x, \frac{x}{\epsilon}) &\text{ if $x\in \Gamma_\epsilon$,} \\
\xi_2(x) &\text{ if $x\in \partial\Omega$,}
\end{cases}
\end{flalign*}
where $\xi_1 \in C^2(\overline{\Omega} \times \overline{\Gamma})$, $\xi_1(x,y)$ periodically extended with respect to $y$ with period $Y$, and $\xi_2 \in C^2(\partial \Omega)$. Let us denote
\begin{equation*}
	\xi^* := \max  \left \{\displaystyle \max_{(x,y) \in (\overline{\Omega} \times \overline{\Gamma})}| \xi_1 (x,y)|, \displaystyle  \max_{x \in \partial \Omega} |\xi_2 (x)| \right \}.  
		\end{equation*}	
\item[(A3)]	 For the initial concentrations we assume  $c^0_i \in C^2(\overline{\Omega})$ with $c^0_i \geq 0$ for $i=1, \ldots, P$.
\item [(A4)] We assume the following compatibility condition:
\begin{equation}\label{assum_data}
\int_{\Omega_\epsilon} \sum_{i=1}^{P} z_i c_i^0 (x) \,dx + \int_{\partial \Omega_\epsilon} \xi_\epsilon(x)\,dS(x) =0.
\end{equation}
\end{itemize}

\begin{remark}
Let us mention that the assumptions on the data and on the domain have to guarantee existence of a solution, but also they have to allow the passage to the homogenization limit. Correspondingly, we have to assume relatively hight regularity of the domain $\Omega_\epsilon$ and on the given charge density $\xi_\epsilon$ as well as on the initial concentration vector $c^0$, in order to show existence of microscopic solutions. The compatibility condition \eqref{assum_data} is also required for solvability of the microscopic model. To be able to perform the homogenization process, the periodicity assumption on the microstructure of the porous medium is fundamental. Taking a charge distribution $\xi_1(x, \frac{x}{\epsilon} )$ which depends on both a microscopic and a macroscopic variable is rather physical. The dependence on the second (microscopic) variable allows the presence of periodically repeating patterns in the charge distribution on the pores' boundaries, while the  dependence on the first (macroscopic) variable allows slight variations in these patterns between neighboring pores. 
\end{remark}


\subsection{Variational formulation of the microscopic problem}
The variational formulation of the problem \eqref{non_dim_PNP}-\eqref{def_h_p} is given as follows. Find non-negative functions $c_{i,\epsilon} \in L^\infty ((0,T)\times \Omega_\epsilon) \cap L^2(0,T; H^1(\Omega_\epsilon))$ with $ \partial_{t} c_{i,\epsilon} \in L^2(0,T; H^1(\Omega_\epsilon)^\prime)$ and $\phi_\epsilon \in L^\infty (0,T; W^{2,6} (\Omega_\epsilon)) $  with $\int _{\Omega_\epsilon} \phi_\epsilon (t,x) \,dx =0$ for all $t \in [0,T]$, satisfying 
\begin{eqnarray}\label{Exist_1}
 <\partial_{t} c_{i,\epsilon} ,\psi>_{H^1(\Omega_\epsilon)^\prime, H^1(\Omega_\epsilon) }+   \int_{\Omega_\epsilon} ( D_{i} \nabla h_p (c_{i,\epsilon}) + \epsilon^\beta D_{i} z_i c_{i,\epsilon} \nabla \phi_\epsilon) \nabla \psi  \,dx= 0,
 \end{eqnarray}
for all $\psi \in H^1(\Omega_\epsilon)$ and almost every $t \in (0,T)$, together with the initial condition
\begin{equation}\label{Exist_2}
c_{i,\epsilon}(0)= c_i^0 \quad \mbox{ in } L^2(\Omega_\epsilon),
\end{equation}
and 
\begin{equation}\label{weak_Poisson}
\epsilon ^\alpha \int_{\Omega_\epsilon} \nabla \phi_\epsilon (t) \nabla \upsilon \,dx = \int_{\Omega_\epsilon} \sum_{i=1}^{P} z_i c_{i,\epsilon}(t) \upsilon \,dx + \int_{\partial \Omega_\epsilon } \xi_\epsilon \upsilon\,dS(x),
 \end{equation}
for all  $\upsilon \in H^1(\Omega_\epsilon)$, and all $t \in [0,T]$.
\begin{remark}\label{Remark_Poisson}
We emphasize that due to the regularity properties of the variational solution, the Poisson's problem for the electric potential $\phi_\epsilon$ holds even pointwise almost everywhere in $x$ and $t$. 
\end{remark}
\begin{remark}
In order to keep the the notation as clear as possible, we skip the parameters $\alpha, \beta, \eta$ and $p$ in the labeling of the solution $(c_{i, \epsilon}, \phi_\epsilon)$.
\end{remark}


\subsection{Existence for the microscopic model}
In this section we prove the existence of solutions for the microscopic model \eqref{non_dim_PNP}-\eqref{def_h_p}. The proof follows from similar arguments as in \cite[Lemma 5]{Both14}, where a similar model was considered, however, with Poisson's equation for the potential subject to Robin-type boundary condition (instead of purely Neumann condition used in our model). Therefore, we mainly highlight the new arguments needed to prove the existence result for our setting. Note that in the proof we use   energy estimates from Proposition \ref{Thm: V_est} in the next section.

\begin{proposition}\label{thm_exist}
Suppose the assumptions $(A1)-(A4)$ are satisfied. Then there exists a solution $(c_{i,\epsilon}, \phi_\epsilon),  i \in \{1,...,P\}$ of the variational problem \eqref{Exist_1}-\eqref{weak_Poisson}. If $n=2$, additionally, we have $\phi_\epsilon \in L^\infty(0,T;W^{2,q}(\Omega_\epsilon))$ for any $q \in [1, \infty)$.
\end{proposition}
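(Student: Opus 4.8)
The plan is to construct the solution by a Galerkin-type fixed-point scheme combined with a regularization of the nonlinearity $h_p$, following the strategy of \cite[Lemma 5]{Both14}, and then to remove the regularization using the uniform (in the regularization parameter) estimates provided by the energy functional. Concretely, I would first fix $\epsilon>0$ and introduce, for $\delta>0$, a truncated/regularized version $h_p^\delta$ of $h_p$ that is globally Lipschitz and strictly increasing (for instance $h_p^\delta(r)=r+\eta\min\{r,1/\delta\}^{p-1}r$ or a smooth cut-off), so that the diffusion coefficient $(h_p^\delta)'$ is bounded above and below by positive constants. For this regularized system I would set up a fixed-point map: given $\tilde c=(\tilde c_i)_i$, solve Poisson's equation \eqref{weak_Poisson} for $\phi$ with right-hand side $\sum_i z_i\tilde c_i$ and boundary data $\xi_\epsilon$ — here one uses the compatibility condition (A4) for solvability and the normalization $\int_{\Omega_\epsilon}\phi\,dx=0$ for uniqueness, and elliptic regularity on the $C^3$ domain $\Omega_\epsilon$ gives $\phi\in W^{2,6}(\Omega_\epsilon)$ (resp.\ $W^{2,q}$ for all $q<\infty$ if $n=2$), since the data $\xi_1\in C^2$, $\xi_2\in C^2$ and $\tilde c\in L^\infty$ are smooth enough. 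Then, with this $\phi$ frozen in the drift term, solve the linear uniformly parabolic system \eqref{Exist_1} for $c=(c_i)_i$; the map $\tilde c\mapsto c$ is continuous and compact on a suitable ball in, say, $L^2((0,T)\times\Omega_\epsilon)^P$, so Schauder's fixed point theorem yields a solution $(c_i^\delta,\phi^\delta)$ of the regularized problem.

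Next I would establish nonnegativity of $c_i^\delta$: testing the $i$-th equation with the negative part $(c_i^\delta)^-$ (which is an admissible test function since $h_p^\delta$ is monotone, so $\nabla h_p^\delta(c_i^\delta)\cdot\nabla (c_i^\delta)^- = (h_p^\delta)'(c_i^\delta)|\nabla (c_i^\delta)^-|^2$ with the correct sign) and using a Gronwall argument together with $c_i^0\ge 0$ gives $(c_i^\delta)^-=0$. With nonnegativity in hand, the energy estimates of Proposition~\ref{Thm: V_est} apply to the regularized system (the energy functional is built from $h_p$, but the argument goes through for $h_p^\delta$ with constants uniform in $\delta$, at least for $\delta$ small), yielding bounds on $c_i^\delta$ in $L^\infty(0,T;L^p(\Omega_\epsilon))$ and subsequently, via the argument sketched in the introduction, the uniform $L^\infty((0,T)\times\Omega_\epsilon)$ bound on the concentration vector and the $L^\infty(0,T;H^1(\Omega_\epsilon))$ bound on $\phi^\delta$. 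Once $\|c_i^\delta\|_{L^\infty}\le M$ uniformly in $\delta$, choosing $\delta<1/M$ makes $h_p^\delta(c_i^\delta)=h_p(c_i^\delta)$, so the regularized solution is in fact a solution of the original problem; alternatively one passes $\delta\to 0$ using the standard parabolic compactness (the $L^\infty$ bound gives $\nabla h_p(c_i^\delta)$ bounded in $L^2$, hence $c_i^\delta$ bounded in $L^2(0,T;H^1)$, $\partial_t c_i^\delta$ bounded in $L^2(0,T;H^1(\Omega_\epsilon)')$, and Aubin--Lions gives strong convergence in $L^2((0,T)\times\Omega_\epsilon)$ sufficient to pass to the limit in all terms, including the drift term since $\nabla\phi^\delta$ is bounded in $L^\infty L^2$ — indeed one recovers better, $W^{2,6}$, regularity for the limit $\phi$ a posteriori from the $L^\infty$ bound on $c$).

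Finally, I would note that with $c_i\in L^\infty((0,T)\times\Omega_\epsilon)$ established, Poisson's equation has right-hand side in $L^\infty$ and Neumann data in $C^2$, so elliptic $W^{2,q}$-regularity on the $C^3$ domain $\Omega_\epsilon$ gives $\phi_\epsilon\in L^\infty(0,T;W^{2,6}(\Omega_\epsilon))$ for $n=3$ (any exponent is admissible against an $L^\infty$ datum, but $W^{2,6}$ suffices and embeds into $C^{1,1/2}$ in 3D, which is what the drift term and later the homogenization need) and $\phi_\epsilon\in L^\infty(0,T;W^{2,q}(\Omega_\epsilon))$ for every $q<\infty$ when $n=2$. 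The time-regularity $L^\infty$ in $t$ comes from the fact that the $L^\infty$ bound on $c(t,\cdot)$ is uniform in $t$, so the elliptic estimate is uniform in $t$ as well.

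The main obstacle I anticipate is the circularity between existence and the a~priori estimates: Proposition~\ref{Thm: V_est} is invoked to close the scheme, yet it is (presumably) stated for genuine weak solutions. Handling this cleanly requires checking that the energy/entropy method survives the regularization — i.e.\ that the functional and its dissipation identity hold for $h_p^\delta$ with all constants independent of $\delta$ (so that the subsequent De Giorgi/Moser-type bootstrap of \cite{Lad68} producing the $L^\infty$ bound is also $\delta$-uniform). A secondary delicate point is keeping every estimate uniform only in $\delta$ here (uniformity in $\epsilon$ is deferred to Section~\ref{Sect_unif_est}), and making sure the fixed-point map is well-defined on a space where Poisson's equation already gives enough regularity for the drift term $\epsilon^\beta D_i z_i c_i\nabla\phi$ to make sense in $L^2(0,T;H^1(\Omega_\epsilon)')$ — this is why one wants $\nabla\phi\in L^\infty$, hence $W^{2,q}$ with $q>n$, from the start.
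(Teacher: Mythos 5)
Your proposal takes a genuinely different route from the paper. The paper sets up a fixed-point map $\tau$ on the \emph{potential} ($\tau:L^\infty(0,T;W^{1,\infty}(\Omega_\epsilon))\to L^\infty(0,T;W^{1,\infty}(\Omega_\epsilon))$, $\phi_\epsilon\mapsto\hat\phi_\epsilon$), not on the concentrations, and it does \emph{not} regularize $h_p$: the solvability of the parabolic problem \eqref{Exist_1}--\eqref{Exist_2} for a given $\phi_\epsilon\in L^\infty(0,T;W^{1,\infty})$ is quoted directly from \cite{Both14}. The existence of a fixed point is obtained via Schaefer's theorem, not Schauder's, and the whole weight of the argument is carried by the step showing that the Schaefer set $\{\phi_\epsilon:\phi_\epsilon=\lambda\tau(\phi_\epsilon),\ \lambda\in[0,1]\}$ is bounded in $L^\infty(0,T;W^{1,\infty}(\Omega_\epsilon))$; this is where the energy functional of Proposition~\ref{Thm: V_est} comes in, and notice that this proposition is deliberately stated with a free parameter $\lambda\in(0,1]$ in the Poisson equation \eqref{lambda_Poiss} precisely so that it applies, $\lambda$-uniformly, to the elements $\phi_\epsilon=\lambda\tau(\phi_\epsilon)$ of the Schaefer set. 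Your ``circularity'' worry at the end is therefore correct in spirit, but the paper has already resolved it by building the $\lambda$ into the energy estimate.

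The gap in your proposal is the self-mapping property required by Schauder's theorem. You define the map $\tilde c\mapsto\phi\mapsto c$ on a ball in $L^2((0,T)\times\Omega_\epsilon)^P$ and invoke Schauder, but you never show the ball is mapped into itself. This is not a formality here: the parabolic estimate on $c$ for fixed drift $\nabla\phi$ grows with $\|\nabla\phi\|_{L^\infty}$, which in turn grows with $\|\tilde c\|$, so there is no obvious invariant ball. Moreover, your intended remedy — invoking the energy functional — is not available for the decoupled iterates $(\tilde c,\phi,c)$ with $\tilde c\neq c$, since the energy identity exploits the fact that $\phi$ solves Poisson with source $\sum_i z_i c_i$ (the \emph{output} concentration), not $\sum_i z_i\tilde c_i$; the energy bound only holds at the fixed point. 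To close your scheme you would in effect be forced to pass to a Leray--Schauder/Schaefer-type argument anyway, i.e., replace self-mapping by an a priori bound on fixed points of $\lambda\tau$ — which is exactly the structure the paper uses, made to work by the $\lambda$-parametrized version of Proposition~\ref{Thm: V_est}. The regularization $h_p^\delta$ is harmless but adds a layer that the paper avoids by citing \cite{Both14} for the parabolic solvability; your final elliptic-regularity argument for $\phi_\epsilon\in L^\infty(0,T;W^{2,6})$ (resp.\ $W^{2,q}$ in 2D) is essentially the one in the paper.
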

\begin{proof} The existence is obtained using Schaefer's fixed point theorem.
Consider any $\phi_\epsilon \in L^{\infty} (0,T; W^{1,\infty}(\Omega_\epsilon))$. For such a $\phi_\epsilon$ there exists a unique non-negative $c_{i,\epsilon} \in L^\infty ((0, T) \times \Omega_\epsilon )\cap L^2(0,T; H^1(\Omega_\epsilon)) $ with $\partial_{t} c_{i,\epsilon} \in L^2(0,T;H^1(\Omega_\epsilon)^\prime)$ satisfying (\ref{Exist_1}) and (\ref{Exist_2}), see \cite{Both14}. Note that here the $C^3$ regularity of the domain $\Omega_\epsilon$ is used. Again for these $c_{i, \epsilon}, 1\leq i \leq P$, there exists a unique $\hat{\phi}_\epsilon \in L^\infty (0,T; H^1(\Omega_\epsilon))$ with $\int_{\Omega_\epsilon} \hat{\phi}_\epsilon(t,x) \,dx =0$, for all $t \in [0,T]$ such that $\hat{\phi}_{\epsilon}$ satisfies Poisson's equation (\ref{weak_Poisson}), see \cite[Theorem 4.22]{Cio99}. We note that the compatibility condition 
\begin{equation}\label{existence_2}
\int_{\Omega_\epsilon} \sum_{i=1}^{P} z_i c_{i,\epsilon} (t,	x) \,dx + \int_{\partial \Omega_\epsilon} \xi_\epsilon\,dS(x) =0,\ \ \text{for all $t \in [0,T]$}
\end{equation}
required in the existence proof for $\hat{\phi}_\epsilon$ is satisfied due to the fact that testing equation (\ref{Exist_1}) with $\psi =1$, we obtain
\begin{equation*}
\frac{d}{dt} \int_{\Omega_\epsilon} c_{i,\epsilon}(t,x) \,dx =0 ,\ \ \text{for a.e. $t \in (0,T)$},
\end{equation*}
which together with assumption (A4) yields (\ref{existence_2}). 

Let us now consider the mapping 
\begin{equation}\label{tau}
\tau : L^\infty(0,T; W^{1,\infty} (\Omega_\epsilon)) \rightarrow L^\infty(0,T; W^{1,\infty} (\Omega_\epsilon)), \quad  \tau(\phi_\epsilon) = \hat{\phi}_\epsilon.
\end{equation}
Next, we show that $\hat{\phi}_\epsilon \in L^\infty(0,T;W^{1,\infty}(\Omega_\epsilon))$ so that $\tau$ becomes well-defined. Here again, we need to argue differently from \cite{Both14}. Since $c_{i,\epsilon} \in L^\infty(0,T; L^2(\Omega_\epsilon))$, elliptic regularity results imply that $\hat{\phi}_\epsilon \in L^\infty(0,T; H^2(\Omega_\epsilon))$, see \cite[Theorem 4, p. 217]{Mik78}. Consequently, using $c_{i,\epsilon} \in L^\infty((0,T) \times \Omega_\epsilon)$ and \cite[Lemma 2.4.1.4]{Gri85}, we obtain for almost every $t \in (0,T)$ that $\hat{\phi}_\epsilon(t) \in W^{2,6}(\Omega_\epsilon)$, if $n=3$, and $\hat{\phi}_\epsilon(t) \in W^{2,q}(\Omega_\epsilon)$ for any $q \in [1,\infty)$, if $n=2$. The $L^\infty$-regularity with respect to time, namely $\hat{\phi}_\epsilon \in L^\infty(0,T; W^{2,6}(\Omega_\epsilon))$, if $n=3$, and $\hat{\phi}_\epsilon \in L^\infty(0,T;W^{2,q}(\Omega_\epsilon))$ for any $q \in [1,\infty)$, if $n=2$, is then obtained from \cite[Theorem 2.4.1.3]{Gri85}. Thus, by Sobolev embedding theorem, we have $\hat{\phi} _\epsilon \in L^\infty(0,T;W^{1,\infty} (\Omega_\epsilon))$.

Note that a solution of the microscopic problem \eqref{Exist_1}-\eqref{weak_Poisson} is a fixed point of $\tau$. Thus, in the following, we prove the existence of such a fixed point by using Schaefer's fixed point theorem (see, e.g., \cite[Theorem 4, p. 504]{Eva10}). Since $W^{2,6}(\Omega_\epsilon)$ is compactly embedded in $W^{1,\infty}(\Omega_\epsilon)$, the continuity and compactness of $\tau$ follow by similar arguments as in \cite[Lemma 5]{Both14}. It remains to prove that the set 
\begin{equation}\label{existence_a_0}
 \{\phi_\epsilon \in L^\infty(0,T;W^{1,\infty}(\Omega_\epsilon)): \phi_\epsilon =\lambda \tau (\phi_\epsilon) \ \ \text{for some}\  \lambda \in [0,1]\}
\end{equation}
is bounded. Here we again need alternative arguments to \cite{Both14}. Suppose, $\tau(\phi_\epsilon) =\frac{\phi_\epsilon}{\lambda}$ for some $\lambda \in (0,1]$.  Then $\phi_\epsilon$ is a solution of the potential equation $\eqref{lambda_Poiss}$ in Proposition \ref{Thm: V_est}. Firstly, let us show that $\tau(\phi_\epsilon)$ is bounded in $L^\infty(0,T;W^{2,4}(\Omega_\epsilon))$ independently of $\lambda$. By Proposition \ref{Thm: V_est} and Corollary \ref{Remark_log_bound}, we can bound the $L^\infty(0,T;L^p(\Omega_\epsilon))$ norm of $c_{i,\epsilon}$ independently of $\lambda$. 
Since by assumption, see \eqref{def_h_p}, we have $p \geq 4$, it follows that $c_{i,\epsilon}$ is bounded in $L^\infty(0,T;L^4(\Omega_\epsilon))$ independent of $\lambda$.
Hence, there exists a constant $C_1 >0$ independent of $\lambda$ such that
\begin{equation}\label{existence_3}
 \left \lVert \sum_{i=1}^{P} z_i c_{i,\epsilon} \right \rVert _{L^\infty(0,T;L^4(\Omega_\epsilon))} < C_1.
\end{equation}
Testing with $\tau(\phi_\epsilon)(t)$ in the weak formulation satisfied by $\tau(\phi_\epsilon)(t)$, we get
\begin{eqnarray*}
 \epsilon^\alpha \lVert \nabla \tau (\phi_\epsilon)(t) \rVert^2_{L^2(\Omega_\epsilon)} &&\leq \left \lVert \sum_{i=1}^{P} z_ic_{i,\epsilon}(t) \right \rVert_{L^2(\Omega_\epsilon)} \lVert \tau (\phi_\epsilon)(t) \rVert _{L^2(\Omega_\epsilon)}\\
 &&+ \lVert \xi_\epsilon \rVert _{L^2(\partial \Omega_\epsilon)} \lVert \tau (\phi_\epsilon) (t) \rVert _{L^2(\partial \Omega_\epsilon)}\\
 && \leq C_2 \left( \left \lVert \sum_{i=1}^{P} z_ic_{i,\epsilon}(t) \right \rVert_{L^2(\Omega_\epsilon)}+\lVert \xi_\epsilon \rVert_{L^2(\partial \Omega_\epsilon)}\right) \lVert \nabla \tau (\phi_\epsilon)(t) \rVert_{L^2(\Omega_\epsilon)}.
\end{eqnarray*}
In the last inequality we have used $\int_{\Omega_\epsilon} \tau(\phi_\epsilon)(t) =0$. Consequently, we have
\begin{equation}\label{existence_a}
 \lVert \tau(\phi_\epsilon)(t) \rVert _{H^1(\Omega_\epsilon)} \leq C_3  \left(   \left \lVert \sum_{i=1}^{P} z_ic_{i,\epsilon}(t) \right \rVert_{L^2(\Omega_\epsilon)}+\lVert \xi_\epsilon \rVert_{L^2(\partial \Omega_\epsilon)} \right ).
\end{equation}
Using (\ref{existence_3}) in (\ref{existence_a}) we obtain that, $\lVert \tau (\phi_\epsilon)\rVert_{
L^\infty(0,T;H^1(\Omega_\epsilon))}$ is bounded independent of $\lambda$. Due to the embedding $H^1(\Omega_\epsilon) \subset L^4(\Omega_\epsilon)$, we have for some $C_4>0$ independent of $\lambda$:
\begin{equation} \label{existence_b}
 \lVert \tau (\phi_\epsilon)\rVert_{
L^\infty(0,T;L^4(\Omega_\epsilon))} <C_4.
\end{equation}
Again, with the help of \cite[Theorem 2.4.1.3]{Gri85}, for all $t \in [0,T]$, we obtain
\begin{equation*}
 \lVert \tau(\phi_\epsilon)(t) \rVert_{W^{2,4}(\Omega_\epsilon)} \leq C_5\left(\left\lVert  \epsilon^{-\alpha}  \sum_{i=1}^{P}  z_i c_{i,\epsilon}(t) + \mu \tau(\phi_\epsilon)(t) \right \rVert_{L^4(\Omega_\epsilon)} + \epsilon^{-\alpha} \lVert \xi_\epsilon \rVert_{W^{1-\frac{1}{4},4}(\partial \Omega_\epsilon)} \right ),
\end{equation*}
for some $C_5, \mu >0$ independent of $\lambda$. This implies
\begin{equation}\label{existence_c}
 \lVert \tau(\phi_\epsilon)(t) \rVert_{W^{2,4}(\Omega_\epsilon)} \leq C_6 \left( \left \lVert \sum_{i=1}^{P} z_ic_{i,\epsilon}(t) \right \rVert_{L^4(\Omega_\epsilon)} + \lVert \tau(\phi_\epsilon)(t) \rVert _{L^4(\Omega_\epsilon)} +\lVert \xi_\epsilon \rVert_{W^{1-\frac{1}{4},4}(\partial \Omega_\epsilon)}\right ),
\end{equation}
for some $C_6>0$ independent of $\lambda$. Finally using (\ref{existence_3}), (\ref{existence_b}) in (\ref{existence_c}) we obtain the desired result that $\tau(\phi_\epsilon)$ is bounded in $L^\infty(0,T;W^{2,4}(\Omega_\epsilon))$ independent of $\lambda$.

Consequently, the embedding $W^{2,4}(\Omega_\epsilon) \subset W^{1,\infty} (\Omega_\epsilon)$, implies that $\tau (\phi_\epsilon)$ is bounded in $L^\infty(0,T;W^{1,\infty}(\Omega_\epsilon))$ independent of $\lambda$. Since $\phi_\epsilon =\lambda \tau(\phi_\epsilon)$, for some $\lambda \in (0,1]$, we conclude that $\phi_\epsilon$ is bounded in $L^\infty(0,T;W^{1,\infty}(\Omega_\epsilon))$ independent of $\lambda$ and the proof is complete.
\end{proof}


\section{Uniform estimates for the microscopic solutions}
\label{Sect_unif_est}
In this section, we prove the uniform estimates for the microscopic solutions. To obtain compactness results which allow to pass to the limit in the nonlinear terms, we prove an $L^\infty$-estimate for the concentrations, uniformly with respect to $\epsilon$. The first step towards this result is to show that the concentrations are uniformly bounded in $L^\infty((0,T),L^p(\Omega_\epsilon))$. (Note that in the whole paper $p\in [4, \infty)$ is a fixed index entering the definition \eqref{def_h_p} of the nonlinear diffusion function $h_p$.) For this we use the following energy functional associated to our system, see also \cite{Both14,Both14a}:
\begin{equation}\label{est_V_0}
V_\epsilon(t)=\frac{1}{2\lambda}  \epsilon^{\alpha+\beta} \int_{\Omega_\epsilon} |\nabla \phi_\epsilon |^2 \,dx+\sum_{i=1}^{P} \int_{\Omega_\epsilon} \Psi(c_{i,\epsilon}) \,dx  ,\ \ \text{for a.e. $t \in (0,T)$},
\end{equation}
where $\lambda \in (0,1]$, and 
\begin{equation}\label{def_Psi}
\Psi(r) = r \log r -r + 1+ \frac{\eta}{p-1}r^p,\ \ \text{for $r \geq 0$.}
\end{equation}
%
\begin{proposition}\label{Thm: V_est}
Let $\lambda \in (0,1]$. Consider non-negative functions $c_{i,\epsilon} \in L^\infty ((0,T)\times \Omega_\epsilon) \cap L^2(0,T; H^1(\Omega_\epsilon))$ with $ \partial_{t} c_{i,\epsilon} \in L^2(0,T; H^1(\Omega_\epsilon)^\prime)$, and $\phi_\epsilon \in L^\infty(0,T;W^{2,6}(\Omega_\epsilon))$ with $ \int_{\Omega_\epsilon} \phi_\epsilon (t,x)\,dx =0$ for all $t \in [0,T] $ that satisfy (\ref{Exist_1})-(\ref{Exist_2}) and the following equations:
\begin{align}\label{lambda_Poiss}
	&& -\epsilon^{\alpha} \Delta  \phi _\epsilon(t,x) &=  \lambda \sum_{i=1}^{P}  z_i c_{i,\epsilon} (t,x) &&\text{$\forall t \in [0,T]$ and a.e. $x \in   \Omega_\epsilon,$}
	\end{align}
\begin{align}\label{lambda_Poiss_BC}
	&& \epsilon^{\alpha} \nabla  \phi _\epsilon(t,x) \cdot \nu_\epsilon &= \lambda \xi_\epsilon(x) &&\text{$\forall t \in [0,T]$ and a.e. $x \in \partial \Omega_\epsilon.$}
	\end{align}
Then 
\begin{equation}\label{V_est_9}
 \frac{d}{dt} V_\epsilon(t) \leq 0, \ \ \mbox{for a.e. } t \in (0,T).
\end{equation}
In particular, we have
\begin{equation}\label{est_V_0a}
V_\epsilon(t) \leq C \left(1+\epsilon^ {-\alpha+\beta}  \right), \ \ \text{for all $t \in [0,T]$,}
\end{equation}
with some constant $C>0$ independent of $\epsilon$ and $\lambda$. 
\end{proposition}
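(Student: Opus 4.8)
The plan is to show that $V_\epsilon$ — which, under the stated regularity, extends to a continuous function on $[0,T]$ — is non-increasing, by computing $\frac{d}{dt}V_\epsilon$ and recognizing it as minus a nonnegative dissipation; then \eqref{est_V_0a} follows from $V_\epsilon(t)\le V_\epsilon(0)$ together with $\epsilon$- and $\lambda$-uniform bounds on the two parts of $V_\epsilon(0)$. The computation rests on the elementary identity $\nabla\Psi'(c_{i,\epsilon})=c_{i,\epsilon}^{-1}\nabla h_p(c_{i,\epsilon})$ (equivalently $c\,\Psi''(c)=h_p'(c)$), immediate from \eqref{def_h_p} and \eqref{def_Psi} since $\Psi'(r)=\log r+\tfrac{\eta p}{p-1}r^{p-1}$ and $h_p'(r)=1+\eta p\,r^{p-1}$. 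Testing \eqref{Exist_1} with $\psi=\Psi'(c_{i,\epsilon})$ and summing over $i$ gives, formally,
\begin{equation*}
\frac{d}{dt}\sum_{i=1}^{P}\int_{\Omega_\epsilon}\Psi(c_{i,\epsilon})\,dx=-\sum_{i=1}^{P}\int_{\Omega_\epsilon}\frac{D_i}{c_{i,\epsilon}}\,|\nabla h_p(c_{i,\epsilon})|^2\,dx-\epsilon^{\beta}\sum_{i=1}^{P}D_i z_i\int_{\Omega_\epsilon}\nabla\phi_\epsilon\cdot\nabla h_p(c_{i,\epsilon})\,dx.
\end{equation*}
For the electrostatic part, linearity of \eqref{lambda_Poiss}--\eqref{lambda_Poiss_BC}, the inclusion $\partial_t c_{i,\epsilon}\in L^2(0,T;H^1(\Omega_\epsilon)')$ and the conservation law $\frac{d}{dt}\int_{\Omega_\epsilon}c_{i,\epsilon}=0$ (test \eqref{Exist_1} with $\psi\equiv1$) imply that $\partial_t\phi_\epsilon$ exists in $L^2(0,T;H^1(\Omega_\epsilon))$ and solves the time-differentiated Poisson problem with zero mean and homogeneous Neumann datum ($\xi_\epsilon$ is $t$-independent). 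Testing that problem with $\phi_\epsilon(t)$ and then \eqref{Exist_1} with $\psi=\phi_\epsilon(t)$ yields
\begin{equation*}
\frac{d}{dt}\Bigl(\frac{\epsilon^{\alpha+\beta}}{2\lambda}\int_{\Omega_\epsilon}|\nabla\phi_\epsilon|^2\,dx\Bigr)=-\epsilon^{\beta}\sum_{i=1}^{P}D_i z_i\int_{\Omega_\epsilon}\nabla h_p(c_{i,\epsilon})\cdot\nabla\phi_\epsilon\,dx-\epsilon^{2\beta}\sum_{i=1}^{P}D_i z_i^2\int_{\Omega_\epsilon}c_{i,\epsilon}\,|\nabla\phi_\epsilon|^2\,dx.
\end{equation*}
Adding the two displays, the cross terms double and, completing the square in each species and using \eqref{total_flux},
\begin{equation*}
\frac{d}{dt}V_\epsilon(t)=-\sum_{i=1}^{P}D_i\int_{\Omega_\epsilon}\frac{1}{c_{i,\epsilon}}\bigl|\nabla h_p(c_{i,\epsilon})+\epsilon^{\beta}z_i c_{i,\epsilon}\nabla\phi_\epsilon\bigr|^2\,dx=-\sum_{i=1}^{P}\int_{\Omega_\epsilon}\frac{|J_{i,\epsilon}|^2}{D_i\,c_{i,\epsilon}}\,dx\le 0,
\end{equation*}
which is \eqref{V_est_9}.

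The only non-routine point is that $\Psi'(c_{i,\epsilon})$ carries a logarithmic singularity and need not lie in $H^1(\Omega_\epsilon)$ where $c_{i,\epsilon}$ vanishes. I would handle this by the standard regularization: replace $\Psi$ by $\Psi_\delta$ with $\Psi_\delta''(r)=h_p'(r)/(r+\delta)$, $\delta>0$, chosen so that $\Psi_\delta\to\Psi$ pointwise on $[0,\infty)$. Then $\Psi_\delta'(c_{i,\epsilon})\in L^\infty(0,T;H^1(\Omega_\epsilon))$ is admissible in \eqref{Exist_1} (as $c_{i,\epsilon}\in L^\infty\cap L^2(0,T;H^1(\Omega_\epsilon))$ and $c_{i,\epsilon}+\delta\ge\delta$), $\nabla\Psi_\delta'(c_{i,\epsilon})=\nabla h_p(c_{i,\epsilon})/(c_{i,\epsilon}+\delta)$, and the chain rule $\langle\partial_t c_{i,\epsilon},\Psi_\delta'(c_{i,\epsilon})\rangle=\frac{d}{dt}\int_{\Omega_\epsilon}\Psi_\delta(c_{i,\epsilon})$ holds by an Alt--Luckhaus type argument. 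The computation above then carries over up to an error term $R_\delta$ (containing the factors $\delta/(c_{i,\epsilon}+\delta)$ and $\epsilon^{2\beta}\delta$) with $\int_0^T R_\delta\,dt\to0$ as $\delta\to0$ by dominated convergence, so that $V_\epsilon^\delta(t)\le V_\epsilon^\delta(s)+\int_s^t R_\delta$ for $0\le s\le t$; letting $\delta\to0$ (using that $\Psi_\delta(c_{i,\epsilon})\to\Psi(c_{i,\epsilon})$ a.e.\ and $\Psi_\delta$ is uniformly bounded on the range of $c_{i,\epsilon}$) gives $V_\epsilon(t)\le V_\epsilon(s)$. Hence $V_\epsilon$ is non-increasing, which yields \eqref{V_est_9} a.e.\ and $V_\epsilon(t)\le V_\epsilon(0)$ for all $t$.

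It remains to estimate $V_\epsilon(0)$. By (A3), each $c_i^0\in C^2(\overline{\Omega})$ is nonnegative, so $\Psi(c_i^0)\in L^\infty(\Omega)$ and $\sum_i\int_{\Omega_\epsilon}\Psi(c_i^0)\,dx\le C$, uniformly in $\epsilon$ and $\lambda$. At $t=0$, $\phi_\epsilon(0)$ solves \eqref{lambda_Poiss}--\eqref{lambda_Poiss_BC} with right-hand side $\lambda\sum_i z_i c_i^0$ and boundary datum $\lambda\xi_\epsilon$ (using $c_{i,\epsilon}(0)=c_i^0$; (A4) ensures solvability). Testing with $\phi_\epsilon(0)$ and using $\int_{\Omega_\epsilon}\phi_\epsilon(0)=0$, the $\epsilon$-uniform Poincar\'e--Wirtinger inequality on $\Omega_\epsilon$, the $\epsilon$-scaled trace inequality on $\Gamma_\epsilon$ and the (uniform) trace on $\partial\Omega$, together with $|\Gamma_\epsilon|\le C\epsilon^{-1}$ and $|\xi_\epsilon|\le\epsilon\,\xi^*$ on $\Gamma_\epsilon$ from (A2) as well as $\|\sum_i z_i c_i^0\|_{L^2(\Omega_\epsilon)}\le\|\sum_i z_i c_i^0\|_{L^2(\Omega)}$, one obtains $\|\nabla\phi_\epsilon(0)\|_{L^2(\Omega_\epsilon)}\le C\lambda\,\epsilon^{-\alpha}$ with $C$ independent of $\epsilon$ and $\lambda$. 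Hence $\frac{\epsilon^{\alpha+\beta}}{2\lambda}\|\nabla\phi_\epsilon(0)\|_{L^2(\Omega_\epsilon)}^2\le\frac{C\lambda}{2}\epsilon^{-\alpha+\beta}\le C\epsilon^{-\alpha+\beta}$ since $\lambda\in(0,1]$, and therefore $V_\epsilon(t)\le V_\epsilon(0)\le C(1+\epsilon^{-\alpha+\beta})$.

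The main obstacles are (i) making the entropy test function $\Psi'(c_{i,\epsilon})$ admissible and justifying the associated chain rule, which forces the $\delta$-regularization and a limit passage, and (ii) the exact cancellation of the cross terms, which hinges entirely on the algebraic compatibility $c\,\Psi''(c)=h_p'(c)$ between \eqref{def_Psi} and \eqref{def_h_p}; keeping the constant in \eqref{est_V_0a} independent of $\lambda$ additionally requires carefully tracking the powers of $\lambda$ through the estimate for $\phi_\epsilon(0)$.
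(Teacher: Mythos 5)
Your proof is correct and follows essentially the same entropy--dissipation strategy as the paper: regularize the singular entropy density, compute $\tfrac{d}{dt}V_\epsilon$, identify it with $-\sum_{i}\int_{\Omega_\epsilon}|J_{i,\epsilon}|^2/(D_i c_{i,\epsilon})\,dx\le 0$, and bound $V_\epsilon(0)$ by testing the weak Poisson problem at $t=0$ with $\phi_\epsilon(0)$. The only difference is bookkeeping: you test with the single regularized density $\Psi'_\delta$ (exploiting $c\,\Psi''(c)=h_p'(c)$) and complete a square at the end, whereas the paper tests with $\log(c_{i,\epsilon}+\delta_m)$ and recognizes the $c^p$-part and the electrostatic part as separate exact time derivatives before arriving at the same dissipation identity.
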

 \begin{proof} 
Using $<\partial_{t} c_{i,\epsilon}, 1>_{H^1(\Omega_\epsilon)^\prime,H^1(\Omega_\epsilon)} =0$, we have
\begin{equation}\label{V_est}
\frac{d}{dt}V_\epsilon(t)= \frac{d}{dt}\left(\frac{1}{2\lambda}\epsilon^{\alpha+\beta}\int_{\Omega_\epsilon}|\nabla \phi_\epsilon |^2\right) +\frac{d}{dt}\left(\sum_{i=1}^{P}  \int_{\Omega_\epsilon} c_{i,\epsilon} \log c_{i,\epsilon}\right)+\frac{d}{dt}\left(\sum_{i=1}^{P}  \int_{\Omega_\epsilon}\frac{\eta}{p-1}c_{i,\epsilon}^p \right),
\end{equation}
for almost every $t \in (0,T)$, provided that each of the three time derivatives given in the right hand side of (\ref{V_est}) exists. Next we show that this is the case. 
Suppose $\delta_m$ is a sequence of positive numbers tending to zero as $m$ tends to infinity. Then from Lemma \ref{lemma_weak_time_derivative},  
we have for all $m \in \N$ and $t_1 \in [0,T]$,
\begin{eqnarray} \label{V_est_1}
&&\int_{\Omega_\epsilon} [c_{i,\epsilon}(t_1) + \delta_m] \log [c_{i,\epsilon}(t_1) + \delta_m] \,dx - \int_{\Omega_\epsilon}[c_{i,\epsilon}(0) + \delta_m] \log [c_{i,\epsilon}(0) + \delta_m] \,dx \nonumber\\
&&= \int_{0}^{t_1} \left < \partial _{t} c_{i,\epsilon} , \log (c_{i,\epsilon} + \delta_m) \right>_{{H^1(\Omega_\epsilon)^\prime,H^1(\Omega_\epsilon)}} \,dt.
\end{eqnarray}
Testing (\ref{Exist_1}) with $\log (c_{i,\epsilon} + \delta_m)$ we see that the right hand side of (\ref{V_est_1}) is equal to 
\begin{eqnarray*}
&& \int_{0}^{t_1} \int_{\Omega_\epsilon} J_{i, \epsilon} \cdot \frac{\nabla c_{i,\epsilon}}{c_{i,\epsilon}+\delta_m}dx dt \\
&&=  \int_{0}^{t_1} \int_{\Omega_\epsilon}\frac{1-h_p^ \prime (c_{i,\epsilon})}{c_{i,\epsilon}+\delta_m} \ \nabla c_{i,\epsilon} \cdot J_{i, \epsilon} + \frac{\nabla h_p(c_{i,\epsilon})}{c_{i,\epsilon}+\delta_m} \cdot J_{i, \epsilon}dx dt \\ 
&&=  \int_{0}^{t_1} \int_{\Omega_\epsilon}\frac{1-h_p^ \prime (c_{i,\epsilon})}{c_{i,\epsilon}+\delta_m} \ \nabla c_{i,\epsilon} \cdot J_{i, \epsilon} + \frac{ D_{i} \nabla h_p (c_{i,\epsilon})+\epsilon^\beta D_{i} z_i c_{i,\epsilon} \nabla \phi_\epsilon}{D_{i}(c_{i,\epsilon}+ \delta_m)} \cdot J_{i, \epsilon}dx dt \\
&&-\int_{0}^{t_1} \int_{\Omega_\epsilon} \frac{\epsilon^\beta z_i c_{i,\epsilon}\nabla \phi_\epsilon}{c_{i,\epsilon}+\delta_m} \cdot J_{i, \epsilon}dx dt \\
&&=  \int_{0}^{t_1} \int_{\Omega_\epsilon} - \frac{\eta p c_{i,\epsilon}^{p-1}}{c_{i,\epsilon}+\delta_m} \nabla c_{i,\epsilon} \cdot J_{i, \epsilon} - \frac{|J_{i, \epsilon}|^2}{D_{i}(c_{i,\epsilon}+\delta_m)} - \frac{\epsilon^\beta z_i c_{i,\epsilon}\nabla \phi_\epsilon}{c_{i,\epsilon}+\delta_m} \cdot J_{i, \epsilon} dx dt .
\end{eqnarray*}
Hence from (\ref{V_est_1}), we get
\begin{eqnarray}\label{V_est_2}
\int_{\Omega_\epsilon} [c_{i,\epsilon}(t_1) + \delta_m] \log [c_{i,\epsilon}(t_1) + \delta_m] - [c_{i,\epsilon}(0) + \delta_m] \log [c_{i,\epsilon}(0) + \delta_m] dx \nonumber \\
= \int_{0}^{t_1} \int_{\Omega_\epsilon} - \frac{\eta p c_{i,\epsilon}^{p-1}}{c_{i,\epsilon}+\delta_m} \nabla c_{i,\epsilon} \cdot J_{i, \epsilon} - \frac{|J_{i, \epsilon}|^2}{D_{i}(c_{i,\epsilon}+\delta_m)} - \frac{\epsilon^\beta z_i c_{i,\epsilon}\nabla \phi_\epsilon}{c_{i,\epsilon}+\delta_m} \cdot J_{i, \epsilon}dx dt .
\end{eqnarray}
Again, using the monotone convergence theorem, we have
\begin{equation}\label{V_est_3}
\lim_{m \to \infty}\int_{0}^{t_1} \int_{\Omega_\epsilon} \frac{|J_{i, \epsilon}|^2}{D_{i}(c_{i,\epsilon}+\delta_m)}dx dt = \int_{0}^{t_1} \int_{\Omega_\epsilon} \frac{|J_{i, \epsilon}|^2}{D_{i}c_{i,\epsilon}}dx dt.
\end{equation}
However, this limit can be infinite. Next, we shall show that this is not the case. 

The inequalities
\begin{equation*}
x \log x \geq -1 ,\ \ \forall x>0 \ \ \text{and};
\end{equation*}
\begin{flalign*}
x \log x \leq
\begin{cases}
0 &\text{ if $x\in (0,1)$,} \\
x^2 &\text{ if $x \geq 1$;}
\end{cases}
\end{flalign*}
together give the following:
\begin{equation*}
|(c_{i,\epsilon}+\delta_m) \log(c_{i,\epsilon}+\delta_m)| \leq 1+ (c_{i,\epsilon}+\delta_m)^2 \leq 1+ (c_{i,\epsilon}+1)^2 \ \ \text{(for large $m$),}
\end{equation*}
for all $t \in [0,T]$ and almost every $x \in \Omega_\epsilon$. For each $t$, we have $1+ (c_{i,\epsilon}+1)^2 \in L^1(\Omega_\epsilon)$. This fact and the continuity of the function $ x  \mapsto x \log x$ for all $x\geq 0$ allow us to use the dominated convergence theorem to obtain the following:
\begin{equation}\label{V_est_4}
\lim_{m \to \infty} \int_{\Omega_\epsilon} (c_{i,\epsilon}+\delta_m) \log (c_{i,\epsilon}+\delta_m)dx = \int_{\Omega_\epsilon}c_{i,\epsilon} \log c_{i,\epsilon} dx,  \ \ \forall t \in [0,T].
\end{equation}
Using (\ref{V_est_3}), (\ref{V_est_4}), the fact that
\begin{eqnarray*}
0 \leq \frac{c_{i,\epsilon}}{c_{i,\epsilon}+ \delta_m} < 1; \ \text{and as $m \to \infty$,} \ 	
	 \frac{c_{i,\epsilon}}{c_{i,\epsilon}+ \delta_m} \to \chi_{\{c_{i,\epsilon} \neq 0\} }\ \text{a.e. in $(0,T) \times \Omega_\epsilon$;}
\end{eqnarray*}advective
we pass to the limit in (\ref{V_est_2}):
\begin{eqnarray}\label{V_est_a}
 \sum_{i=1}^{P}\int_{\Omega_\epsilon}  c_{i,\epsilon}(t_1)  \log c_{i,\epsilon}(t_1)  -  c_{i,\epsilon}(0) \log c_{i,\epsilon}(0) dx \nonumber \\
= \sum_{i=1}^{P}\int_{0}^{t_1} \int_{\Omega_\epsilon}  -\, \eta p c_{i,\epsilon}^{p-2} \nabla c_{i,\epsilon} \cdot J_{i, \epsilon} - \frac{|J_{i, \epsilon}|^2}{D_{i}c_{i,\epsilon}} - \epsilon^\beta z_i \chi_{\{c_{i,\epsilon} \neq 0\}} \nabla \phi_\epsilon \cdot J_{i, \epsilon} dx dt ,\nonumber\\
\  \text{$\forall t_1 \in [0,T]$.}
\end{eqnarray}
We note from (\ref{V_est_a}) that $\sum_{i=1}^P\frac{|J_{i, \epsilon}|^2}{D_{i}c_{i,\epsilon}} \in L^1((0,t_1) \times \Omega_\epsilon)$ and hence the limit in (\ref{V_est_3}) is finite. Now from  (\ref{V_est_a}), we see that $ \sum_{i=1}^P \int_{\Omega_\epsilon} c_{i,\epsilon} \log c_{i,\epsilon}$ is absolutely continuous on $[0,T]$ and
\begin{eqnarray}\label{V_est_5}
&&\frac{d}{dt} \sum_{i=1}^{P} \int_{\Omega_\epsilon}  c_{i,\epsilon}  \log c_{i,\epsilon} dx \nonumber \\
&&= \sum_{i=1}^{P} \int_{\Omega_\epsilon}  - \,\eta p c_{i,\epsilon}^{p-2} \nabla c_{i,\epsilon} \cdot J_{i, \epsilon} - \frac{|J_{i, \epsilon}|^2}{D_{i}c_{i,\epsilon}} - \epsilon^\beta z_i \chi_{\{c_{i,\epsilon} \neq 0\}}\nabla \phi_\epsilon \cdot J_{i, \epsilon}dx, \nonumber \\
\end{eqnarray}
for almost every $t \in (0,T)$.

Next, consider the first term of the right hand side of (\ref{V_est_5}). Using (\ref{Exist_1}) we have

\begin{eqnarray}\label{V_est_6}
&&\sum_{i=1}^{P} \int_{\Omega_\epsilon}  \eta p c_{i,\epsilon}^{p-2} \nabla c_{i,\epsilon} \cdot J_{i, \epsilon}dx =\sum_{i=1}^{P}  \int_{\Omega_\epsilon} \nabla \left ( \frac{\eta p}{p-1} c_{i,\epsilon}^{p-1}\right) \cdot J_{i, \epsilon}dx \nonumber \\
&& =\sum_{i=1}^{P} \left < \partial _{t} c_{i,\epsilon} , \frac{\eta p}{p-1} c_{i,\epsilon}^{p-1} \right >_ {H^1(\Omega_\epsilon)^\prime,H^1(\Omega_\epsilon)} = \sum_{i=1}^{P}  \frac{d}{dt} \int_{\Omega_\epsilon }\frac{\eta }{p-1} c_{i,\epsilon}^{p}dx,
\end{eqnarray}
where the last equality can be proven using approximation arguments similar to Lemma \ref{lemma_weak_time_derivative}. 

Since $\nabla c_{i,\epsilon}= 0$ almost everywhere on the set $A  := \{(t,x) \in (0,T) \times \Omega_\epsilon: c_{i,\epsilon}  (t,x)= 0\}$, we see that $J_{i, \epsilon} $ vanishes on $A$. Using this the last term of the right hand side of (\ref{V_est_5}) becomes
\begin{eqnarray}-\label{V_est_7}
\sum_{i=1}^{P} \int_{\Omega_\epsilon}
\epsilon^\beta z_i \chi_{\{c_{i,\epsilon} \neq 0\}}\nabla \phi_\epsilon \cdot J_{i, \epsilon}dx &&=- \sum_{i=1}^{P} \int_{\Omega_\epsilon}
\epsilon^\beta z_i \nabla \phi_\epsilon \cdot J_{i, \epsilon} dx\nonumber \\
&&= -\epsilon^\beta \left <\sum_{i=1}^P z_i\partial_{t}c_{i,\epsilon}, \phi_\epsilon \right>_{H^1(\Omega_\epsilon)^\prime,H^1(\Omega_\epsilon)}.
 \end{eqnarray}
 Differentiating (\ref{lambda_Poiss}), (\ref{lambda_Poiss_BC})  with respect to $t$, we see that $\partial_{t} \phi_\epsilon \in H^1(\Omega_\epsilon)$ and the last term in (\ref{V_est_7}) is equal to
 \begin{eqnarray}\label{V_est_8}
 && -\epsilon^\beta \frac{1}{\lambda} \int_{\Omega_\epsilon} \epsilon^\alpha \nabla \partial_{t}   \phi_\epsilon    \nabla  \phi_\epsilon  dx  =- \frac{1}{2\lambda}\epsilon^{\alpha + \beta}\frac{d}{dt} \int_{\Omega_\epsilon} |\nabla \phi_\epsilon|^2 dx,
  \end{eqnarray}
  where we used $\partial_{t} \xi_\epsilon =0$, since $\xi_\epsilon$ is independent of $t$.
  From (\ref{V_est_5}), (\ref{V_est_6}), (\ref{V_est_8}) we conclude that $V_\epsilon$ is differentiable with respect to time and (\ref{V_est}) holds. Moreover, using (\ref{V_est_5}) (\ref{V_est_6}), (\ref{V_est_7}), (\ref{V_est_8})  in (\ref{V_est}), we have 
  \begin{equation*}
   \frac{d}{dt} V_\epsilon(t)+\sum_{i=1}^P\int_{\Omega_\epsilon}\frac{|J_{i, \epsilon}|^2}{D_{i,\epsilon}c_{i,\epsilon}} dx =0, \ \ \text{for a.e. $t \in (0,T)$.}
  \end{equation*}
Hence, 
\begin{equation*}
 \frac{d}{dt} V_\epsilon(t) \leq 0, \ \text{for a.e. $t \in (0,T)$,} 
\end{equation*}
what implies
\begin{align*}
V_\epsilon(t) \leq V_\epsilon(0) \qquad \mbox{ for all } t \in [0,T].
\end{align*}

It remains to obtain uniform estimate of $V_\epsilon(0)$.
\begin{eqnarray}\label{V_est_10}
 V_\epsilon(0) &&=\underbrace{\frac{1}{2\lambda}  \epsilon^{\alpha+\beta} \int_{\Omega_\epsilon} |\nabla \phi_\epsilon (0)|^2 \,dx}_\text{$:=I_1$} \nonumber  \\
 &&+ \underbrace{\sum_{i=1}^{P} \int_{\Omega_\epsilon}  \left(c_{i}^0  \log c_{i}^0 -c_{i}^0 +1 + \frac{\eta}{p-1}(c_{i}^0) ^p\right )\,dx}_\text{$:= I_2$}.
 \end{eqnarray}
 Now,
\begin{equation}\label{V_est_10_a}
 | I_2 | \leq  |\Omega | \sum_{i=1}^{P} \max _{c_i^0 \in \overline{\Omega}} \left | c_{i}^0  \log c_{i}^0 -c_{i}^0 +1 + \frac{\eta}{p-1}(c_{i}^0) ^p \right | \leq C  , 
\end{equation}
for some $C >0$ independent of $\epsilon$.

Now it remains to estimate $I_1$. Testing with $\phi_\epsilon(0)$ the weak formulation of (\ref{lambda_Poiss})-(\ref{lambda_Poiss_BC}) at $t=0$ , we get 
\begin{eqnarray*}
&&\epsilon^\alpha \int_{\Omega_\epsilon} | \nabla \phi_\epsilon (0) | ^2  \,dx\\
&&\leq C \lambda \left [  \int_{\Omega_\epsilon} |\phi_\epsilon (0) | \,dx+  \epsilon \int_{\Gamma_\epsilon}  |\phi_\epsilon (0)| \,dS(x) + \int_{\partial \Omega}  |\phi_\epsilon (0)| \,dS(x) \right ].
\end{eqnarray*}
Since $|\Omega_\epsilon|= O(1)$ and $| \Gamma_\epsilon| = O \left (\frac{1}{\epsilon}\right)$, we have 
\begin{eqnarray}\label{phi_0_est_1}
 \epsilon^\alpha \lVert \nabla \phi_\epsilon(0) \rVert ^2_{L^2(\Omega_\epsilon)}  \leq C \lambda \left [\lVert \phi_\epsilon(0) \rVert_{L^2(\Omega_\epsilon)} + \sqrt{\epsilon} \lVert \phi_\epsilon(0) \rVert_{L^2(\Gamma_\epsilon)} + \lVert \phi_\epsilon(0) \rVert_{L^2(\partial \Omega)}\right].
\end{eqnarray}
From the properties of the extension operator in Lemma \ref{lemma_extension} given in the appendix and the usual trace-inequality we obtain
\begin{equation}\label{phi_0_est_3}
 \lVert \phi_\epsilon(0) \rVert_{L^2(\partial \Omega)} = \lVert \widetilde{\phi_\epsilon}(0) \rVert_{L^2(\partial \Omega)} \leq C (\Omega) \lVert \widetilde{\phi_\epsilon}(0) \rVert_{H^1( \Omega)} \leq C_1 \lVert \phi_\epsilon(0) \rVert_{H^1( \Omega_\epsilon)},
\end{equation}
together with the scaled trace-inequality from Lemma \ref{TraceInequalityLemma} from the appendix we obtain from  $\eqref{phi_0_est_1}$  
\begin{eqnarray*}
 \epsilon^\alpha \lVert \nabla \phi_\epsilon(0) \rVert ^2_{L^2(\Omega_\epsilon)}  \leq C \lambda \lVert \phi_\epsilon(0) \rVert_{H^1( \Omega_\epsilon)}.
\end{eqnarray*}
Then (\ref{Lemma_mean_value}) leads to 
\begin{equation*}
 \epsilon^\alpha \lVert \nabla \phi_\epsilon(0) \rVert _{L^2(\Omega_\epsilon)}  \leq C \lambda.
\end{equation*}
Since $\lambda \in (0,1]$, we obtain
\begin{equation}\label{V_est_10_b}
 I_1 \leq C \epsilon ^ {-\alpha + \beta}.
\end{equation}
(\ref{V_est_10_a}) and (\ref{V_est_10_b}) complete the proof.
\end{proof}
\begin{corollary}\label{Remark_log_bound}
Due to the fact that  $r \log r -r +1 \geq 0$, for $r \geq 0$, and taking into account that $\alpha \leq \beta$, Proposition \ref{Thm: V_est} immediately implies
\begin{equation}\label{EstLinftyLp}
 \lVert c_{i,\epsilon}  \rVert_{L^\infty(0,T;L^p(\Omega_\epsilon))} \leq C, 
\end{equation}
with $C>0$ independent of $\lambda$ and $\epsilon$.
\end{corollary}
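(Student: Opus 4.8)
The plan is to read the bound off directly from the energy estimate \eqref{est_V_0a}, with no new work. First I would exploit the elementary inequality $r\log r - r + 1 \geq 0$ for $r\geq 0$, which gives the pointwise lower bound $\Psi(r) \geq \frac{\eta}{p-1}\,r^p \geq 0$ on the integrand in \eqref{def_Psi}. Since in addition the first term of $V_\epsilon(t)$ in \eqref{est_V_0} equals $\frac{1}{2\lambda}\epsilon^{\alpha+\beta}\lVert\nabla\phi_\epsilon(t)\rVert_{L^2(\Omega_\epsilon)}^2 \geq 0$ and each summand $\int_{\Omega_\epsilon}\Psi(c_{j,\epsilon}(t))\,dx$ is nonnegative, for every fixed $i$ one gets
\begin{equation*}
\frac{\eta}{p-1}\,\lVert c_{i,\epsilon}(t)\rVert_{L^p(\Omega_\epsilon)}^p \;\leq\; \int_{\Omega_\epsilon}\Psi(c_{i,\epsilon}(t))\,dx \;\leq\; \sum_{j=1}^{P}\int_{\Omega_\epsilon}\Psi(c_{j,\epsilon}(t))\,dx \;\leq\; V_\epsilon(t),
\end{equation*}
for a.e.\ $t\in(0,T)$, and hence for all $t\in[0,T]$ by the regularity underlying \eqref{est_V_0a}.

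Second, I would insert the bound $V_\epsilon(t)\leq C\left(1+\epsilon^{-\alpha+\beta}\right)$ from Proposition \ref{Thm: V_est}, whose constant $C$ is already independent of $\epsilon$ and $\lambda$. The hypothesis $\alpha\leq\beta$ forces $-\alpha+\beta\geq 0$, so for $\epsilon\in(0,1)$ (which we may assume) we have $\epsilon^{-\alpha+\beta}\leq 1$ and therefore $1+\epsilon^{-\alpha+\beta}\leq 2$. Combining this with the previous display yields $\lVert c_{i,\epsilon}(t)\rVert_{L^p(\Omega_\epsilon)}^p \leq \frac{2C(p-1)}{\eta}$ uniformly in $t$, $\epsilon$ and $\lambda$; taking the supremum over $t\in[0,T]$ gives \eqref{EstLinftyLp} with the explicit constant $\left(\frac{2C(p-1)}{\eta}\right)^{1/p}$.

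There is essentially no obstacle to overcome here: all the analytic content has been front-loaded into Proposition \ref{Thm: V_est}, and the corollary amounts to two pieces of bookkeeping — discarding the nonnegative electrostatic and entropic contributions to $V_\epsilon$ so as to isolate the $L^p$-norm, and observing that the $\epsilon$-dependent term in \eqref{est_V_0a} stays bounded precisely because $\alpha\leq\beta$. The one point worth stating explicitly is that uniformity with respect to $\lambda$ (needed later in the existence proof, cf.\ the fixed-point argument in Proposition \ref{thm_exist}) is inherited verbatim from the corresponding uniformity in \eqref{est_V_0a}.
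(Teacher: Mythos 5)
Your proof is correct and matches the (implicit) argument the paper has in mind: discard the nonnegative electrostatic term and the entropy part $r\log r - r + 1 \geq 0$ of $\Psi$ to isolate $\frac{\eta}{p-1}\lVert c_{i,\epsilon}(t)\rVert_{L^p(\Omega_\epsilon)}^p \leq V_\epsilon(t)$, then use \eqref{est_V_0a} together with $\alpha\leq\beta$ (so $\epsilon^{-\alpha+\beta}\leq 1$ for $\epsilon<1$) to obtain a bound uniform in $t$, $\epsilon$, and $\lambda$. The paper states this as an immediate consequence without writing out the steps, and your write-up is exactly that bookkeeping, including the correct observation that the $\lambda$-independence is inherited from Proposition \ref{Thm: V_est}.
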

Based on estimate \eqref{EstLinftyLp}, in the following theorem we derive energy estimates for the microscopic solutions $(c_{i,\epsilon},\phi_\epsilon)$.
\begin{proposition}\label{Thm_energ_est_1}
 There exists a constant $C>0$ independent of $\epsilon$ such that the microscopic solutions $(c_{i,\epsilon},\phi_\epsilon)$ of (\ref{Exist_1})-(\ref{weak_Poisson}) satisfy the following estimates:
 \begin{equation}\label{Thm_energy_est_1}
\epsilon ^\alpha \lVert \phi_\epsilon \rVert_{L^\infty(0,T;L^2(\Omega_\epsilon))} + \epsilon ^\alpha \lVert \nabla  \phi_\epsilon \rVert_{L^\infty(0,T;L^2(\Omega_\epsilon))} \leq C,
 \end{equation}
 \begin{equation}\label{Thm_energy_est_2}
\lVert c_{i,\epsilon} \rVert _{L^\infty(0,T;L^p(\Omega_\epsilon))} + \lVert \nabla c_{i,\epsilon} \rVert_{L^2(0,T;L^2(\Omega_\epsilon))}\leq C.
\end{equation}
\end{proposition}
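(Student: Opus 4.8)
The plan is to establish the two estimates in sequence, using Corollary \ref{Remark_log_bound} (with $\lambda = 1$, corresponding to the actual microscopic problem rather than the fixed-point family) as the starting point. Since \eqref{EstLinftyLp} gives $\lVert c_{i,\epsilon}\rVert_{L^\infty(0,T;L^p(\Omega_\epsilon))} \leq C$ with $C$ independent of $\epsilon$, the first summand in \eqref{Thm_energy_est_2} is immediate, and $p \geq 4 \geq 2$ gives a uniform $L^\infty(0,T;L^2(\Omega_\epsilon))$ bound for $\sum_i z_i c_{i,\epsilon}$. To get \eqref{Thm_energy_est_1}, I would test the weak Poisson equation \eqref{weak_Poisson} at a fixed time $t$ with $\upsilon = \phi_\epsilon(t)$ (admissible since $\phi_\epsilon(t)\in H^1(\Omega_\epsilon)$ and has zero mean over $\Omega_\epsilon$), exactly as in the existence proof: this yields
\begin{equation*}
\epsilon^\alpha \lVert \nabla\phi_\epsilon(t)\rVert_{L^2(\Omega_\epsilon)}^2 \leq \Big\lVert \sum_{i=1}^P z_i c_{i,\epsilon}(t)\Big\rVert_{L^2(\Omega_\epsilon)} \lVert \phi_\epsilon(t)\rVert_{L^2(\Omega_\epsilon)} + \lVert \xi_\epsilon\rVert_{L^2(\partial\Omega_\epsilon)} \lVert \phi_\epsilon(t)\rVert_{L^2(\partial\Omega_\epsilon)}.
\end{equation*}
Using the extension operator (Lemma \ref{lemma_extension}), the trace inequality \eqref{phi_0_est_3}, the scaled trace inequality (Lemma \ref{TraceInequalityLemma}), and the Poincaré-type inequality \eqref{Lemma_mean_value} for zero-mean functions, the right-hand side is controlled by $C\lVert\nabla\phi_\epsilon(t)\rVert_{L^2(\Omega_\epsilon)}$, where the constant absorbs $\lVert\xi_\epsilon\rVert_{L^2(\partial\Omega_\epsilon)} = O(1)$ (note $|\Gamma_\epsilon| = O(1/\epsilon)$ but $\xi_\epsilon = O(\epsilon)$ on $\Gamma_\epsilon$, so $\lVert\xi_\epsilon\rVert_{L^2(\partial\Omega_\epsilon)}^2 = O(\epsilon)\cdot O(1/\epsilon) + O(1) = O(1)$). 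Dividing through and using $\epsilon^\alpha \leq \epsilon^\alpha$ together with $\alpha$ fixed, one gets $\epsilon^\alpha\lVert\nabla\phi_\epsilon(t)\rVert_{L^2(\Omega_\epsilon)} \leq C$ uniformly in $t$ and $\epsilon$; the $L^2$-bound then follows from Poincaré. Taking the supremum over $t\in[0,T]$ gives \eqref{Thm_energy_est_1}.

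For the gradient bound on the concentrations in \eqref{Thm_energy_est_2}, the natural test function in \eqref{Exist_1} is $\psi = c_{i,\epsilon}$ (or, to work directly with $h_p$, $\psi = h_p(c_{i,\epsilon})$; since $h_p' \geq 1$ this is harmless). Testing with $c_{i,\epsilon}$ and integrating over $(0,t)$ gives
\begin{equation*}
\frac{1}{2}\lVert c_{i,\epsilon}(t)\rVert_{L^2(\Omega_\epsilon)}^2 + \int_0^t\!\!\int_{\Omega_\epsilon} D_i h_p'(c_{i,\epsilon})|\nabla c_{i,\epsilon}|^2\,dx\,dt = \frac{1}{2}\lVert c_i^0\rVert_{L^2(\Omega_\epsilon)}^2 - \epsilon^\beta\int_0^t\!\!\int_{\Omega_\epsilon} D_i z_i c_{i,\epsilon}\nabla\phi_\epsilon\cdot\nabla c_{i,\epsilon}\,dx\,dt.
\end{equation*}
Since $h_p'(r) = 1 + \eta p r^{p-1} \geq 1$, the diffusion term dominates $D_i\lVert\nabla c_{i,\epsilon}\rVert_{L^2((0,t)\times\Omega_\epsilon)}^2$. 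The drift term is handled by Hölder and Young: $\epsilon^\beta\lvert\int\int c_{i,\epsilon}\nabla\phi_\epsilon\cdot\nabla c_{i,\epsilon}\rvert \leq \delta\lVert\nabla c_{i,\epsilon}\rVert_{L^2}^2 + C_\delta \epsilon^{2\beta}\int_0^t \lVert c_{i,\epsilon}\nabla\phi_\epsilon\rVert_{L^2(\Omega_\epsilon)}^2$, and one estimates $\lVert c_{i,\epsilon}\nabla\phi_\epsilon\rVert_{L^2(\Omega_\epsilon)} \leq \lVert c_{i,\epsilon}\rVert_{L^p(\Omega_\epsilon)}\lVert\nabla\phi_\epsilon\rVert_{L^{2p/(p-2)}(\Omega_\epsilon)}$; since $p\geq 4$ one has $2p/(p-2) \leq 4 \leq 6$, so in dimension $n\leq 3$ the Sobolev embedding $H^1(\Omega_\epsilon)\hookrightarrow L^{2p/(p-2)}(\Omega_\epsilon)$ applies (via the extension to $\Omega$ so the embedding constant is $\epsilon$-independent), giving $\lVert\nabla\phi_\epsilon\rVert_{L^{2p/(p-2)}} \lesssim \lVert\phi_\epsilon\rVert_{W^{2,\cdot}}$ — but to avoid higher norms one instead uses that $\epsilon^\beta\lVert\nabla\phi_\epsilon\rVert$ is already tamed: more cleanly, write $\epsilon^\beta c_{i,\epsilon}\nabla\phi_\epsilon = \epsilon^{\beta-\alpha}c_{i,\epsilon}(\epsilon^\alpha\nabla\phi_\epsilon)$ and use $\beta \geq \alpha$ together with \eqref{Thm_energy_est_1} to bound $\epsilon^\alpha\lVert\nabla\phi_\epsilon\rVert_{L^2}$ uniformly. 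Combining, $\int_0^t\lVert c_{i,\epsilon}\nabla\phi_\epsilon\rVert_{L^2}^2$ is controlled by $\lVert c_{i,\epsilon}\rVert_{L^\infty L^p}^2$ times the $L^\infty$-in-time bound on $\epsilon^\alpha\nabla\phi_\epsilon$ in $L^2$ (with a $\lVert\nabla\phi_\epsilon\rVert_{L^\infty(\Omega_\epsilon)}$ factor that one must route through the elliptic regularity of Proposition \ref{thm_exist}, or alternatively through an interpolation keeping only $H^1$-norms), yielding a uniform bound. Absorbing $\delta\lVert\nabla c_{i,\epsilon}\rVert^2$ on the left completes \eqref{Thm_energy_est_2}.

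The main obstacle is the drift term $\epsilon^\beta \int\int D_i z_i c_{i,\epsilon}\nabla\phi_\epsilon\cdot\nabla c_{i,\epsilon}$: one must bound it using only the $L^\infty L^p$-control on $c_{i,\epsilon}$ and the $\epsilon^\alpha$-weighted $L^\infty L^2$-control on $\nabla\phi_\epsilon$, without invoking $L^\infty$- or $W^{1,\infty}$-bounds on $\phi_\epsilon$ that are not yet known to be $\epsilon$-uniform (indeed, the paper stresses avoiding higher-than-$H^1$ norms for $\phi_\epsilon$). The resolution exploits the scaling hypothesis $\alpha \leq \beta$: the factor $\epsilon^\beta$ on the drift, rewritten as $\epsilon^{\beta-\alpha}\cdot\epsilon^\alpha$, exactly compensates the $\epsilon^\alpha$-weight needed to make $\nabla\phi_\epsilon$ uniformly bounded in $L^2$, and the product $c_{i,\epsilon}\nabla\phi_\epsilon$ is handled by Hölder with exponents $p$ and $2p/(p-2)$, the latter being $\leq 4$ thanks to $p\geq 4$, so that a single application of the $\epsilon$-independent Sobolev embedding on the extended potential suffices. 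Everything else — the Poincaré inequality for zero-mean functions, the scaled trace inequality, the boundedness of $\lVert\xi_\epsilon\rVert_{L^2(\partial\Omega_\epsilon)}$, and Gronwall if needed to close the $L^2$-in-time estimate — is routine.
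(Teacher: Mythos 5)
Your treatment of \eqref{Thm_energy_est_1} matches the paper's proof essentially verbatim (test Poisson with $\phi_\epsilon$, use the $L^\infty L^2$ bound on $\sum_i z_i c_{i,\epsilon}$ from Corollary \ref{Remark_log_bound}, the trace inequalities, and the $\epsilon$-uniform Poincaré inequality of Lemma \ref{lemma_mean_value}). However, the route you take for the gradient bound in \eqref{Thm_energy_est_2} has a genuine gap in the estimate of the drift term, and the paper proceeds differently precisely to avoid it.

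The difficulty is that in $\int_{\Omega_\epsilon} c_{i,\epsilon}\,\nabla\phi_\epsilon\cdot\nabla c_{i,\epsilon}$, after Young absorbs $\nabla c_{i,\epsilon}$, you are left to control $\lVert c_{i,\epsilon}\,\nabla\phi_\epsilon\rVert_{L^2(\Omega_\epsilon)}$. Your Hölder splitting gives $\lVert c_{i,\epsilon}\rVert_{L^p}\lVert\nabla\phi_\epsilon\rVert_{L^{2p/(p-2)}}$ with $2p/(p-2)>2$ strictly, so the only $\epsilon$-uniform information on $\phi_\epsilon$ — namely the weighted $L^\infty(0,T;H^1)$ bound \eqref{Thm_energy_est_1} — is insufficient: a Sobolev embedding applied to $\phi_\epsilon$ only upgrades $\phi_\epsilon$ itself to $L^q$, not $\nabla\phi_\epsilon$, and to raise the integrability of $\nabla\phi_\epsilon$ you would need an $\epsilon$-uniform $W^{2,\cdot}$ estimate. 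The $W^{2,6}$ regularity from Proposition \ref{thm_exist} holds only for fixed $\epsilon$; the constants there depend on the perforated domain and are not uniform, so ``routing through the elliptic regularity of Proposition \ref{thm_exist}'' does not close the argument. Likewise, interpolation ``keeping only $H^1$-norms'' cannot produce an $L^q$ bound, $q>2$, on $\nabla\phi_\epsilon$ from an $L^2$ bound alone.

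The paper's resolution is structurally different and is the key idea you are missing: rewrite the drift integrand as $\tfrac{1}{2}\nabla\phi_\epsilon\cdot\nabla(c_{i,\epsilon}^2)$ and then use $c_{i,\epsilon}^2$ (which lies in $H^1(\Omega_\epsilon)$ thanks to the $L^\infty$ bound for fixed $\epsilon$) as a test function in the weak Poisson equation \eqref{weak_Poisson}. This replaces $\epsilon^\alpha\int\nabla\phi_\epsilon\cdot\nabla(c_{i,\epsilon}^2)$ exactly by $\int_{\Omega_\epsilon} c_{i,\epsilon}^2\sum_j z_j c_{j,\epsilon}\,dx + \int_{\partial\Omega_\epsilon}\xi_\epsilon c_{i,\epsilon}^2\,dS$, so the potential drops out entirely. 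The volume term is then controlled by the $L^3$ (hence $L^p$, $p\geq 4$) bound on the concentrations from Corollary \ref{Remark_log_bound}, and the boundary term by the scaled trace inequality (Lemma \ref{TraceInequalityLemma}) and the weighted trace inequality on $\partial\Omega$, whose gradient contributions are absorbed on the left for $\epsilon$ and $\delta$ small. This is exactly the mechanism that avoids any norm of $\phi_\epsilon$ beyond the $\epsilon^\alpha$-weighted $H^1$ level, and your proposal should be revised to incorporate it.
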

\begin{proof}
 Testing the weak formulation of Poisson's equation (\ref{weak_Poisson}) with $\phi_{\epsilon}$, we get almost everywhere in $(0,T)$
 \begin{eqnarray} \label{energ_est_1}
  &&\epsilon^\alpha \lVert \nabla \phi_\epsilon  \rVert ^2_{L^2(\Omega_\epsilon)}\nonumber \\
  && \leq \left \lVert \sum_{i=1}^{P} z_i c_{i,\epsilon} \right \rVert_{L^2(\Omega_\epsilon)} \lVert \phi_\epsilon  \rVert _{L^2(\Omega_\epsilon)}  +C \epsilon \int_{\Gamma_\epsilon} | \phi_\epsilon  | \,dS(x)+ C\int_{\partial \Omega}| \phi_\epsilon  | \,dS(x) \nonumber\\
  && \leq C \lVert \phi_\epsilon \rVert _{L^2(\Omega_\epsilon)} + C \sqrt{\epsilon} \lVert \phi_\epsilon   \rVert _{L^2(\Gamma_\epsilon)} +C \lVert \phi_\epsilon   \rVert _{L^2(\partial  \Omega)};
 \end{eqnarray}
here in the last inequality we used that $\sum_{i=1}^{P} z_i c_{i,\epsilon}$ is  bounded in $L^\infty(0,T;L^2(\Omega_\epsilon))$ uniformly with respect to $\epsilon$ (see Proposition \ref{Thm: V_est} \& Corollary \ref{Remark_log_bound}) and $| \Gamma_\epsilon |=O\left (\frac{1}{\epsilon}\right)$. Comparing (\ref{energ_est_1}) to (\ref{phi_0_est_1}) with $\lambda =1$, we conclude that the estimate $\lVert  \nabla \phi_\epsilon (t)\rVert _{L^2(\Omega_\epsilon)}$ follows from similar arguments that we used to estimate for $\lVert  \nabla \phi_\epsilon (0)\rVert _{L^2(\Omega_\epsilon)}$ in Proposition \ref{Thm: V_est}.
Also, we have
\begin{equation}\label{energ_est_2}
\epsilon^\alpha \lVert \nabla \phi_\epsilon  \rVert _{L^\infty(0,T;L^2( \Omega_\epsilon))} \leq C,
\end{equation}
for some $C>0$ independent of $\epsilon$. Consequently, using (\ref{Lemma_mean_value}) and (\ref{energ_est_2}), we get
\begin{equation*}
\epsilon^\alpha \lVert  \phi_\epsilon \rVert _{L^\infty(0,T; L^2(\Omega_\epsilon))} \leq C.
\end{equation*}
Next we prove (\ref{Thm_energy_est_2}). Due to Corollary \ref{Remark_log_bound} we only have to estimate the norm of the gradient. We test the equation (\ref{Exist_1}) with $c_{i,\epsilon}$ and get
\begin{eqnarray*}
 &&<\partial_{t} c_{i,\epsilon} , c_{i,\epsilon} >_{H^1(\Omega_\epsilon)^\prime, H^1(\Omega_\epsilon)} + D_i \int_{\Omega_\epsilon} |\nabla c_{i,\epsilon}|^2   \,dx\\
 &&+ D_i \eta p \int_{\Omega_\epsilon}  c_{i,\epsilon}^{p-1}  | \nabla c_{i,\epsilon}|^2\,dx + \epsilon ^ \beta D_{i}z_{i}\int_{\Omega_\epsilon} c_{i,\epsilon} \nabla \phi_{\epsilon} \nabla c_{i,\epsilon} \,dx = 0.
\end{eqnarray*}
Since the third term of the above equation is non-negative, we have
\begin{equation}\label{energ_est_3}
 \frac{1}{2} \frac{d}{dt} \lVert c_{i,\epsilon} \rVert ^2_{L^2(\Omega_\epsilon)} + D_i \int_{\Omega_\epsilon} |\nabla c_{i,\epsilon}|^2   \,dx + \frac{1}{2} \epsilon ^{\beta} D_i z_i \int_{\Omega_\epsilon} \nabla \phi_{\epsilon} \nabla (c_{i,\epsilon}^2) \,dx \leq 0.
\end{equation}
Since $c_{i,\epsilon}(t) \in H^1(\Omega_\epsilon) \cap L^\infty (\Omega_\epsilon)$, we have $c^2_{i,\epsilon}(t) \in H^1(\Omega_\epsilon)$. We test Poisson's equation (\ref{weak_Poisson}) with $c^2_{i,\epsilon}(t)$ to obtain  
\begin{equation}\label{energ_est_4}
 \epsilon^\alpha \int_{\Omega_\epsilon} \nabla \phi_\epsilon  \nabla(c^2_{i,\epsilon}) \,dx = \int_{\Omega_\epsilon} c^2_{i,\epsilon} \sum_{i=1}^{P} z_i c_{i,\epsilon}  \,dx + \int_{\partial \Omega_\epsilon } \xi_\epsilon c^2_{i,\epsilon}\,dS(x).
 \end{equation}
 Now using (\ref{energ_est_4}) in (\ref{energ_est_3}) we get
 \begin{eqnarray}\label{energ_est_5}
  &&\frac{1}{2} \frac{d}{dt} \lVert c_{i,\epsilon} \rVert ^2_{L^2(\Omega_\epsilon)} + D_i \int_{\Omega_\epsilon} |\nabla c_{i,\epsilon}|^2   \,dx \nonumber\\
  &&\leq \frac{1}{2}\epsilon^{-\alpha+\beta} D_i  \left | z_i {\int_{\Omega_\epsilon} c^2_{i,\epsilon} \sum_{i=1}^{P} z_i c_{i,\epsilon}  \,dx} \right | + \frac{1}{2} \epsilon^{-\alpha+\beta} D_i  \left | z_i \int_{\partial \Omega_\epsilon } \xi_\epsilon c^2_{i,\epsilon}\,dS(x) \right |.\nonumber\\
 \end{eqnarray}
Let $C^\prime =\displaystyle \max_{1\leq i \leq P} |z_i|$. Then
\begin{eqnarray}\label{energ_est_6}
&&\left | {\int_{\Omega_\epsilon} c^2_{i,\epsilon} \sum_{i=1}^{P} z_i c_{i,\epsilon}  \,dx} \right | \leq  C^ \prime {\int_{\Omega_\epsilon} c^2_{i,\epsilon} \sum_{i=1}^{P}  c_{i,\epsilon}  \,dx} \leq  C^\prime\int_{\Omega_\epsilon} \left (\sum_{i=1}^{P} c_{i,\epsilon}\right )^3 \,dx\nonumber \\
&& \leq C^ \prime \left (\sum_{i=1}^P \lVert c_{i,\epsilon}\rVert _{L^3(\Omega_\epsilon)}\right )^3 \leq C, 
\end{eqnarray}
for some $C>0$ independent of $\epsilon$ and $t$. The last inequality in (\ref{energ_est_6}) follows from (\ref{EstLinftyLp}). So
\begin{equation}\label{energ_est_7}
  \frac{1}{2}\epsilon^{-\alpha+\beta} D_i  \left | z_i {\int_{\Omega_\epsilon} c^2_{i,\epsilon} \sum_{i=1}^{P} z_i c_{i,\epsilon}  \,dx} \right  | \leq \epsilon^{-\alpha+\beta} C.
\end{equation}
Again,
\begin{eqnarray}\label{energ_est_8}
 \left | \int_{\partial \Omega_\epsilon } \xi_\epsilon c^2_{i,\epsilon}\,dS(x) \right |  \leq \xi^* \epsilon \int_{\Gamma_\epsilon} c^2_{i,\epsilon} \,dS(x) + \xi^* \int_{\partial \Omega} c^2_{i,\epsilon} \,dS (x).
\end{eqnarray}
Using the scaled trace-inequality from Lemma \ref{TraceInequalityLemma} we get
\begin{equation}\label{energ_est_9}
 \int_{\Gamma_\epsilon} c^2_{i,\epsilon}dS(x) \leq C_1 \left ( \frac{1}{\epsilon} \int_{\Omega_\epsilon} c^2_{i,\epsilon} \,dx + \epsilon \int_{\Omega_\epsilon} |\nabla c_{i,\epsilon}|^2 \,dx \right).
\end{equation}
Next, we estimate the last term of (\ref{energ_est_8}) using the extension operator from Lemma \ref{lemma_extension} and the weighted trace inequality (see \cite[p. 63, Excercise II.4.1]{Gal11}) to obtain
\begin{eqnarray}\label{energ_est_10}
 \int_{\partial \Omega} c^2_{i,\epsilon} \,dS (x) =\int_{\partial \Omega} \widetilde{c_{i,\epsilon}}^2 \,dS (x) 
 \leq C_2 (\Omega,\delta)  \int_{\Omega} \widetilde{c_{i,\epsilon}}^2 \,dx + \delta \int_\Omega |\nabla \widetilde{c_{i,\epsilon}}|^2 \,dx \nonumber \\
  \leq C_3 \int_{\Omega_\epsilon} c^2_{i,\epsilon} \,dx + C_4\delta \int_{\Omega_\epsilon} |\nabla c_{i,\epsilon}|^2 \,dx 
\end{eqnarray}
for any $\delta >0$. Finally, utilizing (\ref{energ_est_10}), (\ref{energ_est_9}), (\ref{energ_est_7}) in (\ref{energ_est_5}), we get
\begin{eqnarray}\label{energ_est_11}
 &&\frac{1}{2} \frac{d}{dt} \lVert c_{i,\epsilon} \rVert ^2_{L^2(\Omega_\epsilon)} + D_i \int_{\Omega_\epsilon} |\nabla c_{i,\epsilon}|^2   \,dx \nonumber\\
&& \leq C \epsilon ^{-\alpha + \beta} + \frac{1}{2} \epsilon ^{-\alpha + \beta} D_i | z_i | \xi^* (C_1 \epsilon ^2 + C_4 \delta ) \int_{\Omega_\epsilon} |\nabla c_{i,\epsilon}|^2 \,dx \nonumber \\
&&+ \frac{1}{2} \epsilon ^{-\alpha + \beta} D_i | z_i | \xi^* (C_1  + C_3 ) \int_{\Omega_\epsilon}  c^2_{i,\epsilon} \,dx.
\end{eqnarray}
Because of (\ref{EstLinftyLp}), we have that the last term of (\ref{energ_est_11}) is bounded by $ C \epsilon^{-\alpha+ \beta}$, for some $C>0$ independent of $\epsilon$ and $t$. Again, for $z_i \neq 0$, if we choose $\epsilon$ and $\delta$ small enough so that 
\begin{eqnarray*}
 C_1 \epsilon^2 < \frac{1}{4 |z_i| \xi^*} , \ \ \ C_4 \delta < \frac{1}{4 |z_i|\xi^*},
\end{eqnarray*}
then we can absorb the gradient term by the left hand side. These arguments lead to
\begin{equation}\label{energ_est_12}
  \frac{d}{dt} \lVert c_{i,\epsilon} \rVert ^2_{L^2(\Omega_\epsilon)} + D_i \int_{\Omega_\epsilon} |\nabla c_{i,\epsilon}|^2   \,dx \leq  C \epsilon ^{-\alpha+ \beta},
\end{equation}
for some $C>0$ independent of $\epsilon$ and $t$. Integration with respect to time gives $\eqref{Thm_energy_est_2}$.
\end{proof}
Now, we are able to prove the uniform $L^\infty$-estimate for the microscopic solutions $c_{i,\epsilon}$ for $ i=1, \ldots P$. Our results are based on \cite[Theorem 6.1 and Remark 6.2]{Lad68}. An important aspect in the proof is the estimate of the drift term, where refined arguments are required in order to avoid the occurrence of  higher (than $H^{1}$) norms of the potential. 
\begin{theorem} \label{Thm_L_infty_est}
There exists a constant $C >0$ which is independent of $\epsilon$ such that
\begin{equation*}
\lVert c_{i, \epsilon} \rVert_{L^\infty((0,T) \times \Omega_\epsilon)} \leq C.
\end{equation*}
\end{theorem}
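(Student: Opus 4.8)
The plan is to run a De Giorgi level-set iteration for each species index $i$ separately, feeding in only the $\epsilon$-uniform bounds of Corollary~\ref{Remark_log_bound} and Proposition~\ref{Thm_energ_est_1}, and — this is the point emphasised in the introduction — removing $\nabla\phi_\epsilon$ from the drift term by testing Poisson's equation with powers of the truncation instead of invoking any regularity of $\phi_\epsilon$ beyond $H^1$. Fix $k\ge k_0:=\max_{1\le j\le P}\lVert c_j^0\rVert_{L^\infty(\Omega)}$, which is finite by (A3), and write $w_k:=(c_{i,\epsilon}-k)_+$; note $w_k(0)=0$ since $c_i^0\le k_0\le k$. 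Testing \eqref{Exist_1} with $\psi=w_k$ and using the chain rule for the time derivative (as in the proof of Proposition~\ref{Thm: V_est}), the first term becomes $\tfrac12\tfrac{d}{dt}\lVert w_k\rVert_{L^2(\Omega_\epsilon)}^2$; since $\nabla h_p(c_{i,\epsilon})=(1+\eta p\,c_{i,\epsilon}^{p-1})\nabla c_{i,\epsilon}$ and $\nabla c_{i,\epsilon}=\nabla w_k$ on $\{c_{i,\epsilon}>k\}$, the diffusion term dominates $D_i\lVert\nabla w_k\rVert_{L^2(\Omega_\epsilon)}^2$.

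For the drift term $\epsilon^\beta D_i z_i\int_{\Omega_\epsilon}c_{i,\epsilon}\nabla\phi_\epsilon\cdot\nabla w_k\,dx$ I would use $c_{i,\epsilon}=w_k+k$ on $\{c_{i,\epsilon}>k\}$ to rewrite it as $\epsilon^\beta D_i z_i\bigl(\tfrac12\int_{\Omega_\epsilon}\nabla\phi_\epsilon\cdot\nabla(w_k^2)\,dx+k\int_{\Omega_\epsilon}\nabla\phi_\epsilon\cdot\nabla w_k\,dx\bigr)$, and then substitute the weak Poisson equation \eqref{weak_Poisson} tested with $w_k^2$ and with $w_k$ (both admissible because for fixed $\epsilon$ one has $c_{i,\epsilon}\in L^\infty((0,T)\times\Omega_\epsilon)$, hence $w_k(t)\in H^1(\Omega_\epsilon)\cap L^\infty(\Omega_\epsilon)$). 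Since $\alpha\le\beta$ and $\epsilon\le 1$, the drift term is thereby bounded in modulus by $C\bigl(\int_{\Omega_\epsilon}\sum_j c_{j,\epsilon}(w_k^2+kw_k)\,dx+\int_{\partial\Omega_\epsilon}|\xi_\epsilon|(w_k^2+kw_k)\,dS\bigr)$ — purely zeroth order in $\phi_\epsilon$.

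It then remains to absorb these terms. The charge-density part is handled by Hölder with exponents $p$ and $p/(p-1)$ and $\lVert c_{j,\epsilon}\rVert_{L^\infty(0,T;L^p(\Omega_\epsilon))}\le C$: since $p\ge 4$ one has $2p/(p-1)\le 8/3<2(n+2)/n$ for $n\le 3$, so with $A_k(t):=\{c_{i,\epsilon}(t)>k\}$ the factor $\lVert w_k\rVert_{L^{2p/(p-1)}(A_k(t))}^2$ carries a positive power of $|A_k(t)|$ and, after Young's inequality, is controlled by the parabolic norm on the left via the Ladyzhenskaya interpolation inequality. The surface integral over $\Gamma_\epsilon$ (where $\xi_\epsilon=\epsilon\xi_1$) is estimated by the scaled trace inequality of Lemma~\ref{TraceInequalityLemma}, the factor $\epsilon$ compensating the $1/\epsilon$ and the gradient term being absorbed for $\epsilon$ small; the integral over $\partial\Omega$ is handled, as in the proof of Proposition~\ref{Thm_energ_est_1}, by the extension operator of Lemma~\ref{lemma_extension} and a weighted trace inequality with a small parameter, noting that the relevant boundary set is contained in $\{w_k>0\}$. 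All constants depend only on $\Omega,T,D_i,z_i,\xi^*,k_0,p$ and the uniform bounds, not on $\epsilon$. Integrating in time and using $w_k(0)=0$ yields
\[
\esssup_{t\in(0,T)}\lVert w_k(t)\rVert_{L^2(\Omega_\epsilon)}^2+\lVert\nabla w_k\rVert_{L^2((0,T)\times\Omega_\epsilon)}^2\le C\int_0^T\!\!\int_{A_k(t)}(1+k^2)\Bigl(1+\sum_{j=1}^P c_{j,\epsilon}\Bigr)\,dx\,dt ,
\]
so that $c_{i,\epsilon}$ lies in the De Giorgi class for the right-hand side $g:=1+\sum_j c_{j,\epsilon}\in L^\infty(0,T;L^p(\Omega_\epsilon))$ with $p>n/2$. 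Then \cite[Theorem 6.1 and Remark 6.2]{Lad68} gives $\esssup_{(0,T)\times\Omega_\epsilon}c_{i,\epsilon}\le k_0+C$ with $C$ independent of $\epsilon$, and $c_{i,\epsilon}\ge 0$ closes the argument.

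The step I expect to be the real obstacle is the drift term: arranging the decomposition $c_{i,\epsilon}=w_k+k$ together with the two Poisson test functions so that $\nabla\phi_\epsilon$ disappears, and then carrying out the bookkeeping so that every constant — in particular those arising from the $O(1/\epsilon)$-measure surface $\Gamma_\epsilon$ and from verifying $2p/(p-1)<2(n+2)/n$ — is genuinely uniform in $\epsilon$, which is what makes the Ladyzhenskaya criterion applicable with $\epsilon$-independent data.
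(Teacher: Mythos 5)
Your proposal takes essentially the same route as the paper: the crucial device — substituting the weak Poisson equation tested with $w_k^2$ and $kw_k$ into the drift term so that $\nabla\phi_\epsilon$ is eliminated and only the $L^\infty L^p$-bound on the charge density plus $H^1$-level information on $\phi_\epsilon$ are used — is exactly what the paper does, and the subsequent interpolation, scaled trace inequality on $\Gamma_\epsilon$, weighted trace on $\partial\Omega$ via the extension operator, and appeal to \cite[Theorem 6.1, Remark 6.2]{Lad68} all coincide. The one device you omit is the exponential weighting $W=e^{-\omega t}c_{i,\epsilon}$: in the paper, after all absorptions there remains a term of the form $C\epsilon^{-\alpha+\beta}\int_{\{W(t)>k\}}W^2\,dx$ on the right, and the $\omega$-weight produces the matching negative term $-\tfrac{\omega}{2}\int_{\{W(t)>k\}}W^2\,dx$ that removes it, with $\omega$ chosen $\epsilon$-independent since $\epsilon^{\beta-\alpha}\le 1$. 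In your version the analogous residual $C\int_{A_k(t)}w_k^2\,dx$ survives and is not absorbed by the Young step as you describe it — the positive power of $|A_k(t)|$ is not small uniformly in $k$ — so to reach the clean level-set inequality you write down before invoking Ladyzhenskaya you would still need a Gronwall argument in time (or the paper's $\omega$-trick); this is a standard repair, but it should be made explicit.
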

\begin{proof}
For $ u \in L^2 (0,T ; H^1(\Omega))$ with $ \partial_t u \in L^2(0,T;H^1 (\Omega)^\prime)$, where $\Omega$ is an arbitrary bounded open set, we define $ u ^+  (t,x): = \max 
\{u(t,x),0\}$. Then we have, see, e.g., \cite{Wac16},
\begin{equation}\label{L_infty_1}
 < \partial_t u(t), u^+(t)>_{H^1(\Omega)^\prime,H^1(\Omega)} = \frac{1}{2} \frac{d}{dt} \lVert u^+(t) \rVert^2_{L^2(\Omega)} \ \  \text{for a.e. $t \in (0,T)$.} 	
\end{equation}
Now, let $\omega \in (0, \infty)$ and $k \in [0,\infty)$ be arbitrary and set $W := e^{-\omega t} c_{i,\epsilon}$ and $W_k := (W-k)^+$. We test the equation (\ref{Exist_1}) with $e^{-\omega t}W_k$ to obtain 
\begin{eqnarray}\label{L_infty_3}
&&< \partial_t c_{i,\epsilon} , e^{-\omega t} W_k>_{H^1(\Omega_\epsilon)^\prime,H^1 (\Omega_\epsilon)}+ D_i  \int_{\Omega_\epsilon} h^\prime_p (c_{i,\epsilon}) \nabla c_{i,\epsilon} e^{-\omega t } \nabla W_k \,dx \nonumber \\
&&= - \epsilon ^\beta  D_i \int_{\Omega_\epsilon} z_i c_{i,\epsilon} \nabla \phi_\epsilon e^{-\omega t} \nabla W_k \,dx .
\end{eqnarray}
Due to 
\begin{equation*}\label{L_infty_2}
\partial_t (e^{-\omega t}c_{i, \epsilon}-k) = - \omega  e^{-\omega t}c_{i,\epsilon} + e^{-\omega t} \partial_t c_{i,\epsilon}, \ \ \text{for a.e. $t \in (0,T)$},
\end{equation*} 	
and \eqref{L_infty_1}, we have 
\begin{eqnarray}\label{L_infty_4}
	&&< \partial_t c_{i,\epsilon} , e^{-\omega t} W_k>_{H^1(\Omega_\epsilon)^\prime,H^1 (\Omega_\epsilon)} \nonumber \\
	&&= < \partial _t (e ^{-\omega t}c_{i,\epsilon}-k), W_k>_{H^1(\Omega_\epsilon)^\prime,H^1 (\Omega_\epsilon)} +\omega \int_{\Omega_\epsilon} e ^{-\omega t} c_{i,\epsilon} W_k \,dx  \nonumber \\
	&& = \frac{1}{2} \frac{d}{dt} \int_{\Omega_\epsilon} W^2_k \,dx +\omega \int_{\Omega_\epsilon} W W_k \,dx.
\end{eqnarray}
Now utilizing (\ref{L_infty_4}), $ h^\prime_p \geq 1$ in (\ref{L_infty_3}), we get
\begin{eqnarray}\label{L_infty_5}
&&\frac{1}{2} \frac{d}{dt} \int_{\Omega_\epsilon} W^2_k \,dx +D_i \lVert  \nabla W_k \rVert^2 _{L^2(\Omega_\epsilon)} \nonumber \\
&& \leq  -\epsilon ^ \beta D_i z_i \int_{\Omega_\epsilon} W \nabla W_k \nabla \phi_\epsilon \,dx - \omega \int_{\Omega_\epsilon} W W_k \,dx \nonumber \\
&& = -\epsilon ^ \beta D_i z_i \int_{\Omega_\epsilon} W \nabla W_k \nabla \phi_\epsilon \,dx - \omega \int _{\{W(t) >k\}} W^2 \,dx + \omega \int _{\{W(t) >k\}} W k \,dx \nonumber \\
&& \leq -\epsilon ^ \beta D_i z_i \int_{\Omega_\epsilon} W \nabla W_k \nabla \phi_\epsilon \,dx - \frac{\omega}{2} \int _{\{W(t) >k\}} W^2 \,dx + \frac{\omega}{2}\int _{\{W(t) >k\}} k^2 \,dx,\nonumber \\
\end{eqnarray}
where $\{W(t) >k\} := \{x \in \Omega_\epsilon : W (t,x) >k\}$.

First, we consider the first term of the right hand side of (\ref{L_infty_5}).
\begin{eqnarray} \label{L_infty_6}
&&-\epsilon ^ \beta D_i z_i \int_{\Omega_\epsilon} W \nabla W_k \nabla \phi_\epsilon \,dx  \nonumber \\
&& = -\epsilon ^ \beta D_i z_i \int_{\Omega_\epsilon} (W-k) \nabla W_k \nabla \phi_\epsilon \,dx - \epsilon^\beta D_i z_i k \int_{\Omega_\epsilon} \nabla W_k \nabla \phi_\epsilon \,dx \nonumber \\
&& = -\epsilon ^ \beta D_i z_i \int_{\Omega_\epsilon} W_k \nabla W_k \nabla \phi_\epsilon \,dx - \epsilon^\beta D_i z_i k \int_{\Omega_\epsilon} \nabla W_k \nabla \phi_\epsilon \,dx \nonumber \\
&& =\underbrace{- \frac{1}{2}\epsilon ^ \beta D_i z_i \int_{\Omega_\epsilon}  \nabla (W^2_k) \nabla \phi_\epsilon \,dx}_\text{$I_1$} \underbrace{ \ \ \ - \epsilon^\beta D_i z_i k \int_{\Omega_\epsilon} \nabla W_k \nabla \phi_\epsilon \,dx}_\text{$I_2$}.
\end{eqnarray}
Using $W_k^2$ as a test-function in the weak formulation of Poisson's-equation $\eqref{weak_Poisson}$, we obtain
\begin{align*}
I_1 = 
\underbrace{-\frac{1}{2} \epsilon^{ \beta - \alpha} D_i z_i \int_{\Omega_\epsilon } W^2_k  \sum_{i=1}^{P} z_i c_{i, \epsilon} \,dx}_\text{$A_1$} \ \ \ \underbrace{- \frac{1}{2} \epsilon ^{ \beta - \alpha} D_i z_i \int_{\partial \Omega_\epsilon } W^2_k  \xi_{\epsilon} dS (x).}_\text{$A_2$}
\end{align*}
The term $A_1$ can be estimated using Corollary \ref{Remark_log_bound} by
\begin{eqnarray*}
A_1 \leq \frac{1}{2} \epsilon ^{ \beta-\alpha} D_i |z_i | \lVert W_k \rVert ^2 _{L^4(\Omega_\epsilon)} \left \lVert  \sum_{i=1}^{P} z_i c_{i, \epsilon} \right \rVert _{L^2 (\Omega_\epsilon)} \leq C \epsilon ^{-\alpha + \beta}  \lVert W_k \rVert ^2 _{L^4(\Omega_\epsilon)}.
\end{eqnarray*}
Using the Gagliardo-Nirenberg-interpolation inequality (see, for example, \cite[Exercise II.3.12]{Gal11} or \cite{NirenbergEllipticEquations}) and Lemma  \ref{lemma_extension}, for any $ \delta_1 >0$, we obtain 
\begin{eqnarray}\label{L_infty_7}
A_1 &&\leq C \epsilon ^ {- \alpha + \beta} \lVert \widetilde{W_k} \rVert ^2 _{L^4(\Omega)} \nonumber \\
&& \leq C \epsilon ^ {- \alpha + \beta} \left (\delta_1 \lVert  \nabla \widetilde{W_k} \rVert ^2 _{L^2(\Omega)} + C (\delta_1, \Omega) \lVert \widetilde{W_k} \rVert ^2 _{L^2(\Omega)} \right ) \nonumber \\
&& \leq C \epsilon ^ {- \alpha + \beta}  \left (C_1 \delta_1 \lVert  \nabla W_k \rVert ^2 _{L^2(\Omega_\epsilon)}  +C_2(\delta_1) \lVert   W_k \rVert ^2 _{L^2(\Omega_\epsilon)}\right ).
\end{eqnarray}
Furthermore, for the term $A_2$ we get
\begin{eqnarray*}
A_2 
&& \leq C \xi^*  \epsilon ^ {-\alpha + \beta} \int_{\Gamma_\epsilon} \epsilon W^2_k \,dS (x)   + C \xi ^*  \epsilon ^ {-\alpha + \beta}  \int_{\partial \Omega} W^2_k \,d S (x)  \nonumber\\
&& \leq C \epsilon ^ {-\alpha + \beta } \left ( \int_{\Omega_\epsilon} W^2_k \,dx + \epsilon ^2   \lVert \nabla W_k \rVert^2_{L^2(\Omega_\epsilon)} \right ) +C \epsilon ^ {-\alpha + \beta } \int_{\partial \Omega} W_k^2;
\end{eqnarray*}
here in the last inequality we used again Lemma \ref{TraceInequalityLemma}. Consequently, by means of \cite[p. 63, Exercise II.4.1]{Gal11} and Lemma \ref{lemma_extension}, for any $\delta_2 >0$ we get   
\begin{eqnarray} \label{L_infty_8}
A_2 &&\leq C \epsilon ^ {-\alpha + \beta } \left ( \int_{\Omega_\epsilon} W^2_k \,dx + \epsilon ^2 \lVert \nabla W_k \rVert^2_{L^2(\Omega_\epsilon)} \right ) \nonumber \\
&& +C \epsilon ^ {-\alpha + \beta } \left (  \delta _2 \lVert \nabla W_k \rVert^2_{L^2(\Omega_\epsilon)}+ C_3(\delta_2) \int_{\Omega_\epsilon}  W^2_k  \,dx \right).
\end{eqnarray}
So (\ref{L_infty_7}), (\ref{L_infty_8}) imply 
\begin{eqnarray*} 
I_1 && \leq C \epsilon ^ {- \alpha + \beta } (\epsilon^2 + C_1 \delta_1 + \delta_2) \lVert \nabla W_k \rVert^2_{L^2(\Omega_\epsilon)} \nonumber\\
&&+  C \epsilon ^ {- \alpha + \beta }\left(C_2(\delta_1)+ C_3(\delta_2)+1\right) \int_{\Omega_\epsilon}  W^2_k  \,dx.
\end{eqnarray*}
Then, the inequality
\begin{equation*}
\int_{\Omega_\epsilon} W^2_k \,dx \leq 2 \left( \int_ {\{W(t) >k\}} W^2 \,dx + k^2 \int_ {\{W(t) >k\}} \,dx \right)
\end{equation*}
leads to 
\begin{eqnarray}\label{L_infty_9}
I_1 &&\leq C \epsilon ^ {- \alpha + \beta } (\epsilon^2 + C_1 \delta_1 + \delta_2) \lVert \nabla W_k \rVert^2_{L^2(\Omega_\epsilon)} \nonumber \\
&& +   C  \epsilon ^ {- \alpha + \beta } C_4 (\delta_1, \delta_2) \left( \int_ {\{W(t) >k\}} W^2 \,dx + k^2 \int_ {\{W(t) >k\}} \,dx \right).
\end{eqnarray}

To estimate $I_2$ from (\ref{L_infty_6}) we use $kW_k$ as a test-function in  Poisson's-equation $\eqref{weak_Poisson}$ to obtain
\begin{eqnarray}\label{L_infty_10}
I_2 = \underbrace{ -  \epsilon^{ \beta - \alpha} D_i z_i \int_{\Omega_\epsilon } k W_k \sum_{i=1}^{P} z_i c_{i, \epsilon} \,dx }_\text{$A_3$} \ \ \underbrace{-  \epsilon ^{ \beta - \alpha} D_i z_i \int_{\partial \Omega_\epsilon } k W_k  \xi_{\epsilon} dS (x).}_\text{$A_4$}
\end{eqnarray}
Using again Corollary \ref{Remark_log_bound} (remember $p \geq 4$) we immediately get
\begin{eqnarray}\label{L_infty_11}
A_3 
 \leq  C \epsilon^{-\alpha +\beta} \lVert k \rVert_{L^2(\{W(t)>k\})} \lVert W_k \rVert_{L^4(\Omega_\epsilon) }.
\end{eqnarray}
Now, using similar arguments used in the estimation of $A_1$, we have from (\ref{L_infty_11}) for any $\delta_3>0$,  
\begin{eqnarray*}
 A_3 \leq C \epsilon ^ {- \alpha + \beta} \left(  \int_{\{W(t)>k\}} k^2 \,dx + C_5 \delta_3  \lVert \nabla W_k \rVert^2_{L^2(\Omega_\epsilon)} + C_6 (\delta_3) \int_{\{W(t)>k\}}  W^2_k  \,dx  \right).
\end{eqnarray*}
This gives
\begin{eqnarray} \label{L_infty_12}
 A_3 &&\leq C \epsilon ^{-\alpha + \beta }C_5\delta_3  \lVert \nabla W_k \rVert^2_{L^2(\Omega_\epsilon)}\nonumber \\
 &&+C\epsilon ^{-\alpha + \beta}C_7 (\delta_3) \left( \int_{\{W(t)>k\}} W^2 \,dx + \int_{\{W(t)>k\}} k^2 \,dx \right ).
\end{eqnarray}
Now let us estimate $A_4$ from (\ref{L_infty_10}). We have
\begin{eqnarray}\label{L_infty_13}
 A_4 \leq C \xi ^* \epsilon^{-\alpha + \beta} \int_{\Gamma_\epsilon} \epsilon  k |W_k | \,dS(x) + C \xi ^* \epsilon^{-\alpha + \beta} \int_{\partial \Omega}  k |W_k | \,dS(x).
\end{eqnarray}
The scaled trace-inequality from Lemma \ref{TraceInequalityLemma} yields 
\begin{equation*}
 \int_{\Gamma_\epsilon} k |W_k| \,dS(x) \leq C \left ( \frac{1}{\epsilon}\int_{\Omega_\epsilon } k |W_k | \,dx +   \lVert k \nabla W_k \rVert _{L^1(\Omega_\epsilon)} \right ).
\end{equation*}
Also, \cite[p. 63, Exercise II.4.1]{Gal11} and Lemma \ref{lemma_extension} lead to 
\begin{equation*}
 \int_{\partial \Omega} k |W_k| \,dx \leq C_8 (\delta_4) \int_{\Omega_\epsilon} k |W_k| \,dx 
 + C_9 \delta_4 \lVert k \nabla W_k \rVert _{L^1(\Omega_\epsilon)},
\end{equation*}
for any $\delta_4 >0$.
Making use of these two inequalities in (\ref{L_infty_13}), we obtain
\begin{eqnarray*}
 A_4 \leq C \epsilon ^{-\alpha + \beta} (\epsilon +C_9  \delta_4) \lVert k \nabla W_k \rVert _{L^1(\Omega_\epsilon)} + C \epsilon ^{-\alpha + \beta}(1+C_8 (\delta_4)) \int_{ \Omega_\epsilon} k |W_k| \,dx.
\end{eqnarray*}
Hence,
\begin{eqnarray}\label{L_infty_14}
 A_4 &&\leq C \epsilon ^{-\alpha + \beta} (\epsilon + C_{9} \delta_4) \lVert  \nabla W_k \rVert^2 _{L^2(\Omega_\epsilon)} \nonumber \\
 &&+  C \epsilon ^{-\alpha + \beta} C_{10}(\delta_4) \left ( \int_{\{W(t)>k\}} W^2 \,dx + \int_{\{W(t)>k\}} k^2 \,dx \right).
\end{eqnarray}
Consequently, from (\ref{L_infty_12}), (\ref{L_infty_14}), we get
\begin{eqnarray}\label{L_infty_15}
 I_2 &&\leq C \epsilon ^ {-\alpha + \beta} (\epsilon +C_5 \delta_3 +C_{9} \delta_4)\lVert  \nabla W_k \rVert^2 _{L^2(\Omega_\epsilon)} \nonumber \\
 &&+ C \epsilon ^ {-\alpha + \beta}  C_{11}(\delta_3, \delta_4) \left (\int_{\{W(t)>k\}} W^2 \,dx + \int_{\{W(t)>k\}} k^2 \,dx \right).
 \end{eqnarray}
 Now using (\ref{L_infty_15}), (\ref{L_infty_9}) in (\ref{L_infty_5}), we get 
 \begin{eqnarray*}
  &&\frac{1}{2} \frac{d}{dt} \int_{\Omega_\epsilon}
  W^2_k \,dx +D_i \lVert  \nabla W_k \rVert^2 _{L^2(\Omega_\epsilon)} \\
  &&\leq C \epsilon ^{- \alpha + \beta} (\epsilon^2 +\epsilon+C_1 \delta_1 + \delta_2 + C_5 \delta_3 + C_{9} \delta_4 ) \lVert  \nabla W_k \rVert^2 _{L^2(\Omega_\epsilon)} \\
  &&+ C \epsilon ^{- \alpha + \beta} C_{12} (\delta_1, \delta_2, \delta_3, \delta_4)  \int_{\{W(t)>k\}} W^2 \,dx  \\
  &&+\left [C \epsilon ^{- \alpha + \beta} C_{13} (\delta_1, \delta_2, \delta_3, \delta_4) + \frac{\omega}{2} \right ]\int_{\{W(t)>k\}} k^2 \,dx 
- \frac{\omega}{2}\int_{\{W(t)>k\}} W^2 \,dx.
 \end{eqnarray*}
From the above equation we see that if we choose $\epsilon, \delta_1,...,\delta_4$ small enough so that $C \epsilon ^{- \alpha + \beta} (\epsilon^2 +\epsilon+C_1 \delta_1 + \delta_2 + C_5 \delta_3 + C_{9} \delta_4 ) < \frac{D_i}{2}$, then we can absorb the gradient term in the right hand side by the gradient term in the left hand side. Note that $\delta_1,...,\delta_4$ do not depend on $\epsilon$, whenever $\epsilon < \epsilon_0$, for some $\epsilon_0 >0$. Then, if we assume $\frac{\omega}{2} > C \epsilon^{-\alpha + \beta} C_{12} (\delta_1,..., \delta_4) $, we can remove the term $\int_{\{W(t)>k\}} W^2 \,dx$ from the right hand side. Therefore, we get
\begin{eqnarray}\label{L_infty_16}
 \frac{d}{dt} \int_{\Omega_\epsilon} W^2_k \,dx +  D_i \lVert  \nabla W_k \rVert^2 _{L^2(\Omega_\epsilon)}\leq (C \epsilon^{-\alpha + \beta} + \omega) \int_{\{W(t)>k\}} k^2 \,dx.
\end{eqnarray}
Choose $k \geq \lVert c^0_i\rVert_{L^\infty(\Omega)} + 1$, which implies $W_k(0) =0$. 
Now, we integrate (\ref{L_infty_16}) with respect to $t$ to obtain 
\begin{eqnarray*}
 \displaystyle \max_{t \in [0,T]} \int_{\Omega_\epsilon} W^2_k (t) \,dx + D_i \lVert  \nabla W_k \rVert^2 _{L^2((0,T) \times \Omega_\epsilon)} \leq (C \epsilon^{-\alpha + \beta} + \omega) k^2 \int_{\{W >k\}} \,dx \,dt,
\end{eqnarray*}
where $\{W > k\} := \{(t,x) \in (0,T) \times  \Omega_\epsilon : W(t,x) >k\}$.
Now utilizing \cite[p. 102, Theorem 6.1 and p. 103, Remark 6.2]{Lad68} with $r =q = \frac{2(n+2)}{n}, \ \kappa= \frac{2}{n} $, it follows that
\begin{equation}\label{L_infty_17}
 \esssup _{(t,x) \in (0,T) \times \Omega_\epsilon} W (t, x ) \leq C_{14} \left[1 + C_{15}(C_{16} \epsilon ^{-\alpha + \beta}+ \omega) ^{\frac{n+2}{4}}\right].
\end{equation}
We emphasize that the constants $C_{14}, C_{15}, C_{16}$ are independent of $\epsilon$. In fact, the proof of \cite[Theorem 6.1]{Lad68} is based on the embedding 
\begin{align*}
L^{\infty}((0,T),L^2(\Omega_{\epsilon}))\cap L^2((0,T),H^1(\Omega_{\epsilon})) \hookrightarrow L^r((0,T)\times \Omega_{\epsilon}).
\end{align*}
The embedding constant is independent of $\epsilon$, since due to the extension 
operator from Lemma \ref{lemma_extension} we easily obtain for any $u_\epsilon \in L^{\infty}((0,T),L^2(\Omega_{\epsilon}))\cap L^2((0,T),H^1(\Omega_{\epsilon}))$ that
\begin{align*}
 \left ( \int_0^T \int_{\Omega_\epsilon} |u_\epsilon|^r  \right )^{\frac{1}{r}} \leq c \left [  \esssup _{t \in (0,T)}\left ( \int_{\Omega_\epsilon} |u_\epsilon (t,x)|^2 \,dx \right)^{\frac{1}{2}} + \lVert \nabla u_\epsilon \rVert_{L^2((0,T)\times \Omega_\epsilon)} \right ],
\end{align*}
with a constant $c$ independent of $\epsilon$.
\\
 Consequently, from (\ref{L_infty_17}) we conclude that $W$ is bounded from above by some positive constant $C$ in $(0,T) \times \Omega_\epsilon$ uniformly with respect to $\epsilon$. Since $c_{i,\epsilon}$ are non-negative, the proof is complete.
\\
\end{proof}
\begin{proposition}\label{thm_time_der}
There exists a constant $C>0$ independent of $\epsilon$ such that 
\begin{equation}
\lVert \partial_{t}c_{i,\epsilon}\rVert_{L^2(0,T;H^1(\Omega_\epsilon)^\prime)} \leq C. 
\end{equation}
\end{proposition}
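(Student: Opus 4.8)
The plan is to read the bound directly off the weak formulation \eqref{Exist_1} by showing that the total flux $J_{i,\epsilon}$ (see \eqref{total_flux}) is bounded in $L^2((0,T)\times\Omega_\epsilon)$ uniformly in $\epsilon$. Indeed, for a.e. $t$ and every $\psi\in H^1(\Omega_\epsilon)$,
\begin{equation*}
\langle \partial_t c_{i,\epsilon}(t),\psi\rangle_{H^1(\Omega_\epsilon)',H^1(\Omega_\epsilon)}
= -\int_{\Omega_\epsilon}\big(D_i\nabla h_p(c_{i,\epsilon})+\epsilon^\beta D_i z_i c_{i,\epsilon}\nabla\phi_\epsilon\big)\cdot\nabla\psi\,dx
= -\int_{\Omega_\epsilon} J_{i,\epsilon}(t)\cdot\nabla\psi\,dx,
\end{equation*}
so by Cauchy--Schwarz $|\langle\partial_t c_{i,\epsilon}(t),\psi\rangle|\le \|J_{i,\epsilon}(t)\|_{L^2(\Omega_\epsilon)}\|\nabla\psi\|_{L^2(\Omega_\epsilon)}\le \|J_{i,\epsilon}(t)\|_{L^2(\Omega_\epsilon)}\|\psi\|_{H^1(\Omega_\epsilon)}$. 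Taking the supremum over $\psi$ with $\|\psi\|_{H^1(\Omega_\epsilon)}\le 1$ gives $\|\partial_t c_{i,\epsilon}(t)\|_{H^1(\Omega_\epsilon)'}\le \|J_{i,\epsilon}(t)\|_{L^2(\Omega_\epsilon)}$, and squaring and integrating in $t$ yields $\|\partial_t c_{i,\epsilon}\|_{L^2(0,T;H^1(\Omega_\epsilon)')}\le \|J_{i,\epsilon}\|_{L^2((0,T)\times\Omega_\epsilon)}$.

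For the diffusive part I would write $\nabla h_p(c_{i,\epsilon})=h_p'(c_{i,\epsilon})\nabla c_{i,\epsilon}=(1+\eta p\,c_{i,\epsilon}^{p-1})\nabla c_{i,\epsilon}$ using \eqref{def_h_p}. By Theorem \ref{Thm_L_infty_est} the concentrations are bounded in $L^\infty((0,T)\times\Omega_\epsilon)$ uniformly in $\epsilon$, hence so is $h_p'(c_{i,\epsilon})$; combined with the gradient bound $\|\nabla c_{i,\epsilon}\|_{L^2((0,T)\times\Omega_\epsilon)}\le C$ from \eqref{Thm_energy_est_2} this gives $\|D_i\nabla h_p(c_{i,\epsilon})\|_{L^2((0,T)\times\Omega_\epsilon)}\le C$ uniformly. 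For the drift part I would again use the uniform $L^\infty$ bound on $c_{i,\epsilon}$ together with $\epsilon^\alpha\|\nabla\phi_\epsilon\|_{L^\infty(0,T;L^2(\Omega_\epsilon))}\le C$ from \eqref{Thm_energy_est_1}, obtaining
\begin{equation*}
\epsilon^\beta\|D_i z_i c_{i,\epsilon}\nabla\phi_\epsilon\|_{L^2((0,T)\times\Omega_\epsilon)}
\le D_i|z_i|\,\|c_{i,\epsilon}\|_{L^\infty((0,T)\times\Omega_\epsilon)}\,\sqrt{T}\,\epsilon^{\beta-\alpha}\,\epsilon^\alpha\|\nabla\phi_\epsilon\|_{L^\infty(0,T;L^2(\Omega_\epsilon))}
\le C\,\epsilon^{\beta-\alpha},
\end{equation*}
which is bounded uniformly in $\epsilon$ since $\alpha\le\beta$ and $\epsilon$ is small. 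Adding the two contributions bounds $\|J_{i,\epsilon}\|_{L^2((0,T)\times\Omega_\epsilon)}$ uniformly, completing the proof.

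I do not expect any genuine obstacle here: the statement is an immediate consequence of the a priori estimates already established in Theorem \ref{Thm_L_infty_est} and Proposition \ref{Thm_energ_est_1}. The only point that deserves a line of care is the bookkeeping of the powers of $\epsilon$ in the drift term, where the factor $\epsilon^\beta$ from the mobility must be paired with the factor $\epsilon^{-\alpha}$ implicit in the available bound on $\nabla\phi_\epsilon$, so that the remaining power $\epsilon^{\beta-\alpha}$ stays bounded precisely because of the standing assumption $\alpha\le\beta$.
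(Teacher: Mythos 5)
Your proposal is correct and follows essentially the same route as the paper's proof: bound $\partial_t c_{i,\epsilon}$ by the $L^2$-norm of the flux, control the diffusive part via the uniform $L^\infty$-bound on $c_{i,\epsilon}$ (hence on $h_p'(c_{i,\epsilon})$) together with the $L^2$-gradient estimate, and control the drift part by pairing $\epsilon^\beta$ against the $\epsilon^\alpha$-weighted bound on $\nabla\phi_\epsilon$, using $\alpha\le\beta$. The only difference is cosmetic: you package the flux as $J_{i,\epsilon}$ and bound it in $L^2((0,T)\times\Omega_\epsilon)$ before applying duality, while the paper estimates the two terms directly against an arbitrary test function $\psi\in L^2(0,T;H^1(\Omega_\epsilon))$.
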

\begin{proof}
 Let $\psi \in L^2(0,T;H^1(\Omega_\epsilon))$. From (\ref{Exist_1}) we have
 \begin{eqnarray*}
 && \int_0^T < \partial_{t} c_{i,\epsilon}, \psi>_{H^1(\Omega_\epsilon)^\prime, H^1(\Omega_\epsilon)} \,dt\\
 && \leq D_i \int_0^T  \int_{\Omega_\epsilon} (1+ \eta c_{i,\epsilon}^{p-1}) | \nabla c_{i,\epsilon}| |\nabla \psi| \,dx \,dt + \epsilon^\beta D_i |z_i | \int_0^T  \int_{\Omega_\epsilon} c_{i,\epsilon} |\nabla \phi_\epsilon| |\nabla \psi| \,dx \,dt \\
 && \leq C \lVert \nabla c_{i,\epsilon} \rVert_{L^2(0,T;L^2(\Omega_\epsilon))} \lVert \psi \rVert_{L^2(0,T;H^1(\Omega_\epsilon))}+C \epsilon^\beta \lVert \nabla \phi_\epsilon \rVert_{L^2(0,T;L^2(\Omega_\epsilon))} \lVert \psi \rVert_{L^2(0,T;H^1(\Omega_\epsilon))}\\
 &&  \leq C (1+ \epsilon^{-\alpha+\beta} )\lVert \psi \rVert_{L^2(0,T;H^1(\Omega_\epsilon))};
 \end{eqnarray*}
here we used Proposition \ref{Thm_energ_est_1} and Theorem  \ref{Thm_L_infty_est}. This completes the proof.
\end{proof}


\section{Derivation of macroscopic model}
\label{Deriv_macro_model}

In this section, we derive the macroscopic (homogenized) drift-diffusion model by means of two-scale convergence concepts introduced in \cite{Ngu89, All92, Neu96}.  The two-scale convergence results are based on the uniform \textit{a priori} estimates for the microscopic solutions $c_{i,\epsilon}, i=1, \ldots,P$ and $\phi_{\epsilon}$ from Section \ref{Sect_unif_est}.
\begin{definition}[\cite{All92}]
A sequence of functions $u_\epsilon$ in $L^2( (0,T) \times \Omega)$ is said to two-scale converge to a function $u_0 \in L^2 ((0,T) \times \Omega \times Y )$, if for all $\psi \in L^2 ((0,T) \times \Omega ; C_{per}(\overline{Y}))$ the following holds:
\begin{eqnarray*}
\lim_{\epsilon \to 0} \int_{0}^{T} \int_{\Omega} u_\epsilon (t,x) \psi \left (t,x, \frac{x}{\epsilon}\right) \,dx \,dt = \int_{0}^{T} \int_{\Omega} \int_Y u_0 (t,x,y) \psi (t,x,y) \,dy \,dx \,dt .
\end{eqnarray*}
\end{definition}
The following compactness result is well known:
For every bounded sequence $u_{\epsilon} \in L^2((0,T),H^1(\Omega))$ there exists $u_0 \in L^2((0,T),H^1( \Omega))$ and $u_1 \in L^2((0,T)\times \Omega , H^1_{per}(Y)/\R)$ such that up to a subsequence
\begin{align*}
u_{\epsilon} &\rightarrow u_0 &\mbox{ in the two-scale sense,}
\\
\nabla u_{\epsilon} &\rightarrow \nabla u_0 + \nabla_y u_1 &\mbox{ in the two-scale sense}.
\end{align*}

A proof can be found in \cite{All92} (see also \cite{Ngu89}) for the time-independent case, and can be easily generalized to  time-dependent problems. Note, however, that in our problem the microscopic solutions are only defined on the perforated domain $\Omega_{\epsilon}$, and to apply the above two-scale compactness results we have to extend $c_{i,\epsilon}$ and $\phi_{\epsilon}$ in a suitable way to the whole domain $\Omega$. In a trivial way we can extend a function $f: \Omega_{\epsilon} \rightarrow \R$ by zero to $\Omega$ (this extension is denoted by $\chi_{\Omega_{\epsilon}} f$, and $\chi_{\Omega_{\epsilon}} $ denotes the characteristic function of $\Omega_{\epsilon}$). For the zero-extension, the regularity properties are in general not preserved. These are, however, the basis for obtaining strong compactness results, which allow to pass to the limit in the nonlinear terms. Hence, we extend the functions $c_{i,\epsilon}$ by using the extension operator $\widetilde{c_{i,\epsilon}}$ from Lemma \ref{lemma_extension}, pointwise in $t\in (0,T)$. It is well known, see, for example, \cite{Cio79} and \cite{Acerbi1992}, that  $\widetilde{c_{i,\epsilon}}$ fulfills the same regularity results and uniform \textit{a priori } estimates as $c_{i,\epsilon}$ in Proposition \ref{Thm_energ_est_1} and Theorem \ref{Thm_L_infty_est}. In Lemma \ref{lemma_extension}, we additionally prove that $\widetilde{c_{i,\epsilon}}$ is also essentially bounded and non-negative. These properties are needed to pass to the limit in the microscopic model, in particular in the nonlinear term $h_p(c_{i,\epsilon})$ entering the diffusion coefficients. Finally, we emphasize that we have no information about $\partial_t \widetilde{c_{i,\epsilon}}$, but this is not necessary for the derivation of the macroscopic model.  

In the following proposition, we prove convergence results for the microscopic solutions. Here, the characteristic function on $Y^f$ is denoted by $\chi_{Y^f}$.

\begin{proposition}\label{Thm_conv_micro_sol}
	\begin{enumerate}[label=(\alph*)]
	\item There exist $c_{i,0} \in L^\infty((0,T) \times \Omega) \cap L^2(0,T;H^1(\Omega))$, $c_{i,0} \geq 0$ almost everywhere in $(0,T) \times \Omega$ with $\partial_t c_{i,0} \in L^2(0,T;H^1(\Omega)^\prime)$ and $c_{i,1} \in L^2((0,T) \times \Omega; H^1_{per} (Y^f)/ \R)$ such that for every $q\in [1,\infty)$, up to a subsequence  
		\begin{align}
		& \rchi_{\Omega_\epsilon} c_{i,\epsilon} \rightarrow \rchi_{Y^f}  c_{i,0}   &&\text{in the two-scale sense}, \label{THM_CONV_1}\\
		& \rchi_{\Omega_\epsilon} \nabla c_{i,\epsilon} \rightarrow \rchi_{Y^f}  \left (\nabla c_{i,0} + \nabla _y c_{i,1}  \right )  &&\text{in the two-scale sense},\label{THM_CONV_2}\\
		& \widetilde{c_{i,\epsilon}} \rightarrow c_{i,0} &&\text{strongly in $L^q((0,T)\times \Omega)$},
		\\
		& h_p^{\prime}\left(\widetilde{c_{i,\epsilon}}\right) \rightarrow h_p^{\prime}\left(c_{i,0}\right) &&\text{strongly in } L^q((0,T)\times \Omega).
		\end{align}
	\item There exists $\phi_0 \in L^2 (0,T; H^1(\Omega))$ and $\phi_1 \in L^2 ((0,T) \times \Omega;H^1_{per}(Y^f)/ \R)$ such that, up to a subsequence  \begin{align}
	& \epsilon^\alpha \rchi_{\Omega_\epsilon} \phi_{\epsilon} \rightarrow \rchi_{Y^f} \phi_{0}   &&\text{in the two-scale sense}, \label{THM_CONV_3}\\
	& \epsilon^\alpha \rchi_{\Omega_\epsilon} \nabla \phi_{\epsilon} \rightarrow \rchi_{Y^f} \left (\nabla \phi_{0} + \nabla _y \phi_{1}  \right )  &&\text{in the two-scale sense}\label{THM_CONV_4}. 
	\end{align}	
\end{enumerate}
\end{proposition}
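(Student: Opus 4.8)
The plan is to obtain all four pairs of convergences from the uniform \textit{a priori} estimates of Section~\ref{Sect_unif_est}, combined with the properties of the extension operator of Lemma~\ref{lemma_extension} and the two-scale compactness result stated above. First I would treat part~(b), which is the easier one. By Proposition~\ref{Thm_energ_est_1} the functions $\epsilon^\alpha \widetilde{\phi_\epsilon}$ (extension of $\epsilon^\alpha\phi_\epsilon$) are bounded in $L^\infty(0,T;H^1(\Omega))$, hence in $L^2(0,T;H^1(\Omega))$; applying the two-scale compactness result to this bounded sequence yields $\phi_0 \in L^2(0,T;H^1(\Omega))$ and $\phi_1 \in L^2((0,T)\times\Omega;H^1_{per}(Y)/\R)$ with $\epsilon^\alpha\widetilde{\phi_\epsilon}\to\phi_0$ and $\epsilon^\alpha\nabla\widetilde{\phi_\epsilon}\to\nabla\phi_0+\nabla_y\phi_1$ two-scale. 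Multiplying the test functions by $\chi_{\Omega_\epsilon}$ and using that $\chi_{\Omega_\epsilon}\to\chi_{Y^f}$ weakly$-*$, one converts these into \eqref{THM_CONV_3}--\eqref{THM_CONV_4}; the restriction of $\phi_1$ to $H^1_{per}(Y^f)/\R$ is harmless since only its gradient on $Y^f$ enters. (No $\partial_t$-bound is needed here, consistent with the remark in the text.)

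For part~(a), the two-scale convergences \eqref{THM_CONV_1}--\eqref{THM_CONV_2} follow in the same way: by Theorem~\ref{Thm_L_infty_est} and Proposition~\ref{Thm_energ_est_1} (applied to the extensions, which inherit the same bounds by \cite{Cio79,Acerbi1992}) the sequence $\widetilde{c_{i,\epsilon}}$ is bounded in $L^2(0,T;H^1(\Omega))$ and in $L^\infty((0,T)\times\Omega)$, so the two-scale compactness result gives $c_{i,0}\in L^2(0,T;H^1(\Omega))$ and $c_{i,1}\in L^2((0,T)\times\Omega;H^1_{per}(Y)/\R)$ with $\widetilde{c_{i,\epsilon}}\to c_{i,0}$ and $\nabla\widetilde{c_{i,\epsilon}}\to\nabla c_{i,0}+\nabla_y c_{i,1}$ two-scale; testing against $\chi_{\Omega_\epsilon}$-modulated functions then yields \eqref{THM_CONV_1}--\eqref{THM_CONV_2}. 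That $\partial_t c_{i,0}\in L^2(0,T;H^1(\Omega)')$ and $c_{i,0}\geq 0$ a.e.\ I would get from the strong convergence of $\widetilde{c_{i,\epsilon}}$ together with the uniform bound $\|\partial_t c_{i,\epsilon}\|_{L^2(0,T;H^1(\Omega_\epsilon)')}\leq C$ of Proposition~\ref{thm_time_der}: passing to the limit in \eqref{Exist_1} identifies the distributional time derivative of $c_{i,0}$ as an element of the dual space, and nonnegativity passes to the limit from $c_{i,\epsilon}\geq 0$ (or from the nonnegativity of $\widetilde{c_{i,\epsilon}}$ asserted in Lemma~\ref{lemma_extension}). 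The $L^\infty$-bound then gives $c_{i,0}\in L^\infty((0,T)\times\Omega)$.

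The main obstacle is the \emph{strong} convergence $\widetilde{c_{i,\epsilon}}\to c_{i,0}$ in $L^q((0,T)\times\Omega)$ for every $q\in[1,\infty)$, which is what makes the nonlinear terms pass to the limit. For this I would invoke an Aubin--Lions--Simon type compactness argument on the \emph{extended} functions: $\widetilde{c_{i,\epsilon}}$ is bounded in $L^2(0,T;H^1(\Omega))$, and although we have no bound on $\partial_t\widetilde{c_{i,\epsilon}}$, we do control $\partial_t c_{i,\epsilon}$ in $L^2(0,T;H^1(\Omega_\epsilon)')$; the standard device (see e.g.\ \cite{Gah21} for a similar perforated-domain setting) is to test with functions supported away from the perforations, or equivalently to use that the extension commutes suitably with time differentiation on a fixed reference cell, to deduce relative compactness of $\widetilde{c_{i,\epsilon}}$ in $L^2((0,T)\times\Omega)$. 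Upgrading from $q=2$ to arbitrary $q<\infty$ is then immediate by interpolation between the $L^2$-strong convergence and the uniform $L^\infty$-bound of Theorem~\ref{Thm_L_infty_est}. Finally, the strong convergence of $h_p'(\widetilde{c_{i,\epsilon}})=1+\eta p\,\widetilde{c_{i,\epsilon}}^{\,p-1}$ in $L^q$ follows from the strong $L^q$-convergence of $\widetilde{c_{i,\epsilon}}$ together with its uniform $L^\infty$-bound (continuity of $r\mapsto r^{p-1}$ on the bounded interval $[0,C]$, plus dominated convergence after passing to an a.e.-convergent subsequence).
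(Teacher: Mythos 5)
Your strategy coincides with the paper's: obtain the two-scale limits from the $\epsilon$-uniform estimates, get strong $L^2((0,T)\times\Omega)$-convergence of the extensions $\widetilde{c_{i,\epsilon}}$, upgrade to $L^q$ for all $q<\infty$ via pointwise a.e.\ convergence and the uniform $L^\infty$-bound of Theorem~\ref{Thm_L_infty_est}, and deduce the strong convergence of $h_p'(\widetilde{c_{i,\epsilon}})$ from the local Lipschitz continuity of $h_p'$ on a bounded interval. The one place your sketch is not sound is precisely the crucial step --- the strong $L^2$-convergence of $\widetilde{c_{i,\epsilon}}$ --- which the paper does not prove itself but delegates to \cite[Lemmas 9 and 10]{Gah16}.

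The two devices you suggest for this step, ``testing with functions supported away from the perforations'' and ``the extension commutes suitably with time differentiation'', do not work as stated: there is no uniform bound on $\partial_t\widetilde{c_{i,\epsilon}}$, and the extension operator of Lemma~\ref{lemma_extension} does not act on $H^1(\Omega_\epsilon)'$, so the bound on $\partial_t c_{i,\epsilon}$ in $L^2(0,T;H^1(\Omega_\epsilon)')$ from Proposition~\ref{thm_time_der} cannot simply be ``extended''. The mechanism that \cite{Gah16} actually exploits is that the extension operator, being linear and independent of $t$, commutes with time \emph{translations} rather than with $\partial_t$: $\widetilde{c_{i,\epsilon}}(\cdot+h,\cdot)=\widetilde{c_{i,\epsilon}(\cdot+h,\cdot)}$, hence $\Vert\widetilde{c_{i,\epsilon}}(\cdot+h)-\widetilde{c_{i,\epsilon}}\Vert_{L^2((0,T-h)\times\Omega)}\le C\Vert c_{i,\epsilon}(\cdot+h)-c_{i,\epsilon}\Vert_{L^2((0,T-h)\times\Omega_\epsilon)}$. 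The right-hand side is then controlled uniformly in $\epsilon$ and $h$ by combining the $L^2(0,T;H^1(\Omega_\epsilon)')$-bound on $\partial_t c_{i,\epsilon}$ with the $L^2(0,T;H^1(\Omega_\epsilon))$-bound from Proposition~\ref{Thm_energ_est_1} via the Gelfand-triple inequality $\Vert v\Vert_{L^2(\Omega_\epsilon)}^2\le\Vert v\Vert_{H^1(\Omega_\epsilon)}\Vert v\Vert_{H^1(\Omega_\epsilon)'}$ (whose constant is $\epsilon$-independent), and Simon's compactness criterion yields relative compactness of $\widetilde{c_{i,\epsilon}}$ in $L^2((0,T)\times\Omega)$. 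If you meant this time-translation argument, state it in that form; as written, the passage conflates commutation with $\partial_t$ (false) and commutation with $\tau_h$ (true), and resolving exactly that subtlety is the content of the lemma the paper cites.
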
	
\begin{proof}
The two-scale convergence results are quite standard (see, for example, \cite{All92} for more details) and follow from the \textit{a priori } estimates in Proposition \ref{Thm_energ_est_1}. 
It remains to prove the strong convergence results for the extension. 
Using again the \textit{a priori } estimates for $c_{i,\epsilon}$, we obtain from  \cite[Lemma 9 and Lemma 10]{Gah16} the strong convergence of $\widetilde{c_{i,\epsilon}}$ to $c_{i,0}$ in $L^2((0,T)\times \Omega)$ (up to a subsequence). This implies, again up to a subsequence, pointwise almost everywhere convergence in $(0,T) \times \Omega$. Together with the $L^{\infty}$-estimate from Theorem \ref{Thm_L_infty_est} and the dominated convergence theorem we obtain the convergence of $\widetilde{c_{i,\epsilon}}$ in $L^q((0,T)\times \Omega)$ for every $q \in [1,\infty)$. The lower semi-continuity of the norm implies $c_{i,0} \in L^{\infty}((0,T)\times \Omega)$, see also \cite[Exercise 4.6]{Bre11}. For the convergence of $h_p^{\prime}\left(\widetilde{c_{i,\epsilon}}\right)$ we use the local Lipschitz continuity of $h_p^{\prime}$ and again the essential boundedness of $ \widetilde{c_{i,\epsilon}}$, which implies together with Lemma \ref{lemma_extension}
\begin{align*}
\left\Vert h_p^{\prime}\left(\widetilde{c_{i,\epsilon}}\right) - h_p^{\prime}(c_{i,0})\right\Vert_{L^q((0,T)\times \Omega)} \le  C \Vert \widetilde{c_{i,\epsilon}} - c_{i,0}\Vert_{L^q((0,T)\times \Omega)} \rightarrow 0
\end{align*}
for $\epsilon \to 0$. This finishes the proof.
\end{proof}
We now have all ingredients to pass to the limit $\epsilon \to 0$ in the variational formulation \eqref{Exist_1}-\eqref{weak_Poisson} and to obtain the macroscopic models for different values of the parameters $\alpha$ and $\beta$.
\begin{theorem}\label{Thm_macro_model}
\begin{enumerate} [label=(\alph*)]    	    	
		\item Let $\alpha = \beta$. The limit functions $c_{i,0}$ and $\phi_0$ from Proposition \ref{Thm_conv_micro_sol} are weak solutions of the following homogenized model:
		\begin{equation}\label{Macro_eq_1}
		\begin{aligned}
		&& \partial_t c_{i,0} - \nabla_x \cdot \left[ D_i A_{hom} \nabla h_p(c_{i,0}) + D_i z_i c_{i,0}A_{hom} \nabla \phi_0 \right  ] &=0  &\text{in $(0,T) \times \Omega$}, \\
		&& -\left[ D_i A_{hom} \nabla h_p(c_{i,0}) + D_i z_i c_{i,0}A_{hom} \nabla \phi_0 \right  ] \cdot \nu &=0   &\text{on $(0,T) \times \partial \Omega$},\\
		&&c_{i,0}(0,x) &= c_i^0 (x)  &\text{in $\Omega$},
		\end{aligned}
		\end{equation}
		for all $i \in \{1,...,P\}$, and
		\begin{equation}\label{Macro_eq_2}
		\begin{aligned}
		&&-\nabla_x \cdot \left [A_{hom} \nabla \phi_0 (t,x) \right ]&= \sum_{k=1}^{P} z_i c_{i,0} (t,x) + \frac{1}{\left | Y^f \right | } \int_{\Gamma} \xi_1 (x,y) \,dy   &\text{in $(0,T) \times \Omega$}, \\
		&& A_{hom} \nabla \phi_0 (t,x)  \cdot \nu &= \frac{1}{\left | Y^f \right | } \xi_2 (x) &\text{on $(0,T) \times  \partial \Omega$}.
		\end{aligned}
		\end{equation}
		Here $A_{hom}$ is an $n \times n$ matrix given by
		\begin{equation}\label{Macro_eq_2a_0}
		A_{hom} e_k = \frac{1}{\left | Y^f \right | } \int_{Y^f} \left (\nabla_y w_k + e_k \right) \,dy, \ \ \text{for $1\leq k \leq n$,} 
		\end{equation}
where $e_k$ denotes the standard basis vector of $\mathbb{R}^n$ and 
$w_k \in H^1_{per} (Y^f)/ \mathbb{R}$ is the unique weak solution of the so-called cell problem
\begin{equation}\label{Macro_eq_2a}
\begin{aligned}
-\nabla_y \cdot \left ( \nabla_y w_k (y) +e_k\right) &=0 \ \ &\text{in $Y^f$},\\
\left ( \nabla_y w_k (y) +e_k\right) \cdot \nu  &=0  \ \ &\text{on $\partial Y^f$},\\
y \mapsto w_k (y) \ \ &\text{is $Y$-periodic.}
\end{aligned}
\end{equation}

\item If $\alpha < \beta$, then the homogenized Poisson's equation (\ref{Macro_eq_2}) remains the same, however the advective term $D_i z_i c_{i,0}A_{hom} \nabla \phi_0$ in the homogenized transport equation (\ref{Macro_eq_1}) does not appear. That is, in that case (\ref{Macro_eq_1}) becomes 
\begin{equation}\label{Macro_eq_3}
\begin{aligned}
&&  \partial_t c_{i,0} - \nabla_x \cdot  D_i A_{hom} \nabla h_p(c_{i,0})  &=0  &\text{in $(0,T) \times \Omega$}, \\
&& -D_i A_{hom} \nabla h_p(c_{i,0})  \cdot \nu &=0   &\text{on $(0,T) \times \partial \Omega$},\\
&&c_{i,0}(0,x) &= c_i^0 (x)  &\text{in $\Omega$}.
\end{aligned}
\end{equation}
\end{enumerate}
\end{theorem}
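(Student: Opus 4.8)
The plan is to pass to the limit $\epsilon\to 0$ in the variational identities \eqref{Exist_1} and \eqref{weak_Poisson}, using oscillating test functions of the form $\psi_\epsilon(t,x)=\psi_0(t,x)+\epsilon\psi_1\!\left(t,x,\tfrac{x}{\epsilon}\right)$ with $\psi_0\in C^\infty([0,T]\times\overline\Omega)$ (and $\psi_0(T,\cdot)=0$ in the transport equation) and $\psi_1\in C^\infty([0,T]\times\overline\Omega;C^\infty_{\mathrm{per}}(\overline Y))$, followed by a density argument; here $\nabla\psi_\epsilon=\nabla_x\psi_0+\nabla_y\psi_1(\cdot,\cdot,\tfrac{\cdot}{\epsilon})+\epsilon\nabla_x\psi_1(\cdot,\cdot,\tfrac{\cdot}{\epsilon})$ and the last summand is $O(\epsilon)$ in $L^\infty$. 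Since $|Y|=1$, all two-scale limits are normalized by $|Y^f|$, which is the porosity factor that is divided out of the final limit identities; the no-flux condition on $\partial\Omega$ in \eqref{Macro_eq_1}, \eqref{Macro_eq_3} and the Neumann condition in \eqref{Macro_eq_2} are encoded in the weak formulation because $\psi_0$ need not vanish on $\partial\Omega$. The general scheme is: pass to the limit term by term using Proposition \ref{Thm_conv_micro_sol}, then \emph{decouple} the resulting two-scale identity by first taking $\psi_0=0$ (which produces the cell problems and the correctors $\phi_1$, $c_{i,1}$) and then $\psi_1=0$ (which produces the macroscopic equations).

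For Poisson's equation I would integrate \eqref{weak_Poisson} over $(0,T)$ against $\psi_\epsilon$ and rewrite the left-hand side as $\int_0^T\!\int_\Omega \epsilon^\alpha\rchi_{\Omega_\epsilon}\nabla\phi_\epsilon\cdot\nabla\psi_\epsilon$, which by \eqref{THM_CONV_4} converges to $\int_0^T\!\int_\Omega\!\int_{Y^f}(\nabla\phi_0+\nabla_y\phi_1)\cdot(\nabla_x\psi_0+\nabla_y\psi_1)$. On the right-hand side, the bulk term converges by \eqref{THM_CONV_1} to $\int_0^T\!\int_\Omega |Y^f|\sum_i z_i c_{i,0}\,\psi_0$; the surface term $\epsilon\int_{\Gamma_\epsilon}\xi_1(x,\tfrac{x}{\epsilon})\psi_\epsilon\,dS$ converges, by the standard periodic surface (boundary two-scale) convergence for $|\Gamma_\epsilon|=O(1/\epsilon)$, to $\int_0^T\!\int_\Omega\!\int_{\Gamma}\xi_1(x,y)\psi_0\,dS_y\,dx\,dt$ (the $\epsilon\psi_1$ part being $O(\epsilon)$), and the term on $\partial\Omega$ converges to $\int_0^T\!\int_{\partial\Omega}\xi_2\psi_0\,dS\,dt$. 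Taking $\psi_0=0$ gives $-\nabla_y\cdot(\nabla\phi_0+\nabla_y\phi_1)=0$ in $Y^f$ with no-flux on $\Gamma$ and $Y$-periodicity, so $\phi_1=\sum_{k=1}^n\partial_{x_k}\phi_0\,w_k$ with $w_k$ from \eqref{Macro_eq_2a}, whence $\int_{Y^f}(\nabla\phi_0+\nabla_y\phi_1)\,dy=|Y^f|A_{hom}\nabla\phi_0$ by \eqref{Macro_eq_2a_0}; taking $\psi_1=0$ and dividing by $|Y^f|$ yields the weak form of \eqref{Macro_eq_2}. This part does not involve $\beta$, which is why it is the same in cases (a) and (b).

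For the transport equation I would treat the four terms separately. The time-derivative term: integrating by parts in time with $\psi_0(T,\cdot)=0$, $\int_0^T\langle\partial_t c_{i,\epsilon},\psi_0\rangle\,dt=-\int_0^T\!\int_\Omega\rchi_{\Omega_\epsilon}\widetilde{c_{i,\epsilon}}\,\partial_t\psi_0-\int_\Omega\rchi_{\Omega_\epsilon}c_i^0\psi_0(0)$, which converges (using $\widetilde{c_{i,\epsilon}}\to c_{i,0}$ strongly in $L^2$ and $\rchi_{\Omega_\epsilon}\rightharpoonup|Y^f|$ weakly-$\ast$) to $|Y^f|\big(-\int_0^T\!\int_\Omega c_{i,0}\partial_t\psi_0-\int_\Omega c_i^0\psi_0(0)\big)$; comparison with the limit identity also gives $c_{i,0}(0)=c_i^0$, while the $\epsilon\psi_1$-contribution vanishes since every term left after integrating by parts in time carries an explicit $\epsilon$ against bounded quantities. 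The nonlinear diffusion term: writing $\nabla h_p(c_{i,\epsilon})=h_p'(c_{i,\epsilon})\nabla c_{i,\epsilon}$ and $\rchi_{\Omega_\epsilon}h_p'(c_{i,\epsilon})=\rchi_{\Omega_\epsilon}h_p'(\widetilde{c_{i,\epsilon}})$, the strong $L^q$-convergence (for all $q<\infty$) and uniform boundedness of $h_p'(\widetilde{c_{i,\epsilon}})$ from Proposition \ref{Thm_conv_micro_sol}, combined with \eqref{THM_CONV_2}, give two-scale convergence of $\rchi_{\Omega_\epsilon}h_p'(c_{i,\epsilon})\nabla c_{i,\epsilon}$ to $\rchi_{Y^f}h_p'(c_{i,0})(\nabla c_{i,0}+\nabla_y c_{i,1})$. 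The drift term: on $\Omega_\epsilon$, $\epsilon^\beta c_{i,\epsilon}\nabla\phi_\epsilon=\epsilon^{\beta-\alpha}\,\widetilde{c_{i,\epsilon}}\,(\epsilon^\alpha\nabla\phi_\epsilon)$; if $\alpha=\beta$ the prefactor is $1$ and the product of the strongly convergent $\widetilde{c_{i,\epsilon}}$ with the two-scale convergent $\epsilon^\alpha\rchi_{\Omega_\epsilon}\nabla\phi_\epsilon$ of \eqref{THM_CONV_4} converges two-scale to $\rchi_{Y^f}c_{i,0}(\nabla\phi_0+\nabla_y\phi_1)$, whereas if $\alpha<\beta$ the factor $\epsilon^{\beta-\alpha}\to0$ against a bounded quantity and the term drops out — the only difference between (a) and (b). Taking $\psi_0=0$ in the limit identity, and using that $z_i c_{i,0}(\nabla\phi_0+\nabla_y\phi_1)$ is already $y$-divergence-free with vanishing normal flux on $\Gamma$ while $h_p'(c_{i,0})>0$ is $y$-independent, one sees $c_{i,1}$ solves the same cell problem as the potential, so $c_{i,1}=\sum_{k=1}^n\partial_{x_k}c_{i,0}\,w_k$ and $\int_{Y^f}h_p'(c_{i,0})(\nabla c_{i,0}+\nabla_y c_{i,1})\,dy=|Y^f|A_{hom}\nabla h_p(c_{i,0})$, $\int_{Y^f}c_{i,0}(\nabla\phi_0+\nabla_y\phi_1)\,dy=|Y^f|c_{i,0}A_{hom}\nabla\phi_0$; taking $\psi_1=0$ and dividing by $|Y^f|$ gives \eqref{Macro_eq_1} (resp.\ \eqref{Macro_eq_3}).

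The main obstacle is the passage to the limit in the two nonlinear terms — the diffusion flux $h_p'(c_{i,\epsilon})\nabla c_{i,\epsilon}$ and, when $\alpha=\beta$, the drift $c_{i,\epsilon}\,\epsilon^\alpha\nabla\phi_\epsilon$ — where a two-scale convergent sequence (a gradient, known only weakly in this sense) must be multiplied by a sequence that converges merely strongly in $L^q$; this is exactly where the uniform $L^\infty$-bound of Theorem \ref{Thm_L_infty_est} and the strong convergence of the extensions $\widetilde{c_{i,\epsilon}}$ and of $h_p'(\widetilde{c_{i,\epsilon}})$ from Proposition \ref{Thm_conv_micro_sol} are indispensable, and where admissibility of the test function $h_p'(c_{i,0})\psi_1$ in the two-scale sense must be checked. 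The remaining work — tracking the scalings $\epsilon^\alpha,\epsilon^\beta$, the porosity factor $|Y^f|$, and the distinction between the surface limits on $\Gamma_\epsilon$ and on $\partial\Omega$ — is routine.
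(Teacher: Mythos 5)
Your proposal follows essentially the same route as the paper: oscillating test functions $\psi_0+\epsilon\psi_1(\cdot,\cdot,\cdot/\epsilon)$, passage to the limit term by term with the strong $L^q$-convergence of $\widetilde{c_{i,\epsilon}}$ and of $h_p'(\widetilde{c_{i,\epsilon}})$ handling the nonlinear diffusion and (when $\alpha=\beta$) drift products, the surface two-scale convergence for the $\Gamma_\epsilon$-integral, and the decoupling $\psi_0=0$ / $\psi_1=0$ giving the correctors $\phi_1=\sum_k\partial_{x_k}\phi_0\,w_k$, $c_{i,1}=\sum_k\partial_{x_k}c_{i,0}\,w_k$ and then \eqref{Macro_eq_1}, \eqref{Macro_eq_2}, \eqref{Macro_eq_3}. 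The one place where you compress what the paper spells out is the statement that the product of the strongly convergent scalar with the two-scale convergent gradient itself two-scale converges: the paper makes this rigorous by splitting $\int h_p'(c_{i,\epsilon})\nabla c_{i,\epsilon}\nabla\psi_\epsilon$ into $\int\bigl[h_p'(c_{i,\epsilon})-h_p'(c_{i,0})\bigr]\nabla c_{i,\epsilon}\nabla\psi_\epsilon$ (killed by strong convergence and the uniform gradient bound) plus $\int h_p'(c_{i,0})\nabla c_{i,\epsilon}\nabla\psi_\epsilon$ (where $h_p'(c_{i,0})\nabla\psi_\epsilon$ is an admissible two-scale test function), but that is precisely the standard lemma you are invoking, and you flag the admissibility issue explicitly, so there is no substantive gap.
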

\begin{proof}
We start by proving statement \textit{(a)}. Let $\psi_0 (t,x) \in C^\infty([0,T] \times \overline{\Omega})$ with $\psi_0(T,.)=0$ and $\psi_1(t,x,y) \in C^\infty_0((0,T) \times \Omega \times \overline{Y})$, which is $Y$-periodic in $y$. Let us consider a subsequence of $\epsilon$, still denoted by $\epsilon$, along which the convergence results given in Proposition \ref{Thm_conv_micro_sol} hold. Now considering $\psi_{\epsilon}(t,x):= \psi_0(t,x) + \epsilon \psi_1 \left (t, x, \frac{x}{\epsilon}\right)$ as a test function in (\ref{Exist_1}), we get
\begin{eqnarray}\label{macro_1}
&&-\int_0^T \int_{\Omega_\epsilon} c_{i, \epsilon} \left [ \partial_t \psi_0 (t,x) + \epsilon \partial_t \psi_1 \left (t, x , \frac{x}{\epsilon}\right) \right ] \,dx \,dt \nonumber \\
&&+ D_i \int_{0}^{T} \int_{\Omega_\epsilon}  h_p^{\prime}(c_{i,\epsilon})\nabla c_{i, \epsilon} \left[ \nabla \psi_0 (t,x) + \epsilon \nabla_x \psi_1 \left(t,x,\frac{x}{\epsilon}\right) + \nabla_y \psi_1 \left(t,x,\frac{x}{\epsilon}\right) \right] \,dx \,dt \nonumber \\
&&+ \epsilon ^ \beta D_i \int_{0}^{T} \int_{\Omega_\epsilon} z_i c_{i, \epsilon} \nabla \phi_\epsilon \left[ \nabla \psi_0 (t,x) + \epsilon \nabla_x \psi_1 \left(t,x,\frac{x}{\epsilon}\right) + \nabla_y \psi_1 \left(t,x,\frac{x}{\epsilon}\right) \right] \,dx \,dt \nonumber \\
&& = \int_{\Omega_\epsilon} c^0_i (x) \psi_0 (0,x)  \,dx.
\end{eqnarray}
Let us first pass to the limit in the second term in the left-hand side of (\ref{macro_1}). Passing to the limit in the other terms will follow from simpler arguments. We have,
\begin{align*}
D_i \int_0^T \int_{\Omega_{\epsilon}}  h_p^{\prime}(c_{i,\epsilon}) \nabla c_{i,\epsilon} \nabla \psi_{\epsilon} dx =& D_i \int_0^T \int_{\Omega_{\epsilon}}   \left[ h_p^{\prime}(c_{i,\epsilon})-  h_p^{\prime}(c_{i,0})\right] \nabla c_{i,\epsilon} \nabla \psi_{\epsilon} dx 
\\
&+ D_i \int_0^T \int_{\Omega_{\epsilon}} h_p^{\prime}(c_{i,0})\nabla c_{i,\epsilon} \nabla \psi_{\epsilon} dx =: A_{\epsilon}^1 + A_{\epsilon}^2
\end{align*}
For the first term we use the strong convergence of $h_p^{\prime}\left(\widetilde{c_{i,\epsilon}}\right)$ from Proposition \ref{Thm_conv_micro_sol} and the boundedness of $\nabla c_{i,\epsilon}$ from Proposition \ref{Thm_energ_est_1} (we emphasize that $\nabla \psi_{\epsilon}$ is essentially bounded):
\begin{align*}
\vert A_{\epsilon}^1 \vert \le C \left\Vert h_p^{\prime}\left(\widetilde{c_{i,\epsilon}}\right) - h_p^{\prime}(c_{i,0})\right\Vert_{L^2((0,T)\times \Omega)} \overset{\epsilon \to 0}{\longrightarrow} 0.
\end{align*}
For the term $A_{\epsilon}^2$ we notice that $h^{\prime}_p(c_{i,0}) \nabla \psi_{\epsilon}$ is an admissible test-function in the two-scale sense. Hence, we can use the two-scale convergence result for $\chi_{\Omega_{\epsilon}} \nabla c_{i,\epsilon}$ from  Proposition \ref{Thm_conv_micro_sol} to obtain 
\begin{align*}
\lim_{\epsilon \to 0 } A_{\epsilon}^2 = D_i \int_0^T \int_{\Omega} \int_{Y^f} h^{\prime}_p(c_{i,0})\left(\nabla c_{i,0} + \nabla_y c_{i,1}\right) \left(\nabla \psi_0 (t,x)+ \nabla_y \psi_1 (t,x,y)\right) \,dy \,dx \,dt.
\end{align*}
The other terms in $\eqref{macro_1}$ can be treated in a similar way and we obtain for $\epsilon \to 0$
\begin{eqnarray}\label{macro_3}
&&-\left | Y^f \right | \int_0^T \int_{\Omega} c_{i,0} \partial_t \psi_0 \,dx \,dt 
\\
&&+D_i   \int_0^T \int_{\Omega} \int_{Y^f}  h_p^{\prime}( c_{i,0})\left(\nabla c_{i,0} + \nabla_y c_{i,1}\right) \left(\nabla \psi_0 + \nabla_y \psi_1 \right) \,dy \,dx \,dt \nonumber \\
&& +D_i z_i \int_0^T \int_{\Omega} \int_{Y^f}   c_{i,0} \left(\nabla \phi_0 + \nabla_y \phi_1 \right) \left(\nabla \psi_0 + \nabla_y \psi_1 \right)  \,dy \,dx \,dt  \nonumber \\
&& = \left | Y^f \right |  \int_{\Omega} c^0_i \psi_0(0,x) \,dx.
\end{eqnarray}
Choosing $\psi_1=0$ we obtain
that $c_{i,0}(0)=c_i^0$. Integrating by parts in time in $\eqref{macro_3}$ and using a density argument, we obtain for all $\psi_0 \in L^2(0,T; H^1(\Omega))$ and $ \psi_{1} \in L^2((0,T)  \times \Omega ; H^1_{per}(Y^f)/\R)$ that
\begin{align}
\begin{aligned}\label{macro_4}
&\left | Y^f \right | \int_0^T \left< \partial_t c_{i,0},  \psi_0 \right>_{H^1(\Omega)^\prime, H^1(\Omega)} \,dt \\
&+D_i \int_0^T \int_{\Omega} \int_{Y^f}   h_p^{\prime}(c_{i,0})\left(\nabla c_{i,0} + \nabla_y c_{i,1}\right) \left(\nabla \psi_0 + \nabla_y \psi_1 \right) \,dy \,dx \,dt  \\
& +D_i z_i \int_0^T \int_{\Omega} \int_{Y^f}   c_{i,0} \left(\nabla \phi_0 + \nabla_y \phi_1 \right) \left(\nabla \psi_0 + \nabla_y \psi_1 \right)  \,dy \,dx \,dt =0. 
\end{aligned}
\end{align}
(\ref{macro_4}) is the so-called two-scale homogenized model for the transport equation. 

Now considering $\psi_{\epsilon}(t,x):= \psi_0(t,x) + \epsilon \psi_1 \left (t, x, \frac{x}{\epsilon}\right)$ as a test function in \eqref{weak_Poisson}, by similar arguments (see \cite{Neu96} for the convergence of the integral over the microscopic boundary $\Gamma_\epsilon$), we obtain the two-scale homogenized Poisson's equation given as follows:
For all $\psi_0 \in L^2((0,T);H^1(\Omega))$ and $\psi_1 \in L^2((0,T) \times \Omega ; H^1_{per}(Y^f)/\R)$ it holds that
\begin{eqnarray}\label{macro_6}
&&\int_0^T \int_{\Omega} \int_{Y^f} \left ( \nabla \phi_0 + \nabla_y \phi_1 \right) \left( \nabla \psi_0 + \nabla _y \psi_1\right) \,dy \,dx \,dt\nonumber \\
&& = \left | Y^f \right | \int_{0}^{T} \int_{\Omega} \sum_{i=1}^{P} z_i c_{i,0} \psi_0 \,dx \,dt + \int_{0}^{T} \int_{\Omega} \int_{\Gamma} \xi_1 (x,y) \psi_0 (t,x) \,dS(y) \,dx \,dt  \nonumber \\
&&  + \int_{0}^{T} \int_{\partial \Omega} \xi_2 (x) \psi_0 (t,x) \,dS(x) \,dt.  
\end{eqnarray}
Choosing $\psi_0=0$ in (\ref{macro_6}), we get
\begin{equation}\label{macro_7}
\int_{0}^{T} \int_\Omega \int_{Y^f} [ \nabla \phi_0 + \nabla_{y} \phi_1] \cdot \nabla_{y}\psi_1 \,dy \,dx \,dt = 0. 
\end{equation}
It is well-known, see, for example, \cite{All92}, that for a given $\phi_0 \in L^2((0, T), H^1(\Omega))$ this problem admits an unique weak solution $\phi_1 \in L^2((0,T)\times \Omega; H_{per}^1(Y^f)/ \R)$ which has the representation
\begin{equation}\label{macro_7a}
\phi_{1}(t,x,y) = \displaystyle \sum_{k=1}^n  \partial_{x_k} \phi_{0}(t,x) w_k(y),
\end{equation}
 where $w_k$ is the unique weak solution of (\ref{Macro_eq_2a}). Now, choosing $\psi_0 = 0$ in the two-scale homogenized transport equation $\eqref{macro_4}$ and using the representation for $\phi_1$, we obtain with similar arguments as above (we emphasize  that $h^{\prime}_p(c_{i,0})\geq 1$)
\begin{align*}
c_{i,1}(t,x,y) = \displaystyle \sum_{k=1}^n \partial_{x_k} c_{i,0}(t,x) w_k(y).
\end{align*}
Choosing $\psi_1 = 0$ in  $\eqref{macro_6}$, we obtain with a standard calculation
\begin{eqnarray*}
&&\int_{0}^{T} \int_{\Omega} A_{hom} \nabla \phi_0 \nabla \psi_0 \,dx \,dt = \int_{0}^{T} \int_{\Omega} \sum_{i=1}^{P} z_i c_{i,0} \psi_0 \,dx \,dt\\
&&+\frac{1}{\left | Y^f \right | } \int_{0}^{T} \int_{\Omega} \int_{\Gamma} \xi_1 (x,y) \psi_0 (t,x) \,dS(y) \,dx \,dt + \frac{1}{\left | Y^f \right | }\int_{0}^{T} \int_{\partial \Omega} \xi_2 (x) \psi_0 (t,x) \,dS(x) \,dt.  
\end{eqnarray*}
This is the  weak formulation of $(\ref{Macro_eq_2})$. With similar arguments we obtain from $\eqref{macro_4}$ with $\psi_1 = 0$ the weak formulation for $\eqref{Macro_eq_1}$. This completes the proof of (a).  
\\

The proof of (b) follows easily by noting that the last term of the left hand side of (\ref{macro_1}) goes to $0$ in the limit as $\epsilon \to 0$, for the case $\alpha < \beta$.
\\
\end{proof}

\begin{remark}\label{remark_generalizations}
Let us mention that the setting of the microscopic model cannot be extended very much (except the addition of a reaction term on the right hand side of the transport equation \eqref{non_dim_NP_eq}) without increasing the effort in the homogenization problem substantially. In fact, a more general setting leads to difficulties to control the solutions of the microscopic system explicitly with respect to the scale parameter $\epsilon$. 
\smallskip

(i) Existence for a model involving a Robin boundary condition (instead of a Neumann boundary condition) for $\phi_\epsilon$ was considered, e.g., in \cite{Both14}. However, for such a boundary condition we are not able to prove $\epsilon$-uniform $L^\infty$-estimates for the concentrations. More precisely, the Robin boundary condition induces in the proof of Theorem \ref{Thm_L_infty_est}  an additional boundary integral of the form 
$$
\int_{\partial \Omega_\epsilon} k W_k \phi_\epsilon \,dS (x),
$$
see formula \eqref{L_infty_10}, and it is not clear how to control this term. 

We also remark that our method cannot be extended in an obvious way to the case when $\phi_\epsilon$ satisfies a Dirichlet boundary condition. More precisely, one problem arises in the proof of the energy estimates from Proposition \ref{Thm_energ_est_1}, where $c_{i,\epsilon}^2$ is used as a test function for Poisson's equation (see formula \eqref{energ_est_4}). In the case of Dirichlet boundary conditions for the potential, $c_{i,\epsilon}^2$ is not an admissible test function in the weak sense and leads to additional boundary terms which we are not able to control uniformly with respect to $\epsilon$.
\smallskip
 
(ii) Considering a time dependent $\xi_\epsilon$ in \eqref{non_dim_bc_2}, sufficiently regular with respect to time and satisfying
\begin{eqnarray}
\frac{d}{dt} \int_{\partial \Omega_\epsilon} \xi_\epsilon (t,x) \,dS (x) = 0 \ \text{for a.e. $t \in (0,T)$}, \label{cond_xi_1}\\
\int_{\Omega_\epsilon} \sum_{i=1}^{P} z_i c^0_{i} (x) \,dx+ \int_{\partial \Omega_\epsilon} \xi_\epsilon (0,x) \,dS(x) = 0 \label{cond_xi_2},
\end{eqnarray}
the existence result given in Proposition \ref{thm_exist} remains valid for all $\epsilon>0$ fixed. Here, the assumptions \eqref{cond_xi_1} and \eqref{cond_xi_2} ensure that the required compatibility condition \eqref{existence_2} is satisfied. 

However, the time derivative $\partial_{t} \xi_\epsilon$ causes difficulties in the estimation of time derivative of the energy functional $V_\epsilon(t)$ in Proposition \ref{Thm: V_est}, which is the basis for the a priori estimates needed for the homogenization limit. Indeed, an additional term 
$$
\epsilon ^ \beta \int_{\partial \Omega_\epsilon} \partial_{t} \xi_{\epsilon} {\phi_\epsilon}dS(x)
$$ 
appears in \eqref{V_est_8}. A possibility to control this term would be by restricting the values of the parameters $\alpha, \beta$ and by considering a suitable scaling on $\partial_{t} \xi_\epsilon$ with respect to $\epsilon^{\alpha}$. 
\smallskip

(iii) If the diffusion coefficients are assumed non-constant, namely $D_i \in C^2 ([0,T] \times \overline{\Omega})$ and there exist positive constants $m,M$ such that $m < D_i (t,x)< M$ for all $(t,x) \in [0,T] \times \overline{\Omega}$, then, again,  the existence result in Proposition \ref{thm_exist} remains valid for all fixed $\epsilon>0$. However, problems arise in the homogenization process, especially in the proof of Theorem \ref{Thm_L_infty_est} giving the uniform $L^\infty$-estimate for the concentrations. More precisely, integrating by parts in the term $I_2$ from formula \eqref{L_infty_6}
and using Poisson's equation for $\phi_\epsilon$, we obtain an additional term 
$$
\epsilon ^ {-\alpha+ \beta} z_i \int_{\Omega_\epsilon} \nabla D_i \epsilon ^ \alpha \nabla \phi_\epsilon kW_k \,dx.
$$  
This term can be estimated by 
$$
C \epsilon ^ {-\alpha + \beta} \epsilon^\alpha \lVert \nabla \phi_\epsilon \rVert_{L^4(\Omega_\epsilon)}\lVert k \rVert_{L^2(\{W(t)>k\})}\lVert W_k \rVert_{L^4 (\Omega_\epsilon)}.
$$ 
Now, estimating $\lVert \nabla \phi_\epsilon \rVert_{L^4(\Omega_\epsilon)}$ via the $H^2$-norm of $\phi_\epsilon$ leads to unfavorable $\epsilon$-dependent constants.
\smallskip

(iv) Concerning the reaction terms in \eqref{non_dim_NP_eq}, we emphasize that in \cite{Both14} such terms were considered and regular solutions were obtained for the approximate problem involving the nonlinear diffusion. Using similar assumptions on reaction terms and adding a condition of the form
$$
\sum_{i=1}^{P}  z_i f_i (t,x,s)  = 0 \ \ \forall (t,x,s) \in [0,T] \times \overline{\Omega} \times \mathbb{R}^P
$$
in order to guarantee the compatibility condition \eqref{existence_2} needed to cope with the Neumann problem for the electric potential, we can show that the solutions of the microscopic model have the same regularity properties as those of the approximate system in \cite{Both14}. Furthermore, these reaction terms preserve the a priori estimates and we are able to pass to the limit also in these terms. However, since homogenization processes for semilinear problems are by now rather standard, we have chosen to focus on the nonlinearities arising in the higher order terms like the nonlinear diffusion and the drift term.
\end{remark}


\section{Discussion and outlook}
\label{Discussion_and_outlook}

We performed a rigorous homogenization of a drift-diffusion model for multiple species with nonlinear (non-degenerate) diffusion terms involving the nonlinear function $h_p(r) = r + \eta r^p$, with $\eta >0$ and $p\in [4, \infty)$. This nonlinear problem raises  difficulties for the homogenization procedure. In particular, for getting uniform estimates of the solutions with respect to the scale parameter $\epsilon$, it is necessary to exploit the structure of the system, which admits a nonnegative energy functional  decreasing in time along solutions of the model. Combining such arguments with more classical energy estimates, uniform \textit{a priori} estimates of the microscopic solutions could be established, and a homogenized model could be derived for different scalings of the microscopic problem. 

The microscopic model as well as the homogenized (macroscopic) model both depend on the parameter $\eta>0$. In \cite{Both14} it has been shown that fixing all other parameters of the model, in the limit $\eta \to 0$, the solutions of the nonlinear drift-diffusion model \eqref{non_dim_PNP}-\eqref{def_h_p} converge to a solution of a Poisson-Nernst-Planck (PNP) system (formally obtained by setting $\eta =0$), thus leading to the following diagram: 
\begin{equation*}
\begin{xy}
  \xymatrix{
      u_{\epsilon, \eta} \quad \ar[r]^{ \eta \to 0, \ \cite{Both14} } \ar[d]_{\epsilon \to 0}    &   \quad u_{\epsilon, 0} \ar[d]^{\epsilon \to 0 \, (?)}   \\
      u_{0,\eta} \quad \ar[r]_{\quad \quad \eta \to 0 \,  (?) \quad \quad}              &   \quad u_{0,0}  
  }
\end{xy}
\end{equation*}
Here, $u_{\epsilon, \eta}$ represents the solution of the nonlinear drift-diffusion system \eqref{non_dim_PNP}-\eqref{def_h_p}, $u_{\epsilon, 0}$ represents the solution of a PNP system, and $u_{0, \eta}$ represents the solution to the homogenized model for fixed parameter $\eta$ derived in Theorem \ref{Thm_macro_model}. If we consider the case $\alpha = \beta$, the homogenized system for $u_{0, \eta}$ is again of the form \eqref{non_dim_PNP}-\eqref{def_h_p}, however with matrix-valued constant coefficients. Thus, by similar arguments like in \cite{Both14}, we may assume that we can pass to the limit $\eta \to 0$ to obtain a PNP model with solution $u_{0,0}$. The question which now arises is whether this limit problem can also be obtained by passing to the limit $\epsilon \to 0$ in the PNP-problem for $u_{\epsilon,0}$ (see diagram). Unfortunately, the convergence for $\eta \to 0$ takes place in function spaces with weaker regularity, and thus, the uniform (with respect to $\epsilon$) estimates of the solutions $u_{\epsilon, \eta}$ cannot be transferred to the limit $u_{\epsilon, 0}$ and cannot be used for passing to the limit in the microscopic PNP model. An alternative approach could be to show uniform error estimates (with respect to suitable norms) for the other three convergences in the diagram, which is a highly demanding aim to be addressed in future investigations.


\begin{appendix}\label{appendix}
\section{Auxiliary results}
\label{Preparatory_results}
In this section we prove some auxiliary results which have been used to obtain the uniform estimates for the microscopic solutions  $(c_{i,\epsilon}, \phi_\epsilon)$ in Section \ref{Sect_unif_est} and for the derivation of the macroscopic model in Section \ref{Deriv_macro_model}.  We start with an approximation result.
\begin{lemma}\label{lemma_approx}
Let $ \Omega$ be a bounded open set in $\R^n$ and let $ u \in L^2(0,T; H^1(\Omega)) \cap L^\infty ((0,T) \times \Omega ), u \geq 0$ almost everywhere in $(0,T) \times \Omega$  with $\partial_{t} u \in L^2(0,T;H^1(\Omega)^\prime)$. For each fixed $\delta >0$, there exists a sequence $\psi_m$
in $C^\infty ([0,T] \times \overline{\Omega})$ 
which satisfies  for all $m$
\begin{align*}
 	\psi_m (t,x) \in \left[- \frac{\delta}{2}, \lVert u \rVert_{L^\infty ((0,T) \times \Omega)} + \frac{\delta}{2}\right] \quad \mbox{ for all } \  (t,x) \in   [0,T] \times \overline{\Omega},
\end{align*}
and such that
\begin{enumerate}
[label = (\roman*)]
\item\label{ConvergencePsimL2H1} $\psi_m$ converges to $u$ in $L^2(0,T;H^1(\Omega))$,
\item\label{ConvergencedtPsim}  for every sequence of functions $\upsilon_m$ bounded in $L^2((0,T),H^1(\Omega))$ and $L^{\infty}((0,T)\times \Omega)$ with $\upsilon_m \rightarrow \upsilon$, $\nabla \upsilon_m \rightarrow \nabla \upsilon$ almost everywhere in $(0,T)\times \Omega$ for some $\upsilon \in L^2((0,T),H^1(\Omega))$, it holds that
\begin{align*}
\lim_{m \to \infty} \int_0^T \langle \partial_t \psi_m , \upsilon_m \rangle_{H^1(\Omega)',H^1(\Omega)}dt = \int_0^T \langle \partial_t u, \upsilon\rangle_{H^1(\Omega)',H^1(\Omega)}dt.
\end{align*}
\end{enumerate}
 \end{lemma}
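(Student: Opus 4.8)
The plan is to obtain $\psi_m$ by regularizing $u$ in three stages --- mollification in time, mollification in space, and a final smooth truncation restoring the sup-bounds --- organized so that the whole verification collapses to the single assertion that $\psi_m\to u$ in the graph-norm space $W:=\{v\in L^2(0,T;H^1(\Omega)):\partial_t v\in L^2(0,T;H^1(\Omega)')\}$. Indeed, suppose $\psi_m\in C^\infty([0,T]\times\overline\Omega)$ has been constructed with values in $[-\delta/2,\,M+\delta/2]$, where $M:=\|u\|_{L^\infty((0,T)\times\Omega)}$, and $\psi_m\to u$ in $W$. Then \ref{ConvergencePsimL2H1} is immediate, and for \ref{ConvergencedtPsim} I would split
\begin{align*}
\int_0^T\langle\partial_t\psi_m,\upsilon_m\rangle\,dt
&=\int_0^T\langle\partial_t\psi_m-\partial_t u,\upsilon_m\rangle\,dt
+\int_0^T\langle\partial_t u,\upsilon_m\rangle\,dt .
\end{align*}
The first integral is bounded by $\|\partial_t\psi_m-\partial_t u\|_{L^2(0,T;H^1(\Omega)')}\,\sup_m\|\upsilon_m\|_{L^2(0,T;H^1(\Omega))}\to0$. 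For the second, the hypotheses on $\upsilon_m$ (bounded in $L^2(0,T;H^1(\Omega))$ and in $L^\infty$, $\upsilon_m\to\upsilon$ a.e.) force $\upsilon_m\rightharpoonup\upsilon$ weakly in $L^2(0,T;H^1(\Omega))$ --- every weakly convergent subsequence has a limit which, by the $L^\infty$-bound and dominated convergence, coincides a.e.\ with $\upsilon$ --- and since $\partial_t u\in L^2(0,T;H^1(\Omega)')=(L^2(0,T;H^1(\Omega)))'$, the second integral tends to $\int_0^T\langle\partial_t u,\upsilon\rangle\,dt$, which is \ref{ConvergencedtPsim}.

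For the construction I would first reflect $u$ at $t=0$ and $t=T$ to a function $\bar u$ on a bounded open interval $I\supset[0,T]$, so that $\bar u\in L^2(I;H^1(\Omega))$ with $0\le\bar u\le M$ and $\partial_t\bar u\in L^2(I;H^1(\Omega)')$, and then set $u_\tau:=\bar u\ast_t\eta_\tau$ with a standard time-mollifier $\eta_\tau$. Since $\bar u$ takes values in $H^1(\Omega)$, the time derivative in $u_\tau$ falls on the mollifier, so $u_\tau$ is smooth in $t$ with values in $H^1(\Omega)$ on a neighbourhood of $[0,T]$; in particular $\partial_t u_\tau\in L^2(0,T;H^1(\Omega))$, and this gain of one order of time-regularity is the point of mollifying in time first. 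One has $0\le u_\tau\le M$, and $u_\tau\to u$ in $L^2(0,T;H^1(\Omega))$, $\partial_t u_\tau\to\partial_t u$ in $L^2(0,T;H^1(\Omega)')$ as $\tau\to0$. Next, with $E\colon H^1(\Omega)\to H^1(\R^n)$ a bounded linear extension operator and $\eta_\sigma$ a spatial mollifier, I would set $u_{\tau,\sigma}:=\bigl((Eu_\tau)\ast_x\eta_\sigma\bigr)\big|_{[0,T]\times\overline\Omega}$, which lies in $C^\infty([0,T]\times\overline\Omega)$ and satisfies, as $\sigma\to0$ with $\tau$ fixed, $u_{\tau,\sigma}\to u_\tau$ in $L^2(0,T;H^1(\Omega))$ and $\partial_t u_{\tau,\sigma}=\bigl((E\partial_t u_\tau)\ast_x\eta_\sigma\bigr)\big|_{\Omega}\to\partial_t u_\tau$ \emph{strongly} in $L^2(0,T;H^1(\Omega))$ --- here the regularity $\partial_t u_\tau\in L^2(0,T;H^1(\Omega))$ is used decisively. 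Finally I would fix a smooth, nondecreasing, $1$-Lipschitz function $g_\delta\colon\R\to[-\delta/2,\,M+\delta/2]$ with $g_\delta(s)=s$ for $s\in[-\delta/4,\,M+\delta/4]$, and put $\psi_{\tau,\sigma}:=g_\delta(u_{\tau,\sigma})\in C^\infty([0,T]\times\overline\Omega)$, whose range lies in $[-\delta/2,\,M+\delta/2]$ by construction.

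To finish I would check $\psi_{\tau,\sigma}\to u_\tau$ in $W$ as $\sigma\to0$. The values are handled by $\|g_\delta(u_{\tau,\sigma})-u_\tau\|_{L^2}=\|g_\delta(u_{\tau,\sigma})-g_\delta(u_\tau)\|_{L^2}\le\|u_{\tau,\sigma}-u_\tau\|_{L^2}\to0$; for the gradients and the time derivative I would pass to a subsequence along which $u_{\tau,\sigma}\to u_\tau$ and $\nabla u_{\tau,\sigma}\to\nabla u_\tau$ a.e., use continuity of $g_\delta'$ together with $g_\delta'(u_\tau)=1$ a.e.\ (since $0\le u_\tau\le M$) and $|g_\delta'|\le1$, and split, e.g.,
\begin{align*}
g_\delta'(u_{\tau,\sigma})\,\partial_t u_{\tau,\sigma}-\partial_t u_\tau
&=g_\delta'(u_{\tau,\sigma})\bigl(\partial_t u_{\tau,\sigma}-\partial_t u_\tau\bigr)
+\bigl(g_\delta'(u_{\tau,\sigma})-1\bigr)\partial_t u_\tau ,
\end{align*}
the first term going to $0$ in $L^2$ by the strong convergence and the second by dominated convergence; the same splitting works for $g_\delta'(u_{\tau,\sigma})\nabla u_{\tau,\sigma}-\nabla u_\tau$. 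Since $\|\cdot\|_{H^1(\Omega)'}\le\|\cdot\|_{L^2(\Omega)}$, this gives $\partial_t\psi_{\tau,\sigma}\to\partial_t u_\tau$ in $L^2(0,T;H^1(\Omega)')$; a routine sub-subsequence argument removes the subsequence, so $\psi_{\tau,\sigma}\to u_\tau$ in $W$, and together with $u_\tau\to u$ in $W$ as $\tau\to0$ a diagonal choice $\psi_m:=\psi_{\tau_m,\sigma(\tau_m)}$, $\tau_m\to0$, yields the required sequence.

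The hard part --- and the reason for the ordering of the three stages --- is that the truncation $g_\delta(\cdot)$ and the spatial mollification both interact badly with the \emph{weak} topology of $L^2(0,T;H^1(\Omega)')$ and with gradients of test functions that converge only weakly; mollifying in time \emph{first} lifts $\partial_t u$ into $L^2(0,T;H^1(\Omega))$, which makes the subsequent spatial mollification and the truncation strongly convergent at the level of time derivatives, and it is exactly the \emph{strong} convergence $\partial_t\psi_m\to\partial_t u$ in $L^2(0,T;H^1(\Omega)')$ that lets one absorb a test sequence $\upsilon_m$ which is merely bounded in $L^2(0,T;H^1(\Omega))$ and convergent a.e. Everything else is routine mollification and dominated-convergence bookkeeping, with the tacit proviso --- harmless in the application, where $\Omega=\Omega_\epsilon$ --- that $\Omega$ be regular enough to admit an $H^1$-extension operator.
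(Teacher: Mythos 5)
Your proof is correct and reaches the same conclusion, but by a genuinely different route. The paper invokes \cite[Proposition 23.23, (iii)]{Zei90} to get in one step a sequence $\phi_m\in C^\infty([0,T]\times\overline\Omega)$ with $\phi_m\to u$ in $L^2(0,T;H^1(\Omega))$ \emph{and} $\partial_t\phi_m\to\partial_t u$ in $L^2(0,T;H^1(\Omega)')$, then sets $\psi_m:=\rho_\delta(\phi_m)$. Crucially, the paper never claims $\partial_t\psi_m\to\partial_t u$ strongly in $L^2(0,T;H^1(\Omega)')$ --- such a claim is not obvious when the time derivative of $\phi_m$ lives only in $H^1(\Omega)'$, since multiplication by $\rho_\delta'(\phi_m)$ does not interact nicely with that dual norm. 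Instead, the paper rewrites $\langle\partial_t\psi_m,\upsilon_m\rangle=\langle\partial_t\phi_m,\rho_\delta'(\phi_m)\upsilon_m\rangle$, so that the truncation factor is moved onto the test side, and combines the strong convergence of $\partial_t\phi_m$ in $L^2(H^1)'$ with the weak convergence of $\rho_\delta'(\phi_m)\upsilon_m$ in $L^2(H^1)$, the latter obtained from the $L^\infty$-bound and a.e.\ convergence via \cite[(13.44) Theorem]{HewittStromberg}. You avoid this manoeuvre by inserting a time-mollification step before anything else; this lifts the time derivative from $L^2(H^1(\Omega)')$ to $L^2(H^1(\Omega))$, so that the truncation can be applied with the chain rule entirely at the level of $L^2((0,T)\times\Omega)$-functions, where dominated convergence works pointwise and one genuinely obtains $\partial_t\psi_m\to\partial_t u$ strongly in $L^2(0,T;H^1(\Omega)')$. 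This stronger property makes part \ref{ConvergencedtPsim} an immediate strong-weak pairing, and it is what justifies the ordering of your three regularization stages. What you pay is the extra machinery (reflection in time, an $H^1$-extension operator $E$, and a diagonal argument); what you gain is that the approximating sequence itself is better (graph-norm convergent), so the verification of \ref{ConvergencedtPsim} requires no restructuring of the duality pairing. Both proofs implicitly require $\partial\Omega$ regular enough that $C^\infty(\overline\Omega)$ is dense in $H^1(\Omega)$ (the paper to upgrade Zeidler's $C^\infty([0,T];H^1(\Omega))$-approximation to functions smooth up to the boundary, you to have the extension operator), which you correctly flag as harmless in the application to $\Omega_\epsilon$.
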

\begin{proof}
First, we observe from \cite[Proposition 23.23, (iii)]{Zei90}, there exists a sequence $\phi_m$ in $C^\infty([0,T] \times \overline{\Omega})$ such that
\begin{eqnarray} \label{p_m_phi1}
\phi_m \rightarrow u \ \text{in $L^2(0,T;H^1(\Omega))$ and} \ \partial_{t}\phi_m \rightarrow \partial_{t}u \ \text{in $L^2(0,T;H^1(\Omega)^\prime)$.}\label{p_m_phi_2}
\end{eqnarray} 
Now, let $\delta >0$ be any fixed number, and let $\rho_\delta \in C^\infty (\R)$ be such that $\rho_\delta, {\rho_\delta}^\prime, {\rho_\delta}^{\prime \prime}$ are bounded functions in $\R$ with $\rho_\delta (x) \in \left[-\frac{\delta}{2},\lVert u \rVert_{L^\infty ((0,T) \times \Omega)} + \frac{\delta}{2}\right]$ for all $x \in \R$ and 
 \begin{equation*}
 \rho_\delta (x) = x,\ \text{if $ x \in \left[-\frac{\delta}{4},\lVert u \rVert_{L^\infty ((0,T) \times \Omega)} + \frac{\delta}{4}\right]$}.
 \end{equation*}
 For example, such a function can be obtained by mollifying the following function:
 \begin{flalign*}
 \tilde{\rho}_\delta(x)=
 \begin{cases}
  \lVert u \rVert_{L^\infty ((0,T) \times \Omega)} + \frac{\delta}{2} &\text{ if $x > \lVert u \rVert_{L^\infty ((0,T) \times \Omega)} + \frac{\delta}{2}$,} \\
 x  &\text{ if $x\in \left[-\frac{\delta}{2},\lVert u \rVert_{L^\infty ((0,T) \times \Omega)} + \frac{\delta}{2}\right]$,} \\
  -\frac{\delta}{2} &\text{ if $x < -\frac{\delta}{2}$}.
 \end{cases}
 \end{flalign*}
 For the sake of clarity, in the remaining part of the proof we skip the index $\delta$ and denote $\rho_\delta$ simply by $\rho$. Note that $\rho (u) = u, \ \rho^\prime ( u) =1, \  \rho^{\prime \prime} (u) = 0$ for almost every $(t,x) \in (0,T) \times \Omega$. Let $\lVert \rho ^\prime \rVert_{L^\infty(\R)}<M,$ for some $M>0$. Since up to a subsequence, which is still indexed by $m$, $\phi_m$ converges to $u$ almost everywhere in $(0,T) \times \Omega$ and $\rho$ is continuous, we have
 \begin{equation*}
 \rho  (\phi_m) \rightarrow \rho (u) = u \ \ \text{a.e. in $(0,T) \times \Omega$.}
 \end{equation*}
 Again $ | \rho (\phi_m )| < C$ in $[0,T] \times \overline{\Omega}$, where $C>0$ is some constant independent of $m$. 
 So by the dominated convergence theorem, we have
 \begin{equation}\label{lemm_approx_1}
\rho (\phi_m) \rightarrow u \ \ \text{in $L^2 (0,T;L^2(\Omega))$.}
 \end{equation}
Again, for $ j \in \{1,...,n\}$, up to a subsequence, still indexed by $m$, we have that
\begin{equation*}
\frac{\partial}{\partial x_j} \rho ( \phi_m)= \rho ^\prime (\phi_m)  \frac{\partial \phi_m}{\partial x_j}
\end{equation*}
converges almost everywhere in $(0,T) \times \Omega$ to
\begin{equation*}
\rho^\prime (u) \frac{\partial u}{\partial x_j} =\frac{\partial u}{\partial x_j}.
\end{equation*}
Furthermore, it holds
\begin{equation*}
\left|\frac{\partial}{\partial x_j} \rho ( \phi_m) \right|= \left | \rho ^\prime (\phi_m)  \frac{\partial \phi_m}{\partial x_j}  \right | \leq M \left |\frac{\partial \phi_m}{\partial x_j} \right | \ \text{a.e. in $(0,T) \times \Omega$.}
\end{equation*}
Thus, by the generalized dominated convergence theorem (see \cite[Exercises 20, 21, p. 59]{Fol99}), we get 
\begin{equation} \label{lemm_approx_2}
\frac{\partial}{\partial x_j}\rho(\phi_m) \rightarrow \frac{\partial u}{\partial x_j}  \ \ \text{in $L^2(0,T;L^2(\Omega))$.}
\end{equation}
Finally, we choose a subsequence of $\rho(\phi_m)$, still indexed by $m$, along which the above convergence results hold and set $\psi_m:= \rho(\phi_m)$. This proves \ref{ConvergencePsimL2H1}. To prove \ref{ConvergencedtPsim} we use
\begin{align}\label{AuxiliaryEquationPhim}
\int_0^T \langle \partial_t \psi_m , \upsilon_m \rangle_{H^1(\Omega)',H^1(\Omega)}dt =  \int_0^T \langle \partial_t \phi_m  , \rho'(\phi_m)\upsilon_m \rangle_{H^1(\Omega)',H^1(\Omega)}dt.
\end{align}
Due to the strong convergence of $\partial_t \phi_m$ in $L^2((0,T),H^1(\Omega)')$, the claim follows if we show the weak convergence of $\rho'(\phi_m) \upsilon_m$ in $L^2((0,T),H^1(\Omega))$. From the assumptions on $\upsilon_m$ and similar arguments as above we get
\begin{align*}
\Vert \rho'(\phi_m)\upsilon_m\Vert_{L^2((0,T),H^1(\Omega))} \le C,
\end{align*}
and 
\begin{align*}
\rho^\prime(\phi_m)\upsilon_m \rightarrow \upsilon, \quad \nabla (\rho^\prime(\phi_m)\upsilon_m) \rightarrow \nabla \upsilon \qquad \mbox{ a.e. in } (0,T)\times \Omega.
\end{align*}
The weak convergence of $\rho^\prime(\phi_m)\upsilon_m$ to $\upsilon$ now follows from \cite[(13.44) Theorem]{HewittStromberg}.
\end{proof}
\begin{lemma}\label{lemma_weak_time_derivative}
Let $c_{i,\epsilon}$ be a weak solution to the microscopic model (\ref{Exist_1})-(\ref{Exist_2}). Then for each $\delta >0$, the following equality holds in the space $L^1(0,T;\R)$:
	\begin{equation*}
	\frac{d}{dt} \int_{\Omega_\epsilon} (c_{i,\epsilon} + \delta) \log(c_{i,\epsilon}+\delta) \,dx = \left< \partial_{t} c_{i,\epsilon}, \log (c_{i,\epsilon}+\delta) \right>_{H^1(\Omega_\epsilon)^\prime,H^1(\Omega_\epsilon)}.
	\end{equation*}

\end{lemma}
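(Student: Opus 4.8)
The plan is to obtain the identity by first establishing its integrated form for smooth approximations of $c_{i,\epsilon}$ provided by Lemma \ref{lemma_approx}, and then passing to the limit. Set $\beta(r):=(r+\delta)\log(r+\delta)$, so that $\beta'(r)=\log(r+\delta)+1$. Since $\delta>0$ and $c_{i,\epsilon}\ge 0$ is essentially bounded, the functions $\beta,\beta',\beta''$ are bounded on $I_\delta:=\big[-\tfrac{\delta}{2},\ \|c_{i,\epsilon}\|_{L^\infty((0,T)\times\Omega_\epsilon)}+\tfrac{\delta}{2}\big]$, on which $r+\delta\ge\tfrac{\delta}{2}$. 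Applying Lemma \ref{lemma_approx} on $\Omega_\epsilon$ with $u=c_{i,\epsilon}$ yields a sequence $\psi_m\in C^\infty([0,T]\times\overline{\Omega_\epsilon})$ with values in $I_\delta$ such that $\psi_m\to c_{i,\epsilon}$ in $L^2(0,T;H^1(\Omega_\epsilon))$ and with property (ii) of that lemma; passing to a subsequence I may moreover assume $\psi_m\to c_{i,\epsilon}$ and $\nabla\psi_m\to\nabla c_{i,\epsilon}$ almost everywhere in $(0,T)\times\Omega_\epsilon$.

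For each smooth $\psi_m$, since $\psi_m+\delta\ge\tfrac{\delta}{2}>0$, the classical chain rule gives $\partial_t\beta(\psi_m)=\beta'(\psi_m)\,\partial_t\psi_m$ pointwise; integrating over $\Omega_\epsilon$ and over $(0,t)$, and using that for smooth functions the duality pairing coincides with the $L^2(\Omega_\epsilon)$-scalar product, I obtain, with $F_m(t):=\int_{\Omega_\epsilon}\beta(\psi_m(t))\,dx$,
\begin{equation*}
F_m(t)-F_m(0)=\int_0^t\big\langle\partial_t\psi_m(s),\beta'(\psi_m(s))\big\rangle_{H^1(\Omega_\epsilon)^\prime,H^1(\Omega_\epsilon)}\,ds,\qquad t\in[0,T].
\end{equation*}
On the left, $\beta(\psi_m)\to\beta(c_{i,\epsilon})$ a.e. with $|\beta(\psi_m)|\le C$, so dominated convergence gives $F_m\to F$ in $L^1(0,T)$, where $F(t):=\int_{\Omega_\epsilon}\beta(c_{i,\epsilon}(t))\,dx$; hence $F_m(t)\to F(t)$ for a.e. $t$ along a further subsequence. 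On the right, I invoke property (ii) of Lemma \ref{lemma_approx} with the test sequence $\upsilon_m:=\chi_{(0,t)}\,\beta'(\psi_m)$ and limit $\upsilon:=\chi_{(0,t)}\,\beta'(c_{i,\epsilon})$: this sequence is bounded in $L^\infty((0,T)\times\Omega_\epsilon)$ because $\beta'$ is bounded on $I_\delta$, and bounded in $L^2(0,T;H^1(\Omega_\epsilon))$ because $\nabla\beta'(\psi_m)=(\psi_m+\delta)^{-1}\nabla\psi_m$ with $\psi_m+\delta\ge\tfrac{\delta}{2}$, while $\upsilon_m\to\upsilon$ and $\nabla\upsilon_m\to\nabla\upsilon$ a.e.; therefore, writing $H(t):=\int_0^t\langle\partial_t c_{i,\epsilon}(s),\beta'(c_{i,\epsilon}(s))\rangle\,ds$, for every fixed $t\in(0,T)$
\begin{equation*}
\int_0^t\big\langle\partial_t\psi_m,\beta'(\psi_m)\big\rangle\,ds=\int_0^T\big\langle\partial_t\psi_m,\upsilon_m\big\rangle\,ds\ \longrightarrow\ \int_0^T\big\langle\partial_t c_{i,\epsilon},\upsilon\big\rangle\,ds=H(t).
\end{equation*}

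Combining the two limits, for a.e. $t$ one has $F_m(0)=F_m(t)-\int_0^t\langle\partial_t\psi_m,\beta'(\psi_m)\rangle\,ds\to F(t)-H(t)$; since the left-hand side does not depend on $t$, the difference $F(t)-H(t)$ must equal a constant $\ell$ for a.e. $t$, i.e. $F(t)=\ell+\int_0^t\langle\partial_t c_{i,\epsilon}(s),\beta'(c_{i,\epsilon}(s))\rangle\,ds$ for a.e. $t$. This shows that $t\mapsto\int_{\Omega_\epsilon}(c_{i,\epsilon}(t)+\delta)\log(c_{i,\epsilon}(t)+\delta)\,dx$ belongs to $W^{1,1}(0,T)$ with weak derivative $t\mapsto\langle\partial_t c_{i,\epsilon}(t),\log(c_{i,\epsilon}(t)+\delta)+1\rangle$ almost everywhere (both sides being in $L^1(0,T)$ by the a priori regularity of $c_{i,\epsilon}$ and $\partial_t c_{i,\epsilon}$). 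Finally, testing \eqref{Exist_1} with $\psi\equiv1$ gives $\langle\partial_t c_{i,\epsilon}(t),1\rangle=0$ for a.e. $t$, so the additive constant may be removed, which is precisely the asserted equality in $L^1(0,T;\R)$.

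The step I expect to be the main obstacle is the passage to the limit in $\langle\partial_t\psi_m,\beta'(\psi_m)\rangle$. A naive argument via weak convergence of $\partial_t\psi_m$ in $L^2(0,T;H^1(\Omega_\epsilon)^\prime)$ fails, since the approximants of Lemma \ref{lemma_approx} are of the form $\rho(\phi_m)$ and $\partial_t\psi_m=\rho'(\phi_m)\partial_t\phi_m$ need not be bounded in that space (multiplication by $\rho'(\phi_m)$ does not act boundedly on $H^1(\Omega_\epsilon)$). This is exactly why property (ii) of Lemma \ref{lemma_approx} is formulated with a varying admissible test sequence; the real content here is the verification that $\upsilon_m=\chi_{(0,t)}\beta'(\psi_m)$ meets its hypotheses, after which the conclusion follows by standard dominated-convergence and weak-derivative arguments.
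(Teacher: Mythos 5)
Your proof is correct and rests on the same mechanism as the paper's: both proofs hinge on Lemma \ref{lemma_approx}, approximate $c_{i,\epsilon}$ by truncated smooth functions $\psi_m$, and crucially invoke property (ii) of that lemma with a carefully chosen test sequence, after which the constant $1$ in $\beta'$ is discarded via $\langle \partial_t c_{i,\epsilon},1\rangle=0$. The only difference is bookkeeping: the paper verifies the weak-derivative identity by testing against $\psi\in C_0^\infty(0,T)$ and chooses $\upsilon_m=\log(\psi_m+\delta)\psi$ (handling the $\log$ and the constant $1$ as two separate applications of property (ii)), whereas you establish the fundamental-theorem-of-calculus form directly by applying property (ii) once to $\upsilon_m=\chi_{(0,t)}\beta'(\psi_m)$ and then recovering the constant of integration by comparing with the ($t$-independent) initial value $F_m(0)$. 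Your verification that $\chi_{(0,t)}\beta'(\psi_m)$ satisfies the hypotheses of Lemma \ref{lemma_approx}\,(ii) is complete and correct (the uniform lower bound $\psi_m+\delta\ge\delta/2$ is what makes $\nabla\beta'(\psi_m)$ controllable), and the subsequence/almost-everywhere reasoning that turns the integrated identity into an identity in $W^{1,1}(0,T)$ is sound.
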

\begin{proof}
In order to prove the lemma, we show that the followings hold:
	\begin{itemize}
\item[(i)] \text{$\int_{\Omega_\epsilon} (c_{i,\epsilon} + \delta) \log(c_{i,\epsilon}+\delta) \,dx \in L^1(0,T)$}.
\item[(ii)] \text{$\left< \partial_{t} c_{i,\epsilon}, \log (c_{i,\epsilon}+\delta) \right>_{H^1(\Omega_\epsilon)^\prime,H^1(\Omega_\epsilon)} \in L^1(0,T)$}.
\item[(iii)]For all $\psi \in C^\infty_0(0,T)$, we have 
	\begin{eqnarray*}
	&&\int_{0}^{T}\int_{\Omega_\epsilon} (c_{i,\epsilon} + \delta) \log(c_{i,\epsilon}+\delta) \psi^\prime(t) \,dx \,dt \\
	&&=- \int_{0}^{T}\left< \partial_{t} c_{i,\epsilon}, \log (c_{i,\epsilon}+\delta) \right>_{H^1(\Omega_\epsilon)^\prime,H^1(\Omega_\epsilon)} \psi(t) \,dt.
	\end{eqnarray*}
\end{itemize}
Statement (i) follows immediately from the fact that $c_{i,\epsilon} \in L^\infty((0,T) \times \Omega_\epsilon)$ and is non-negative. (ii) is also obtained easily by noting that
	\begin{eqnarray*}
	&&\int_{0}^{T}\left|\left< \partial_{t} c_{i,\epsilon}, \log (c_{i,\epsilon}+\delta) \right>_{H^1(\Omega_\epsilon)^\prime,H^1(\Omega_\epsilon)} \right|\,dt \\
	&& \leq \int_{0}^{T} \lVert \partial _{t}c_{i,\epsilon}\rVert_{H^1(\Omega_\epsilon)^\prime} \lVert \log (c_{i,\epsilon}+\delta)\rVert_{H^1(\Omega_\epsilon)}\,dt\\
	&&\leq  \lVert \partial _{t}c_{i,\epsilon}\rVert_{L^2(0,T;H^1(\Omega_\epsilon)^\prime)} \lVert \log  (c_{i,\epsilon}+\delta)\rVert_{L^2(0,T;H^1(\Omega_\epsilon))} < \infty.
	\end{eqnarray*}
Next, let us prove (iii).
Lemma \ref{lemma_approx} guarantees the existence of a sequence $\psi_m$ in $C^\infty ([0,T] \times \overline{\Omega}_\epsilon )$ converging strongly to $c_{i,\epsilon}$ in $L^2((0,T),H^1(\Omega_{\epsilon}))$ and for all $m$, the range of $\psi_m \subseteq  [- \frac{\delta}{2}, \lVert c_{i,\epsilon} \rVert_{L^\infty ((0,T) \times \Omega_\epsilon)} + \frac{\delta}{2}]$. Then, 
	   \begin{eqnarray}\label{lemm_time_1}
	   &&\int_{0}^{T}\int_{\Omega_\epsilon} (c_{i,\epsilon} + \delta) \log(c_{i,\epsilon}+\delta) \psi^\prime(t) \,dx \,dt \nonumber \\
	   &&= \lim_{m \to \infty } \int_{0}^{T}\int_{\Omega_\epsilon} [\psi_m + \delta] \log[\psi_m +\delta] \psi^\prime(t) \,dx \,dt.
	   \end{eqnarray}
Note that $\log[\psi_m+\delta]$ is well-defined, since $\psi_m \geq -\frac{\delta}{2}$. After an integration by parts in time, the right hand side of (\ref{lemm_time_1}) becomes
	   \begin{eqnarray}\label{lemm_time_2}
	  &&- \lim_{m \to \infty} \int_{0}^{T} \int_{\Omega_\epsilon} \partial_{t} \psi_m  \log[\psi_m+\delta]\psi(t)\,dx \,dt \nonumber \\
	  &&-\lim_{m\to \infty} \int_{0}^{T} \int_{\Omega_\epsilon} \partial_{t} \psi_m  \psi(t) \,dx \,dt =: -I - II. \nonumber 
	   \end{eqnarray}
Choosing $\upsilon_m := \log(\psi_m + \delta) \psi$ in Lemma \ref{lemma_approx} \ref{ConvergencedtPsim}, it is easy to check that by the convergence of $\psi_m $ in $L^2((0,T),H^1(\Omega))$, the sequence $\upsilon_m$ fulfills the assumptions of Lemma \ref{lemma_approx} with $\upsilon = \log(c_{i,\epsilon} +\delta)\psi$. Hence, we obtain
\begin{align*}
I = \int_{0}^{T}\left<\partial_{t} c_{i,\epsilon}, \log(c_{i,\epsilon}+\delta)\right>_{H^1(\Omega_\epsilon)^\prime,H^1(\Omega_\epsilon)} \psi(t) \,dt.
\end{align*} 	   
In the same way we obtain
\begin{align*}
II =  \int_{0}^{T} \left < \partial_{t} c_{i,\epsilon}, 1\right>_{H^1(\Omega_\epsilon)^\prime,H^1(\Omega_\epsilon)} \psi(t)\,dt=0,
\end{align*}
where the last equality follows from testing the equation \eqref{Exist_1} with $1$.	  
\end{proof}
 
\begin{lemma}\label{lemma_extension}
For $1 \leq q < \infty$ there exists an extension operator \,  $\widetilde{\cdot}: W^{1,q}(\Omega_\epsilon) \to W^{1,q}(\Omega)$ such that for all $u_\epsilon \in W^{1,q}(\Omega_\epsilon)$, we have
\begin{equation}\label{lemm_extension}
     \lVert \widetilde{u}_\epsilon \rVert_{L^q(\Omega)} \leq C \lVert u_\epsilon \rVert_{L^q(\Omega_\epsilon)} , \ 
     \lVert \nabla \widetilde{u}_\epsilon \rVert_{L^q(\Omega)} \leq C \lVert \nabla u_\epsilon \rVert_{L^q(\Omega_\epsilon)},
\end{equation}
with a constant $C >0$ independent of $\epsilon$. 

Furthermore, if  $u_\epsilon \in W^{1,q}(\Omega_\epsilon) \cap L^\infty (\Omega_\epsilon)$, then $\widetilde{u}_\epsilon \in L^\infty (\Omega)$ and 
$$
\lVert \widetilde{u}_\epsilon \rVert _{L^\infty (\Omega)} \leq C \lVert u_\epsilon \rVert_{L^\infty(\Omega_\epsilon)},
$$ with a constant $C>0$ independent $\epsilon$, and if $u_\epsilon \in W^{1,q}(\Omega_\epsilon)$ is a non-negative function, so is the extension $\widetilde{u}_\epsilon$. 
\end{lemma}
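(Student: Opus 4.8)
The plan is to build the required operator as a composition: first apply the classical $\epsilon$-uniform \emph{linear} extension operator for perforated domains, which already yields the bounds \eqref{lemm_extension}, and then post-compose it with a pointwise truncation adapted to $u_\epsilon$; the truncation is what forces the $L^\infty$-bound and the preservation of non-negativity, while one has to check that it does not spoil the $L^q$-estimate.

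For the linear part I would recall the standard construction. On the reference cell $Y=Y^f\cup\overline{Y^s}$, since $\overline{Y^s}\subset Y$ and $\Gamma=\partial Y^s$ is smooth, there is a bounded linear extension operator $E_0:W^{1,q}(Y^f)\to W^{1,q}(Y)$ which is the identity on $Y^f$; as it only alters values inside $\overline{Y^s}$, $E_0u$ agrees with $u$ on $\partial Y$, so rescaling $E_0$ to each $\epsilon Y_k$ with $k\in I_\epsilon$ and leaving $u_\epsilon$ unchanged on the perforation-free layer $\Omega_\epsilon\setminus\bigcup_{k\in I_\epsilon}\epsilon Y^f_k$ gives a well-defined $E_\epsilon u_\epsilon\in W^{1,q}(\Omega)$ with $E_\epsilon u_\epsilon=u_\epsilon$ on $\Omega_\epsilon$. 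Combining the cell estimates for $E_0$ with the Poincaré--Wirtinger inequality on the connected Lipschitz set $Y^f$ and the natural scaling yields \eqref{lemm_extension} with a constant independent of $\epsilon$; this is classical and I would simply cite \cite{Cio79,Acerbi1992}. Note that $E_\epsilon$ is constructed once and works for every $q\in[1,\infty)$ simultaneously.

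Next comes the truncation. Given $u_\epsilon\in W^{1,q}(\Omega_\epsilon)$, let $m_\epsilon\in[-\infty,\infty)$ and $M_\epsilon\in(-\infty,\infty]$ be the essential infimum and supremum of $u_\epsilon$ over $\Omega_\epsilon$, put $T_{m,M}(s):=\min\{M,\max\{m,s\}\}$ (with the obvious conventions $T_{-\infty,M}(s)=\min\{M,s\}$, $T_{-\infty,\infty}=\mathrm{id}$, etc.), and define $\widetilde{u_\epsilon}:=T_{m_\epsilon,M_\epsilon}(E_\epsilon u_\epsilon)$, which is again $q$-independent. Since $T_{m,M}$ is $1$-Lipschitz, $\widetilde{u_\epsilon}\in W^{1,q}(\Omega)$ and, by the chain rule for Sobolev functions, $|\nabla\widetilde{u_\epsilon}|\le|\nabla E_\epsilon u_\epsilon|$ a.e., so the gradient bound in \eqref{lemm_extension} is inherited; since $u_\epsilon$ takes values in $[m_\epsilon,M_\epsilon]$ on $\Omega_\epsilon$ and $E_\epsilon u_\epsilon=u_\epsilon$ there, one has $\widetilde{u_\epsilon}=u_\epsilon$ on $\Omega_\epsilon$, i.e. $\widetilde{\cdot}$ is an extension operator. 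If $u_\epsilon\in L^\infty(\Omega_\epsilon)$ then $m_\epsilon,M_\epsilon$ are finite and $\widetilde{u_\epsilon}\in[m_\epsilon,M_\epsilon]$ a.e., hence $\|\widetilde{u_\epsilon}\|_{L^\infty(\Omega)}\le\max\{|m_\epsilon|,|M_\epsilon|\}=\|u_\epsilon\|_{L^\infty(\Omega_\epsilon)}$; and if $u_\epsilon\ge 0$ a.e. then $m_\epsilon\ge 0$, so $\widetilde{u_\epsilon}\ge m_\epsilon\ge 0$ a.e. To see that the $L^q$-bound survives, compare pointwise: where $m_\epsilon\le E_\epsilon u_\epsilon\le M_\epsilon$ we have $\widetilde{u_\epsilon}=E_\epsilon u_\epsilon$; where $E_\epsilon u_\epsilon>M_\epsilon$ we have $|\widetilde{u_\epsilon}|=|M_\epsilon|$, and either $M_\epsilon\ge 0$ (then $|M_\epsilon|\le E_\epsilon u_\epsilon=|E_\epsilon u_\epsilon|$) or $M_\epsilon<0$, in which case $u_\epsilon\le M_\epsilon<0$ a.e. on $\Omega_\epsilon$ gives $|M_\epsilon|\le|\Omega_\epsilon|^{-1}\int_{\Omega_\epsilon}|u_\epsilon|\le C\|u_\epsilon\|_{L^q(\Omega_\epsilon)}$ by Hölder and the fact that $|\Omega_\epsilon|$ is bounded below uniformly in $\epsilon$; the case $E_\epsilon u_\epsilon<m_\epsilon$ is symmetric (using $u_\epsilon\ge 0$ or $u_\epsilon\ge m_\epsilon>0$). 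Thus $|\widetilde{u_\epsilon}|\le|E_\epsilon u_\epsilon|+C\|u_\epsilon\|_{L^q(\Omega_\epsilon)}$ a.e. on $\Omega$, and integrating together with the estimate for $E_\epsilon$ gives $\|\widetilde{u_\epsilon}\|_{L^q(\Omega)}\le C\|u_\epsilon\|_{L^q(\Omega_\epsilon)}$, completing \eqref{lemm_extension}.

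The main point of the argument is really the observation that the classical extension operator, being linear, respects neither sign nor sup-norm, so one is forced to post-compose with a data-dependent truncation; the only genuine bookkeeping is the last step above, namely checking that truncating at the possibly nonzero levels $m_\epsilon,M_\epsilon$ does not destroy the $\epsilon$-uniform $L^q$-estimate, which is settled by bounding $|m_\epsilon|$ or $|M_\epsilon|$ by the average (hence by the $L^q$-norm) of $u_\epsilon$ precisely in the cases where these levels carry the "wrong" sign.
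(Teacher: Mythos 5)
Your proof is correct and takes a genuinely different route from the paper. The paper proves the additional properties (essential boundedness, non-negativity) \emph{constructively}: it opens up the classical reflection-based extension on the reference cell, observes that the local reflection operator $P$ in \eqref{LocalReflectionExtension} satisfies $0 \le Pu \le \|u\|_{L^\infty(Y^f)}$ for non-negative bounded $u$, and then checks that the mean-value correction $Su := P(u - (u)_{Y^f}) + (u)_{Y^f}$ (needed to get the $\epsilon$-uniform gradient bound) only degrades these properties by a controlled constant. Your approach is more modular: you treat the $\epsilon$-uniform linear extension operator $E_\epsilon$ as a black box, and fix up sign and boundedness by post-composing with the data-dependent truncation $T_{m_\epsilon,M_\epsilon}$. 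The case analysis you give to recover the $L^q$-bound from the truncation is correct: the truncation level only ``carries the wrong sign'' when $u_\epsilon$ is uniformly bounded away from zero with that same sign on all of $\Omega_\epsilon$, and then $|m_\epsilon|$ or $|M_\epsilon|$ is dominated by the average of $|u_\epsilon|$, hence by $|\Omega_\epsilon|^{-1/q}\|u_\epsilon\|_{L^q(\Omega_\epsilon)}$ with $|\Omega_\epsilon|$ bounded below uniformly in $\epsilon$.

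The one trade-off worth flagging is that your operator $\widetilde{\cdot}$ is \emph{nonlinear}, since the truncation levels $m_\epsilon, M_\epsilon$ depend on $u_\epsilon$, whereas the paper's operator is linear and bounded (indeed, ``extension operator'' in the homogenization literature usually connotes linearity). This does not affect the validity of the lemma as literally stated, and none of the stated estimates require linearity. However, in the applications the extension is applied pointwise in $t$ to $c_{i,\epsilon}(t,\cdot)$ and $\phi_\epsilon(t,\cdot)$, and linearity of the extension operator gives strong measurability of $t \mapsto \widetilde{u_\epsilon}(t,\cdot)$ for free; with your nonlinear map one would have to verify this separately (it does hold here, since $t \mapsto m_\epsilon(t), M_\epsilon(t)$ are measurable and $T$ is jointly continuous, or more simply by using the $t$-independent levels $m = 0$, $M = \|u_\epsilon\|_{L^\infty((0,T)\times\Omega_\epsilon)}$ in the only place positivity and boundedness are actually used). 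So your proof is a valid, arguably more elementary, alternative; the paper's construction has the extra benefit of producing a linear operator, which is more robust for downstream use.
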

\begin{proof}
The existence of an extension operator satisfying the estimates \eqref{lemm_extension} is by now a standard result in homogenization theory, see, e.g., \cite{Cio79} and \cite{Acerbi1992}, and is based on a similar result in the standard cell followed by a decomposition of the domain $\Omega_\epsilon$ in $\epsilon$-cells and a scaling argument. Since however, we need further properties of the extension operator (like non-negativity and essential boundedness), let us sketch the construction of the extension operator in the standard cell, from which the additional properties can be derived.  
\\
In a first step we extend a function $u \in W^{1,q}(Y^f)$ by a reflection method to the whole cell $Y$. More precisely, we define the extension operator $P: W^{1,q} (Y^f) \to W^{1,q} (Y)$ in the following way (see \cite[Theorem 9.7]{Bre11} for more details and \cite{Nec12} for more general settings and higher order derivatives):  let $U_1,...,U_k$ be an open covering of $\Gamma$ such that $U_i \subset Y$ and assume $\theta_i \in C_0^\infty (U_i)$ for all $i \leq 1 \leq k$; $\theta_0 \in C^\infty (\overline{Y})$, supp $\theta_0 \subset \overline{Y} \setminus \Gamma$, with $\sum_{i=0}^{k} \theta _i =1$ and $0 \le \theta_i \le 1$ for $i=0,\ldots,k$. We define
\begin{align}\label{LocalReflectionExtension}
\bar{u}_0(y):= 
\begin{cases}
\theta_0 (y) u (y) &\text{if $y\in Y^f$,} \\
0 &\text{ if $y\in Y \setminus Y^f$;}
\end{cases}
\qquad 
\hat{u}_i(y):= 
\begin{cases}
\theta_i (y) w_i (y) &\text{if $y\in U_i$,} \\
0 &\text{if $y\in Y \setminus U_i$,}
\end{cases}
\end{align}
for $1 \leq i \leq k$, where the functions $w_i$ represent the extension of $\left.u\right|_{U_i \cap Y^f}$ to $U_i$ by reflection. We emphasize that 
\begin{align}\label{AuxiliaryInequalityLinftyExtension}
\Vert w_i \Vert_{L^{\infty}(U_i)} \le \Vert  u \Vert_{L^{\infty}(U_i \cap Y^f)}.
\end{align}
 Then we define the extension operator
\begin{equation*}
Pu = \bar {u}_0 + \sum_{i=1}^{k} \hat{u}_i.
\end{equation*}
From $\eqref{LocalReflectionExtension}$ and $\eqref{AuxiliaryInequalityLinftyExtension}$ we immediately obtain for $u \in W^{1,q}(Y^f)\cap L^{\infty}(Y^f)$ and non-negative that
\begin{align}\label{EssentialLowUpBoundExtension}
0 \le Pu(y) \le \Vert u \Vert_{L^{\infty}(Y^f)} \qquad \mbox{f.a.e. } y \in Y.
\end{align}
The crucial point in the construction of the global extension operator on $\Omega_{\epsilon}$ is that the norm of the gradient of the extension can be estimated by norm of the gradient of the function itself. However, in general this is not the case for the operator $P$. Hence, in the second step, we construct a local extension operator $S: W^{1,q}(Y^f)\rightarrow W^{1,q}(Y)$ such that
\begin{align}\label{AuxiliaryInequalitiyExtensionGradient}
\Vert \nabla Su \Vert_{L^q(Y)} \le C \Vert \nabla u\Vert_{L^q(Y^f)}.
\end{align}
Since $Y^f$ is connected, the existence of such an operator is well-known, see 
\cite{Cio79}  or \cite{Acerbi1992}. We only have to show that the operator $S$ also preserves the non-negativity and essential boundedness of a function. We denote the mean-value on $Y^f$ of a function $u$ by $(u)_{Y^f}$. Now, we define as in \cite[Lemma 3]{Cio79} the extension operator
\begin{align*}
Su := P\left(u - (u)_{Y^f}\right) + (u)_{Y^f},
\end{align*}
which especially fulfills $\eqref{AuxiliaryInequalitiyExtensionGradient}$. Further, for $u \in W^{1,q}(Y^f)\cap L^{\infty}(Y^f)$ non-negative, we obtain from $\eqref{EssentialLowUpBoundExtension}$ 
\begin{align*}
\Vert Su \Vert_{L^{\infty}(Y)} \le \Vert u\Vert_{L^{\infty}(Y^f)} + 2\vert (u)_{Y^f}\vert \le C \Vert u \Vert_{L^{\infty}(Y^f)},
\end{align*}
and
\begin{align*}
Su = Pu \underbrace{- P(u)_{Y^f}}_{\geq - (u)_{Y^f}} + (u)_{Y^f} \geq 0.
\end{align*}
This completes the proof.
\end{proof}

\begin{lemma}\label{lemma_mean_value}
 For all $u_\epsilon \in H^1(\Omega_\epsilon)$ with $\frac{1}{|\Omega_\epsilon|} \int_{\Omega_\epsilon} u_\epsilon \,dx =0$, we have
 \begin{equation}\label{Lemma_mean_value}
  \lVert u_\epsilon \rVert _{L^2(\Omega_\epsilon)} \leq C \lVert \nabla u_\epsilon \rVert_{L^2(\Omega_\epsilon)},
 \end{equation}
with a constant $C>0$ independent of $\epsilon$.
\end{lemma}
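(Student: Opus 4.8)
The plan is to establish the uniform Poincaré–Wirtinger inequality on the perforated domain $\Omega_\epsilon$ by a standard scaling-and-extension argument, reducing everything to the fixed reference cell $Y^f$ (where the classical Poincaré inequality holds because $Y^f$ is connected with Lipschitz boundary). The starting observation is that the extension operator $\widetilde{\cdot}\colon H^1(\Omega_\epsilon)\to H^1(\Omega)$ from Lemma \ref{lemma_extension} satisfies $\|\widetilde{u_\epsilon}\|_{L^2(\Omega)}\le C\|u_\epsilon\|_{L^2(\Omega_\epsilon)}$ and $\|\nabla\widetilde{u_\epsilon}\|_{L^2(\Omega)}\le C\|\nabla u_\epsilon\|_{L^2(\Omega_\epsilon)}$ with $C$ independent of $\epsilon$. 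Thus it suffices to produce, for a function $u_\epsilon$ with zero mean on $\Omega_\epsilon$, a uniform bound of the form $\|u_\epsilon\|_{L^2(\Omega_\epsilon)}\le C\|\nabla u_\epsilon\|_{L^2(\Omega_\epsilon)}$ directly, without passing through $\Omega$.

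First I would reduce to the reference cell. For each admissible cell index $k\in I_\epsilon$, rescale $v_k(y):=u_\epsilon(\epsilon(y+k))$ defined on $Y^f$; by the classical Poincaré inequality on the connected Lipschitz domain $Y^f$ one has $\|v_k-(v_k)_{Y^f}\|_{L^2(Y^f)}\le C_Y\|\nabla_y v_k\|_{L^2(Y^f)}$ with $C_Y$ depending only on $Y^f$. Undoing the scaling ($dx=\epsilon^n dy$, $\nabla_x=\epsilon^{-1}\nabla_y$) gives on each cell $\epsilon Y_k^f$ the estimate $\|u_\epsilon-m_k\|_{L^2(\epsilon Y_k^f)}\le C_Y\epsilon\|\nabla u_\epsilon\|_{L^2(\epsilon Y_k^f)}$, where $m_k$ is the average of $u_\epsilon$ over $\epsilon Y_k^f$. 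Summing over $k\in I_\epsilon$ yields $\sum_k\|u_\epsilon-m_k\|_{L^2(\epsilon Y_k^f)}^2\le C_Y^2\epsilon^2\|\nabla u_\epsilon\|_{L^2(\Omega_\epsilon)}^2$, i.e.\ the oscillation of $u_\epsilon$ within cells is controlled with a gain of $\epsilon$.

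Next I would control the macroscopic variation of the piecewise-constant function $M_\epsilon:=\sum_k m_k\chi_{\epsilon Y_k^f}$. Since neighbouring cells share a face (or more precisely the fluid parts of neighbouring cells are connected through $\Omega_\epsilon$), a discrete Poincaré inequality for the piecewise-constant function, combined with the cell estimates above, gives $\|M_\epsilon-\overline{M_\epsilon}\|_{L^2(\Omega_\epsilon)}\le C\|\nabla u_\epsilon\|_{L^2(\Omega_\epsilon)}$ with $C$ independent of $\epsilon$; here $\overline{M_\epsilon}$ is a suitable global average. Alternatively, and more cleanly, one can avoid the discrete step entirely: apply the standard Poincaré–Wirtinger inequality on the fixed connected domain $\Omega$ (or on the extension $\widetilde\Omega$) to $\widetilde{u_\epsilon}-(\widetilde{u_\epsilon})_\Omega$, then use the extension estimates of Lemma \ref{lemma_extension} together with the fact that $(\widetilde{u_\epsilon})_\Omega$ and $(u_\epsilon)_{\Omega_\epsilon}=0$ differ by an amount controlled by $\|\nabla\widetilde{u_\epsilon}\|_{L^2(\Omega)}$ (via $|\Omega\setminus\Omega_\epsilon|\to0$ and again Poincaré on $\Omega$). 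This route is shortest: $\|u_\epsilon\|_{L^2(\Omega_\epsilon)}\le\|\widetilde{u_\epsilon}\|_{L^2(\Omega)}\le\|\widetilde{u_\epsilon}-(\widetilde{u_\epsilon})_\Omega\|_{L^2(\Omega)}+|\Omega|^{1/2}|(\widetilde{u_\epsilon})_\Omega|\le C_\Omega\|\nabla\widetilde{u_\epsilon}\|_{L^2(\Omega)}+|\Omega|^{1/2}|(\widetilde{u_\epsilon})_\Omega|$, and one estimates $|(\widetilde{u_\epsilon})_\Omega|$ using $\int_{\Omega_\epsilon}u_\epsilon=0$ so that $|\Omega|\,(\widetilde{u_\epsilon})_\Omega=\int_{\Omega\setminus\Omega_\epsilon}\widetilde{u_\epsilon}\,dx$, which after subtracting the mean is bounded by $C\|\nabla\widetilde{u_\epsilon}\|_{L^2(\Omega)}\le C'\|\nabla u_\epsilon\|_{L^2(\Omega_\epsilon)}$; combining with Lemma \ref{lemma_extension} finishes the proof.

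The main obstacle is ensuring the constant genuinely does not depend on $\epsilon$: in the scaling argument the $\epsilon$-powers must cancel exactly (they do, because the Poincaré constant scales linearly in the domain diameter while $L^2$ norms and the gradient each pick up compensating powers of $\epsilon$), and in the extension route one must be careful that the difference between the two averages $(\widetilde{u_\epsilon})_\Omega$ and $0$ is controlled uniformly — this uses that $\widetilde{u_\epsilon}$ has $\epsilon$-uniform $H^1(\Omega)$ bounds from Lemma \ref{lemma_extension}, and that the region $\Omega\setminus\Omega_\epsilon$ where $\widetilde{u_\epsilon}$ need not coincide with $u_\epsilon$ has vanishing measure, so no additional $\epsilon$-blow-up enters. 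Handling the incomplete cells near $\partial\Omega$ (indices $k$ with $\epsilon Y_k\not\subset\Omega$) is routine since their total contribution lies in the thin boundary layer absorbed into $\Omega\setminus\Omega_\epsilon$.
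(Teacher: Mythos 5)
Your second (``shortest'') route is essentially the paper's proof: extend to $\Omega$ via Lemma \ref{lemma_extension}, apply the Poincar\'e--Wirtinger inequality on the fixed domain $\Omega$, and exploit the zero-mean condition on $\Omega_\epsilon$ through the identity $\int_\Omega\widetilde{u}_\epsilon=\int_{\Omega\setminus\Omega_\epsilon}\widetilde{u}_\epsilon$ together with the uniform bound $|\Omega\setminus\Omega_\epsilon|^{1/2}/|\Omega|^{1/2}\leq C_1<1$. The paper closes by absorbing $\lVert\widetilde{u}_\epsilon\rVert_{L^2(\Omega)}$ on both sides, whereas you prefer to solve for $(\widetilde{u}_\epsilon)_\Omega$ after subtracting the mean under the integral --- both variants rest on the same geometric input $|\Omega_\epsilon|\geq c>0$ uniformly in $\epsilon$ and yield the same conclusion, so the cell-by-cell alternative you sketch first is not needed.
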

\begin{proof} The proof is similar to the proof of \cite[Lemma 1.3]{Mik00}. Since it is rather short,  we include it here for the sake of completeness. Let $\widetilde{u}_\epsilon$ be the extension of $u_\epsilon$ to $\Omega$ given in Lemma \ref{lemma_extension}. Now, we use the Poincaré–Wirtinger inequality and the fact that due to the zero mean value of $u_\epsilon$ on $\Omega_\epsilon$ we have 
$
 \int_{\Omega} \widetilde{u}_\epsilon =  \int_{\Omega \setminus \Omega_\epsilon} \widetilde{u}_\epsilon.
$
We obtain
 \begin{eqnarray*}
 \lVert \widetilde{u}_\epsilon \rVert _{L^2(\Omega)} &\leq& \left \lVert \widetilde{u}_\epsilon  - \frac{1}{|\Omega|} \int_{\Omega} \widetilde{u}_\epsilon  \right \rVert_{L^2(\Omega)}+ \frac{1}{|\Omega|} \left | \int_{\Omega} \widetilde{u}_\epsilon  \right | | \Omega|^{\frac{1}{2}}\\
 &  \leq &C (\Omega) \lVert \nabla \widetilde{u}_\epsilon \rVert_{L^2(\Omega)}+ \frac{1}{|\Omega|^{\frac{1}{2}}} \int_{\Omega\setminus \Omega_\epsilon} |\widetilde{u}_\epsilon | \,dx \\
 &  \leq&  C(\Omega) \lVert \nabla \widetilde{u}_\epsilon  \rVert_{L^2(\Omega)} + \frac{|\Omega \setminus \Omega_\epsilon|^{\frac{1}{2}}}{|\Omega|^{\frac{1}{2}}} \lVert \widetilde{u}_\epsilon  \rVert_{L^2(\Omega \setminus \Omega_\epsilon)} \\
 & \leq & C(\Omega) \lVert \nabla \widetilde{u}_\epsilon  \rVert_{L^2(\Omega)} + \frac{|\Omega \setminus \Omega_\epsilon|^{\frac{1}{2}}}{|\Omega|^{\frac{1}{2}}} \lVert \widetilde{u}_\epsilon  \rVert_{L^2(\Omega)}.
 \end{eqnarray*}
Since $\frac{|\Omega \setminus \Omega_\epsilon|^{\frac{1}{2}}}{|\Omega|^{\frac{1}{2}}} < C_1 <1$, with $C_1$ is independent of $\epsilon$, we have
\begin{eqnarray*}
 (1-C_1) \lVert \widetilde{u}_\epsilon  \rVert _{L^2(\Omega)} \leq C(\Omega) \lVert \nabla \widetilde{u}_\epsilon  \rVert_{L^2(\Omega)}.
\end{eqnarray*}
Now using the properties (\ref{lemm_extension}) of the extension $\widetilde{u}_\epsilon$, we conclude the proof.
\end{proof}

The following Lemma gives a trace-estimate with an explicit dependence of $\epsilon$ and is well-known in the theory of homogenization, so we skip the proof, which is based on a standard decomposition argument and the trace-inequality in the reference element.
\begin{lemma}\label{TraceInequalityLemma}
For every $u_{\epsilon} \in W^{1,q}(\Omega_{\epsilon})$ with $q \in [1,\infty)$ it holds that
\begin{align*}
\epsilon\Vert u_{\epsilon} \Vert_{L^q(\Gamma_{\epsilon})}^q  \le  C\left( \Vert u_{\epsilon}\Vert_{L^q(\Omega_{\epsilon})}^q + \epsilon^q  \Vert \nabla u_{\epsilon} \Vert^q_{L^q(\Omega_{\epsilon})}\right),
\end{align*}
for a constant $C >  0$ independent of $\epsilon$.
\end{lemma}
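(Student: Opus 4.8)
\textbf{Proof proposal for Lemma \ref{TraceInequalityLemma}.}
The plan is the standard decomposition-and-rescaling argument: reduce the scaled trace inequality on $\Omega_\epsilon$ to the (fixed) trace inequality on the reference pore $Y^f$, applied cell by cell, and then resum.

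First I would record the trace inequality on the reference cell. Since $\Gamma=\partial Y^s$ is of class $C^3$ (Lipschitz regularity already suffices), the trace operator $W^{1,q}(Y^f)\to L^q(\Gamma)$ is bounded, so there is a constant $C_0=C_0(Y^f,q)>0$, independent of $\epsilon$, with
\[
\|v\|_{L^q(\Gamma)}^q \le C_0\left(\|v\|_{L^q(Y^f)}^q+\|\nabla v\|_{L^q(Y^f)}^q\right)\qquad\text{for all } v\in W^{1,q}(Y^f);
\]
one may prove this first for $v\in C^\infty(\overline{Y^f})$ and pass to $W^{1,q}(Y^f)$ by density, which is routine. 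Next, for each $k\in I_\epsilon$ I would pull $u_\epsilon$ back from the cell $\epsilon Y_k^f$ to $Y^f$ by $v_k(y):=u_\epsilon(\epsilon(y+k))$, so that $v_k\in W^{1,q}(Y^f)$ and $\nabla_y v_k(y)=\epsilon(\nabla u_\epsilon)(\epsilon(y+k))$. The change of variables $x=\epsilon(y+k)$ gives $dx=\epsilon^n\,dy$ on $\epsilon Y_k^f$ and $dS(x)=\epsilon^{n-1}\,dS(y)$ on $\epsilon\Gamma_k$, hence
\[
\int_{\epsilon\Gamma_k}|u_\epsilon|^q\,dS(x)=\epsilon^{n-1}\!\int_\Gamma|v_k|^q\,dS(y),\quad
\int_{\epsilon Y_k^f}|u_\epsilon|^q\,dx=\epsilon^{n}\!\int_{Y^f}|v_k|^q\,dy,\quad
\int_{\epsilon Y_k^f}|\nabla u_\epsilon|^q\,dx=\epsilon^{n-q}\!\int_{Y^f}|\nabla_y v_k|^q\,dy.
\]

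Inserting these identities into the reference inequality for $v_k$, multiplying through by $\epsilon^{n-1}$ and then by $\epsilon$, I obtain for each $k\in I_\epsilon$
\[
\epsilon\int_{\epsilon\Gamma_k}|u_\epsilon|^q\,dS(x)\le C_0\left(\int_{\epsilon Y_k^f}|u_\epsilon|^q\,dx+\epsilon^q\int_{\epsilon Y_k^f}|\nabla u_\epsilon|^q\,dx\right).
\]
Finally I would sum over $k\in I_\epsilon$: by definition $\Gamma_\epsilon=\bigcup_{k\in I_\epsilon}\epsilon\Gamma_k$ (the overlaps being $\mathcal H^{n-1}$-null), so the left-hand sides add up exactly to $\epsilon\|u_\epsilon\|_{L^q(\Gamma_\epsilon)}^q$, while the cells $\{\epsilon Y_k^f\}_{k\in I_\epsilon}$ are pairwise disjoint subsets of $\Omega_\epsilon$, so the right-hand sides are bounded by $C_0(\|u_\epsilon\|_{L^q(\Omega_\epsilon)}^q+\epsilon^q\|\nabla u_\epsilon\|_{L^q(\Omega_\epsilon)}^q)$. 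This is the asserted estimate with $C=C_0$.

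There is no genuine obstacle here; the only points that need a little care are (a) verifying that the constant $C_0$ is truly independent of $\epsilon$ and of $k$ — which holds because it comes solely from the fixed geometry of $Y^f$ — and (b) keeping the powers of $\epsilon$ straight in the change of variables: the gradient term picks up the extra factor $\epsilon^{-q}$, which is precisely what produces the weight $\epsilon^q$ in front of $\|\nabla u_\epsilon\|_{L^q(\Omega_\epsilon)}^q$ after multiplying back by $\epsilon$.
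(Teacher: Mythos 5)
Your proof is correct, and it is exactly the standard decomposition-and-rescaling argument the paper alludes to in its one-line justification (reference trace inequality on $Y^f$, pullback by $v_k(y)=u_\epsilon(\epsilon(y+k))$, power counting in $\epsilon$, and summation over the disjoint cells). The bookkeeping of the $\epsilon$-powers is accurate, so there is nothing to add.
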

\end{appendix}


\section*{Acknowledgements}
AB acknowledges the support by the RTG 2339 ``Interfaces, Complex Structures, and Singular Limits'' of the German Science Foundation (DFG). MG was supported by the SCIDATOS project, funded by the Klaus Tschira Foundation (grant 00.0277.2015). The authors thank the anonymous referees for valuable comments and suggestions which improved the manuscript. In particular, Remark \ref{remark_generalizations} was added to the manuscript to discuss further extensions of the model.


\end{document}